\documentclass[11pt,letterpaper]{amsart}
\usepackage{graphicx} % Required for inserting images
\usepackage[margin=30truemm]{geometry}

\usepackage{amsmath,amssymb,amsthm,amscd,enumitem,url,hyperref,mleftright,bm,calc}

\numberwithin{equation}{section}

\newcommand{\bigast}{
  \mathop{
    \vphantom{\oplus} 
    \mathchoice
      {\vcenter{\hbox{\resizebox{\widthof{$\displaystyle\oplus$}}{!}{$\ast$}}}}
      {\vcenter{\hbox{\resizebox{\widthof{$\oplus$}}{!}{$\ast$}}}}
      {\vcenter{\hbox{\resizebox{\widthof{$\scriptstyle\oplus$}}{!}{$\ast$}}}}
      {\vcenter{\hbox{\resizebox{\widthof{$\scriptscriptstyle\oplus$}}{!}{$\ast$}}}}
  }\displaylimits 
}

\theoremstyle{plain}
\newtheorem{thm}{Theorem}[section]
\newtheorem{cor}[thm]{Corollary}

\newtheorem{prop}[thm]{Proposition}
\newtheorem{lem}[thm]{Lemma}

\theoremstyle{definition}
\newtheorem{dfn}[thm]{Definition}
\newtheorem{nta}[thm]{Notation}

\theoremstyle{remark}
\newtheorem{rmk}[thm]{Remark}
\newtheorem{exm}[thm]{Example}

\title{Tracks on planar complexes and soficity}
\author{Hiroki Ishikura}
\date{}

\begin{document}

\begin{abstract}
We show that every probability-measure-preserving equivalence relation generated by a locally-finite Borel graph with planar connected components is sofic in the sense of Elek--Lippner. In particular, every unimodular random planar graph is sofic. This removes the additional assumptions in the works of Angel--Hutchcroft--Nachmias--Ray and Tim\'{a}r on the soficity of unimodular random planar maps and graphs. To prove this, we investigate Borel simplicial complexes with planar components and approximate them by treeable covering spaces. To construct these coverings, we use a canonical family of tracks on planar simplicial complexes introduced by Dunwoody.
\end{abstract}

\maketitle

\section{Introduction}\label{sec1}

Soficity of countable groups was originally introduced by Gromov \cite{G} and Weiss \cite{W}.
This notion was later generalized by Aldous and Lyons \cite{AL} to unimodular random graphs, which can be viewed as a generalization of Cayley graphs of finitely generated groups. That is, a unimodular random rooted graph $(\mathbb{G},o)$ is said to be \textit{sofic} if it is a Benjamini--Schramm limit of finite graphs with uniform roots. The Aldous--Lyons conjecture, which states that every unimodular random graph is sofic, was recently refuted by \cite{BCLV,BCV}. However, we show that all unimodular random graphs satisfying a combinatorial condition, namely planarity, are indeed sofic.

\begin{thm}\label{thm1}
Every unimodular random planar graph is sofic.
\end{thm}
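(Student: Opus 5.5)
The plan is to reduce Theorem~\ref{thm1} to a statement about measured equivalence relations and then build treeable approximations. A unimodular random rooted graph $(\mathbb{G},o)$ with $\mathbb{G}$ planar a.s.\ can be realized (after decorating vertices with i.i.d.\ labels to kill symmetries) as the graphing of a probability-measure-preserving countable Borel equivalence relation $R$ on a standard probability space $(X,\mu)$. The Borel graph $\mathcal{G}$ is locally finite and has planar components. By the Elek--Lippner framework, Benjamini--Schramm soficity of $(\mathbb{G},o)$ follows once $R$ is sofic. We may assume bounded degree or finite expected degree as needed by truncation, since soficity passes to limits. Hence it suffices to prove the stronger statement in the abstract: every p.m.p.\ equivalence relation generated by a locally finite Borel graph with planar components is sofic.

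Next I would pass from graphs to $2$-dimensional complexes. I would Borel-measurably choose, on each component, a planar embedding and fill in faces to obtain a Borel simplicial $2$-complex $K$ with planar, simply connected components, for instance by triangulating. Choosing an embedding measurably is delicate. A workaround is to add only the cycles that bound faces in some embedding, using the canonical structure via $3$-connected pieces, which is unique by Whitney. Then $R$ is the orbit equivalence relation of the $0$-skeleton of $K$.

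The core step uses Dunwoody's canonical family of tracks on planar complexes. These are patterns of disjoint tracks separating the ends and cutting the complex along a $\mathrm{Aut}$-invariant, hence Borel, family. Cutting $K$ along the tracks yields a tree of pieces, and the pieces are "smaller". The aim is to produce, for each $\varepsilon>0$, a covering space $\widetilde{K}_\varepsilon\to K$ whose associated relation is treeable and which agrees with $K$ on balls of radius $r$ around all but an $\varepsilon$-fraction of points. Concretely, I would choose a Borel subfamily of tracks of small measure density. Cutting along them gives a treeable structure, since the dual graph of tracks is a tree. I would then unfold the complex along these cuts to get a cover whose fundamental-group-level deck relations are free, using planarity so that the cover is again locally like $K$ away from the cuts.

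Finally, treeable p.m.p.\ relations are sofic, because treeings are hyperfinite-by-free and free products of sofic relations amalgamated over amenable ones are sofic (Elek--Lippner and known results). The local statistics of $\widetilde{K}_\varepsilon$ converge to those of $K$ as $\varepsilon\to 0$, and soficity is closed under such local limits, so $R$ is sofic. The main obstacle I expect is the covering step. One must select tracks in a Borel, measure-controlled way so that the cut set has density going to zero while the components still become treeable. Dunwoody's theorem gives canonicity, but not quantitative density. I would first try to obtain density control via a cost or mass-transport argument showing that tracks of bounded length occur with summable density, then take the union over large lengths.
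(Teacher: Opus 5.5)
There is a genuine gap. Your outer frame matches the paper: realize the graph as a graphing, pass to a planar $2$-complex, approximate by treeable coverings that are locally isomorphic to it, and finish with Elek--Lippner. But the two steps that carry the weight fail as stated.

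\textbf{The simply connected complex does not exist in general.} Filling faces does not give simply connected components. For $C_4\times\mathbb{Z}$ every face is a square, and filling them yields an open annulus. A face bounded by a bi-infinite walk has no finite cell to glue in at all. The step is also self-defeating. A planar $K$ with $\mathrm{H}_1(K,\mathbb{Z}_2)=0$ would already make $\mathcal{R}$ measure-treeable by Theorem~\ref{fact1}, which the paper notes is open for inaccessible components. And a simply connected $K$ has no nontrivial coverings for your third step to use. The paper instead reduces to $3$-connected pieces and passes to a $2$-to-$1$ cover carrying a Borel rotation system. It then glues annuli and bands along the face boundaries (Lemma~\ref{lem12}). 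The result is a planar surface with boundary, homotopy equivalent to the graph: tracks are simple closed curves on it, and its fundamental group is that of the graph.

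\textbf{The mechanism that produces treeability is missing.} Cutting along a nested family of tracks gives a tree decomposition. By Theorem~\ref{thmJ}, this only reduces treeability of $\mathcal{R}$ to treeability of the pieces. Nothing in your plan makes the pieces treeable, and thinning the family only makes them bigger. Also, since every track separates, cut-and-paste along tracks produces only trivial covers.

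What you need is the following. Take \emph{all} irreducible tracks of size at most $n$, which are pairwise nested by Theorem~\ref{thmC}. Kill them by gluing disks to get $\Omega_n$, and let $\Sigma_n$ be the normal covering corresponding to their normal closure in $\pi_1$. Then $\Sigma_n$ is planar and $\mathrm{H}_1(\Sigma_n,\mathbb{Z}_2)$ is generated by cycles of length at most $2n$ (Proposition~\ref{prop1}). A second track decomposition, now of $\Sigma_n$, produces pieces with trivial $\mathrm{H}_1$ after filling, to which the Conley--Gaboriau--Marks--Tucker-Drob theorem applies (Theorem~\ref{fact2}).

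Convergence needs no density estimate. Proposition~\ref{lem3}, proved from irreducibility by induction on the crossing number $\chi$, shows that every closed path of length at most $k$ is null-homotopic in $\Omega_n$ once $n$ is large. So such paths lift to closed paths from \emph{every} point of $\Sigma_n$, traces of lifted words agree exactly, and Lemma~\ref{lem1} gives soficity. The approximation improves by adding tracks, not by thinning them. The density control you flag as the main obstacle is therefore not the real issue.
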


Here, a locally-finite connected graph is said to be planar if it admits a topological embedding into the Euclidean plane. This property is equivalent to admitting an embedding into the $2$-sphere. By Kuratowski's theorem, it is also equivalent to not containing either the complete graph $K_5$ or the complete bipartite graph $K_{3,3}$ as a minor. The above theorem removes the assumption on the number of ends in \cite[Theorem 1]{T}.

It is known that every unimodular random graph $(\mathbb{G},o)$ is realized as a graphing, or a measure-preserving locally-finite Borel graph $G$ on a standard probability space $(X,\mu)$. That is, $(\mathbb{G},o)$ coincides with $(G_x,x)$ as a random variable, where $x\in X$ is a $\mu$-random point and $G_x$ is the connected component of $G$ containing $x$. If the equivalence relation $\mathcal{R}_G$ on $(X,\mu)$ generated by $G$ is \textit{$\mu$-sofic} in the sense of \cite{EL}, then the unimodular random graph $(\mathbb{G},o)$ is also sofic. Therefore, Theorem \ref{thm1} is a direct consequence of the following:

\begin{thm}[Corollary \ref{cor3}]\label{thm3}
Let $G$ be a probability-measure-preserving locally-finite planar graph on $(X,\mu)$. Then the equivalence relation $\mathcal{R}_G$ on $(X,\mu)$ generated by $G$ is $\mu$-sofic.
\end{thm}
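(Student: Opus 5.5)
The plan is to realize $\mathcal{R}_G$ as a limit of treeable equivalence relations, and then use that soficity passes to such limits. Three facts will be used. First, treeable pmp equivalence relations are $\mu$-sofic (Elek--Lippner). Second, if $\mathcal{R}$ is a measure-preserving factor of a sofic relation in the appropriate sense, it is sofic; in particular this holds for the relation on a finite-index or countable-to-one extension that projects onto $\mathcal{R}$. Third, $\mu$-soficity is closed under the kind of approximation where, for every finite radius $r$ and $\varepsilon>0$, outside a set of measure $<\varepsilon$ the $r$-ball of $G_x$ is isomorphic to the $r$-ball of some sofic graphing. Concretely, the goal is to build, for each $r$ and $\varepsilon$, a Borel covering $\widetilde G\to G$ such that:
\begin{enumerate}[label=(\roman*)]
\item $\widetilde G$ is a graphing on a probability space that projects measure-preservingly onto $(X,\mu)$;
\item the relation generated by $\widetilde G$ is treeable;
\item the covering map is injective on $r$-balls around all points outside a set of measure $<\varepsilon$.
\end{enumerate}
Soficity of the Benjamini--Schramm type statistics, and then of $\mathcal{R}_G$ via the Elek--Lippner characterization through approximating the graphing's local statistics together with a Borel labeling, then follows.

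To construct the covering, first thicken $G$ into a Borel simplicial $2$-complex $K$ with planar components. Fix a Borel choice of embedding data, for instance a planar rotation system, which exists Borel-ly up to measure zero after possibly passing to a locally finite refinement. Then attach $2$-cells along the facial cycles, so that each component of $K$ is simply connected, or at least so that its fundamental group is generated by finite faces. Infinite faces are the nuisance: they would carry the ends. Next use Dunwoody's canonical tracks on planar simplicial complexes. These tracks form an $\mathrm{Aut}$-invariant, nested family separating the complex, and invariance is exactly what makes them Borel-definable on $X$. Cutting $K$ along the tracks of "size" at most $n$ yields pieces. The dual graph of tracks and pieces is a tree, by nestedness, and is locally finite after truncation. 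Building the corresponding cover, where we unfold along this tree and glue copies of pieces, gives a treeable structure: the relation is generated by a tree of finite pieces, hence treeable by a standard argument that a graphing with finite components amalgamated along a tree is treeable. As $n\to\infty$, a random $r$-ball in $G$ meets no cut track with high probability. This uses unimodularity, via the mass transport principle bounding the density of points near short tracks, which gives (iii).

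I expect the main obstacle to be step (iii) combined with canonicity. Dunwoody's tracks are canonical but may accumulate, and the tracks needed to make pieces finite may be long. So we need a mass-transport argument showing that the measure of the set of points within distance $r$ of a retained track tends to $0$, while the pieces still give a treeable (not merely hyperfinite) relation. First I would try retaining all canonical tracks of length $\le n$ and arguing that the pieces have either finite or one-ended planar structure, and then treating one-ended planar pieces separately. Those pieces are amenable or are handled by Timár's theorem, since one-ended unimodular planar graphs are sofic, so they can be absorbed into the amalgamation rather than requiring treeability.
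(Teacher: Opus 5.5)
Your architecture matches the paper's Theorem \ref{thm2} and Lemma \ref{lem1}: approximate by treeable coverings that become locally isomorphic to $G$, then pass soficity to the limit. But the two steps that carry the weight fail as you describe them.

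First, you never produce a nontrivial cover. Tracks are minimal cuts, so they separate. Cutting $K$ along a nested family of them and regluing copies of the pieces along the dual tree therefore just returns $K$. As written, (ii) would assert that $\mathcal{R}_G$ itself is treeable, which is exactly the open problem mentioned after Theorem \ref{thmJ1}. The pieces are also not finite. Your fallback (``pieces are finite or one-ended'') is precisely what breaks for inaccessible components: a piece bounded by tracks of size $\le n$ can still have many ends, separated only by longer tracks.

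The cover that works is the one whose fundamental group is the normal closure of the short irreducible tracks. In the barycentric subdivision, attach a disk along $\tau(C)$ for each irreducible track $C$ with $|C|\le n$. Take the universal cover of the resulting $\Omega_n$, made Borel through a principal extension of the fundamental groupoid (Example \ref{exm2}), and delete the lifted disks. This cover is planar, and its $\mathrm{H}_1(\cdot,\mathbb{Z}_2)$ is generated by cycles of length $\le 2n$ (Proposition \ref{prop1}). It is treeable because decomposing it along its own short irreducible tracks, capped off by disks, leaves planar pieces with trivial $\mathrm{H}_1$. These pieces are generally infinite, not finite. They are measure-treeable by the Conley--Gaboriau--Marks--Tucker-Drob $2$-basis theorem (Theorem \ref{fact1}) and reassemble via Jard\'on-S\'anchez's tree-decomposition argument (Theorem \ref{fact2}).

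Second, your mechanism for (iii) runs backwards. The retained family, tracks of size $\le n$, grows with $n$. So the measure of points within distance $r$ of a retained track is nondecreasing in $n$ and typically bounded away from $0$; no mass-transport estimate can shrink it. Local approximation really is the crux: the universal covering tree of $G$ is already a treeable cover, so a single treeable cover proves nothing.

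In the paper the local isomorphism is deterministic and holds at every point. Proposition \ref{lem3} shows that every closed dual path of length $\le n$ is null-homotopic in $\Omega_n$. Hence for large $n$ all short loops lift to loops, and traces of lifted words agree exactly. That proposition is the missing key lemma, and it uses irreducibility essentially. An irreducible track is not a sum of two strictly shorter cuts (Remark \ref{rmk2}), so a loop crossing a leaf track can be rerouted along a no-longer arc of that track, strictly lowering the crossing count $\chi$. Crossing-free loops are then handled by planarity of the pieces (Lemma \ref{lem7}).

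Finally, a Borel rotation system need not exist even off a null set, since orientations of $3$-connected components cannot in general be chosen measurably. Non-$3$-connected graphs have no canonical embedding at all. The paper handles this in three steps:
\begin{itemize}
\item it reduces to $3$-connected pieces via block and Tutte decompositions (Corollary \ref{cor2});
\item it passes to the $2$-to-$1$ orientation cover (Lemma \ref{lem11});
\item it attaches collars along face boundaries rather than filling faces (Lemma \ref{lem12}), giving a surface without singular vertices that is homotopy equivalent to $G$.
\end{itemize}
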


Here we say that a Borel graph is planar if every connected component is planar.

In \cite{CGMT}, the term ``Borel planar graph'' is used in a much stronger sense than here. For example, in their definition, every component of the Borel graph must have a $2$-basis, which is not true for general planar graphs. Therefore, their work does not directly imply the above theorem to the best of our understanding.

\subsection{On treeability}
Throughout this article, all Borel graphs are assumed to be locally finite. Fix a standard probability space $(X,\mu)$. The term \textit{pmp} means probability-measure-preserving.

Unimodular random \textit{trees} are known to be sofic \cite{B,E,BLS}. This is generalized by \cite[Theorem 4]{EL}, which states that every pmp $\mu$-treeable equivalence relation on $(X,\mu)$ is $\mu$-sofic. Here, a countable Borel equivalence relation $\mathcal{R}$ on $(X,\mu)$ is $\mu$-\textit{treeable} if there exist a $\mu$-conull subset $X'\subset X$ and an acyclic Borel graph $T$ on $X'$ such that $\mathcal{R}|_{X'}=\mathcal{R}_T$.

A unimodular random graph $(\mathbb{G},o)$ is said to be \textit{treeable} if it is realized as a pmp graph $G$ on $(X,\mu)$ such that $\mathcal{R}_G$ is $\mu$-treeable. This is equivalent to the existence of a ``rewiring tree" for the random graph $\mathbb{G}$, i.e., a random tree $\mathbb{T}$ on the vertex set of $\mathbb{G}$ satisfying a certain invariance.
This condition implies the soficity of $(\mathbb{G},o)$.
Tim\'{a}r actually proves the treeability of unimodular random one-ended planar graphs in \cite{T}. This work relies on the property of unimodular random planar maps investigated by Angel--Hutchcroft--Nachmias--Ray \cite{AHNR}. Since they use the free uniform spanning forest to construct a rewiring tree, they need to add extra randomness to the given unimodular random graph. Therefore, it has been unknown whether the relation $\mathcal{R}_G$ is $\mu$-treeable for every pmp one-ended planar graph $G$ on $(X,\mu)$.

The positive answer to the above question is given by Conley--Gaboriau--Marks--Tucker-Drob \cite{CGMT} and Jard\'{o}n-S\'{a}nchez \cite{J} (even when the Borel graph is not pmp). More generally:

\begin{thm}[{\cite[Theorem 5.2]{J}}] \label{thmJ1}
If $G$ is a planar Borel graph on $(X,\mu)$ such that every component is accessible, then $\mathcal{R}_G$ is $\mu$-treeable.
\end{thm}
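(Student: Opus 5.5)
The plan is to realize each component, Borel-uniformly, as a tree of finite-ended pieces, and then to treeify each piece. I would not work with the graph alone but with a planar $2$-complex built from it. First I would Borel-choose, on the $\mu$-conull set where this is possible, a planar embedding of each component up to orientation; this uses the combinatorial characterisation of embeddings via rotation systems, together with the fact that the set of valid rotation systems of a locally finite connected planar graph is compact and nonempty. Selecting one is where measure is needed. Then I would fill in faces, after triangulating, to obtain a locally finite Borel simplicial complex $K$ with simply connected planar components. Choosing the embedding in a Borel way is delicate, because $3$-connected pieces are rigid but lower connectivity allows flips. I would therefore first pass to a Borel block and $2$-separator decomposition (Tutte style), which is canonical and hence Borel.

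Second, accessibility means that the ends are split by finitely many Dunwoody tracks up to the action of component automorphisms. I would use Dunwoody's theory: in each component the family of \emph{thin} tracks (minimal-weight tracks separating ends) can be chosen canonically and pairwise disjoint. I would take the canonical nested family, which is invariant under isomorphism and hence Borel. Cutting along these tracks gives a Borel tree-decomposition $\mathcal{T}$ whose edges are finite tracks, i.e.\ finite separators. Accessibility of the component ensures that every piece of $\mathcal{T}$ has at most one end, or is finite.

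Third, I would assemble the treeing from the pieces:
\begin{enumerate}
\item Each piece is a one-ended or finite planar graph. Borel one-ended planar graphs are treeable by Conley--Gaboriau--Marks--Tucker-Drob; the finite pieces are trivially treeable.
\item The structure tree $\mathcal{T}$ is itself a Borel tree, with vertex set a standard Borel space fibred over $X$ with finite fibres near each point.
\item I would glue the treeings of the pieces along the finite separators using the standard fact that a relation built from treeable subrelations amalgamated over finite (hence smooth) subrelations along a tree is treeable, as in Gaboriau's amalgamation argument.
\end{enumerate}
To make this concrete, I would Borel-choose a point of each separator as a connector. I would restrict each piece's treeing to the relation on its own vertices, discarding vertices duplicated in separators by assigning each vertex to a single piece. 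Finally I would add one edge per tree edge of $\mathcal{T}$ joining the connectors. The result is acyclic because $\mathcal{T}$ is a tree and each piece is a tree, and it generates $\mathcal{R}_G$ on a conull set.

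The main obstacle I expect is the second step, namely making Dunwoody's canonical track family Borel and ensuring that accessibility forces the pieces to be one-ended. Canonicity gives invariance, but infinitely many thin tracks may cross, and nestedness is only guaranteed for a carefully chosen subfamily. I would first try the minimal-weight cut argument: among tracks of minimal weight separating a given pair of ends, the set is locally finite and admits a canonical nested subfamily via the Dunwoody--Krön cut-system lemma. I would then iterate over weights $1,2,\dots$. Accessibility guarantees that this iteration terminates after finitely many weights within each component, which yields the required Borel decomposition.
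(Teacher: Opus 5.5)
The paper does not reprove this statement. It quotes it from Jard\'{o}n-S\'{a}nchez and only records that the argument uses a Borel tree decomposition into pieces with one-ended components, then CGMT on the pieces, then the gluing $\mathcal{P}_T=\mathcal{Q}_T\ast\mathcal{T}$ of Theorem \ref{thmJ}. Your outline has exactly this shape, and it also parallels the paper's Theorem \ref{fact2}.

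\textbf{The gap.} The step that is supposed to make the pieces one-ended fails as you set it up. You transfer accessibility of $G$ to tracks on the filled complex $K$, implicitly via Dunwoody's principle that on a simply connected complex small edge cuts are small tracks. But $K$ is not simply connected in general. Take $G=\mathbb{Z}\times C_3$, which is two-ended and accessible. Every face is a quadrilateral, so filling them gives an open annulus. Each level triangle has infinitely many $2$-cells on both sides, so it bounds no finite $2$-chain. (If filling faces always gave $\mathrm{H}_1(K,\mathbb{Z}_2)=0$, Theorem \ref{fact1} would make every planar Borel graph measure-treeable, which is the case the paper calls open.)

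Without simple connectivity the transfer genuinely breaks:
\begin{itemize}
\item A track is a cut of $K^1$, so when it crosses a triangulated face of length $m$ between two far-apart boundary edges, it pays for every interior edge it crosses. For the isomorphism-invariant coned triangulation this is roughly the distance along the face, and faces of a locally finite planar graph have unbounded length.
\item An infinite face filled by a locally finite half-plane cannot be crossed at bounded cost at all.
\item If a facial walk has its two tails on opposite sides of a cut, no track realizes that cut, and the corresponding ends are identified in $K$.
\end{itemize}
So ``ends of $G$ are separated by at most $n$ edges'' does not yield ``ends of $K$ are separated by tracks of bounded size''. For example, hang infinitely many one-ended blobs on $3$-edge bridges whose adjacent faces have lengths tending to infinity. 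This $G$ is accessible, but its coned $K$ is not, so any bounded-size nested track family on $K$ leaves multi-ended pieces, where CGMT does not apply.

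\textbf{A repair.} Do the cut theory on $G$ itself. Apply Chen's Theorem \ref{thmC} to the algebra $\mathrm{Cut}(G_0)$ of each component $G_0$, with $n$ its accessibility constant. The irreducible cuts of size at most $n$ are canonical, Borel (Chen, Section 4), pairwise nested, and generate every cut of size at most $n$. This also removes the obstacle you flagged, with no weight-by-weight Dunwoody--Kr\"{o}n iteration. Two ends separated by a cut of size at most $n$ are separated by one of its irreducible summands, so every bag is finite or one-ended. Planarity is then needed only to check that each bag, with its adhesion sets joined along the dual curves of the cuts, is a connected planar graph, the analogue of Lemma \ref{lem9}.

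\textbf{Smaller points.}
\begin{itemize}
\item Accessibility here means a uniform bound $n$ per component, not finitely many tracks modulo automorphisms; components of a Borel graph typically have trivial automorphism group.
\item Your concrete gluing recipe fails. Restricting a treeing to a subset of vertices does not give a treeing of the restricted relation. A bag can be entirely covered by its adhesions. The set of bags is not a standard Borel space in general.
\item Instead, glue on $Y_T$: there $\mathcal{P}_T=\mathcal{Q}_T\ast\mathcal{T}$, and Theorem \ref{thmJ} gives treeability of $\mathcal{R}_G$ directly.
\end{itemize}
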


Here, a locally-finite connected graph is said to be accessible if there exists a positive integer $n$ such that any two distinct ends can be separated by removing at most $n$ edges. In particular, one-ended graphs are accessible.

It remains an open question whether Theorem \ref{thmJ1} holds without assuming the accessibility of the components. Indeed, the proof of this theorem relies on the existence of a tree decomposition of a Borel graph into Borel graphs with one-ended components. Although it is hard to give a concrete example of a planar Borel graph that does not admit such a decomposition, there is no general way to do this process for Borel graphs with inaccessible components. To deal with this issue, we prove Theorem \ref{thm3} by approximating any planar Borel graph by its ``covering spaces" for which treeability can be established.

\subsection{Proof strategy}
In practice, we will mainly work with Borel complexes rather than Borel graphs.

A \textit{Borel complex} on $X$ is a locally finite simplicial complex such that the set of vertices is $X$ and the set of simplices is a Borel subset of the set of finite subsets of $X$. Then let $\mathcal{R}_{\Sigma}$ denote the connected relation of $\Sigma$ on $X$. We say that $\Sigma$ is a \textit{pmp complex} if $\mathcal{R}_{\Sigma}$ is pmp on $(X,\mu)$.

\begin{dfn}
A \textit{planar Borel complex} is a Borel complex such that every component admits a combinatorial embedding into the $2$-dimensional sphere. If moreover every component is homeomorphic to a planar surface with boundary, we say that the planar Borel complex has no \textit{singular vertices}.
\end{dfn}

We reduce the proof of Theorem \ref{thm3} to showing that every pmp planar complex without singular vertices is sofic.
Now the following is our main technical result. See Section \ref{sec2} for the definition of terminology.

\begin{thm}[Theorem \ref{thm2}]\label{thm2'}
Let $(X,\mu,\Sigma)$ be a pmp bounded-degree planar complex without singular vertices. Then there exist a sequence of pmp planar complexes $(X_n,\mu_n,\Sigma_n)$ such that $\mathcal{R}_{\Sigma_n}$ is $\mu_n$-treeable for every $n$, and a sequence of covering factor maps $\pi_n:(X_n,\mu_n,\Sigma_n)\to (X,\mu,\Sigma)$ which is an asymptotic extension. In particular, $\mathcal{R}_{\Sigma}$ is $\mu$-sofic.
\end{thm}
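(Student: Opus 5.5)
The plan is to cut $\Sigma$ along a Borel family of Dunwoody tracks and pass to covering spaces in which those cuts become tree-like, so that each cover is $\mu_n$-treeable while still looking like $\Sigma$ locally.

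\emph{Step 1: a Borel family of tracks.} Since each component of $\Sigma$ is a planar surface with boundary and $\Sigma$ has bounded degree, the component is a planar simplicial complex. Dunwoody's canonical construction then gives a nested family of tracks: finite, $2$-sided, pairwise disjoint embedded $1$-dimensional patterns, each of which separates the component. These tracks separate ends and approximate the end structure.

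Canonicity (invariance under automorphisms) is what makes the choice uniform. Because the construction depends only on the isomorphism type of a bounded neighborhood, the family can be chosen in a Borel way. I would then fix $n$ and keep only the tracks of length at most $n$. This subfamily $\mathcal{T}_n$ is still Borel, nested, and locally finite.

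\emph{Step 2: the covering $\Sigma_n$.} Cut every component of $\Sigma$ along all tracks in $\mathcal{T}_n$. The pieces are planar surfaces, and each piece is accessible: every end separation inside a piece needs many edges, since all short separating tracks have already been removed. By planarity, a piece should in fact be one-ended or finite, up to controlled cases.

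Next, form the cover associated to the pieces. Glue copies of the pieces along the track sides so that the resulting dual graph (pieces versus tracks) is a tree. Informally this is the universal cover relative to the subgroup generated by loops inside pieces. Realize it as a pmp complex $(X_n,\mu_n,\Sigma_n)$, measured as a fiber product over $X$ with countable fibers. Normalize by uniformly choosing lifts; in fact, because separating tracks in the plane already have a tree-like dual, the cover can be taken to be a degree-one modification.

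Treeability of $\mathcal{R}_{\Sigma_n}$ then follows by combining two ingredients. Theorem \ref{thmJ1} (Jardón-Sánchez), applied to the pieces, treeifies each piece. The tree decomposition along the tracks then glues these trees together: a Borel tree of treeable relations amalgamated over finite (hyperfinite) track relations is treeable. This is the standard free-product-with-amalgamation over finite classes argument.

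\emph{Step 3: the asymptotic extension.} We need the covering maps $\pi_n$ to be injective on $r$-balls for all but an $\varepsilon_n\to0$ measure of points. The failure set is contained in the set of points within distance $r$ of some track of length at most $n$ that is non-trivially unwound. I expect this step to be the main obstacle. Tracks of length at most $n$ must become sparse, or at least their unwinding must become locally invisible, as $n$ grows.

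My first attempt is to reverse the filtration: use tracks of length \emph{at least} $n$ (the long, hence rare, separations) in the cut instead. A mass-transport argument bounds the density of vertices on tracks, using that the tracks are pairwise disjoint and that bounded degree limits the number of tracks through a vertex. One then accepts that the pieces are accessible with separation bound $n$, so Theorem \ref{thmJ1} still applies. The density of points within distance $r$ of a track of length $\ge n$ tends to zero because canonical nested tracks of length $\ge n$ are disjoint and each one carries a mass-transport contribution that vanishes as $n\to\infty$.

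With this, $\pi_n$ is a covering factor map and an asymptotic extension. The final claim, that $\mathcal{R}_\Sigma$ is $\mu$-sofic, follows from the theorem of Elek--Lippner that treeable relations are sofic, together with the fact that soficity passes through asymptotic extensions by covering factor maps.
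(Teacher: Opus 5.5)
\textbf{There is a genuine gap.} Your construction never produces a nontrivial covering, so the argument is really an attempt to prove that $\mathcal{R}_\Sigma$ itself is treeable.

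Every track is by definition a cut, hence separating. So cutting $\Sigma$ along a nested family of tracks and regluing copies of the pieces with a tree-shaped dual graph just returns $\Sigma$; this is your own ``degree-one modification''. The asymptotic-extension part then becomes vacuous. What remains is treeability of planar complexes with possibly inaccessible components, i.e.\ Theorem~\ref{thmJ1} without accessibility. That problem is open, and the paper is designed to circumvent it.

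The step that breaks is accessibility of the pieces.
\begin{itemize}
\item In Step~2 the claim is backwards. After cutting along all canonical tracks of size at most $n$, two ends in the same piece can only be separated by more than $n$ edges, and nothing bounds this uniformly. That is inaccessibility. Picture a binary tree whose depth-$k$ edges are thickened to strips of width $k$.
\item In the reversed version you cut along the canonical tracks of size at least $n$. These have unbounded size, so Lemma~\ref{lem20} no longer makes them a separation system. You give no reason they yield a Borel tree decomposition with finite adhesions, and this is exactly where inaccessibility bites.
\item The mass-transport density estimate is also unsupported, since pairwise-nested tracks need not be edge-disjoint. With a trivial cover it plays no role anyway.
\end{itemize}

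\textbf{The missing idea} is to let the cover genuinely unwind loops, keeping only the short ones, and to get treeability from bounded generation of homology instead of from accessibility.

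\emph{The cover.} The paper attaches $2$-cells to $\Sigma'$ along $\tau(C)$ for the irreducible tracks $C$ of size at most $n$, obtaining $\Omega_n$. It then takes the normal cover of $\Sigma'$ whose fundamental group is the normal closure of these short track loops. Concretely, this is the universal cover of $\Omega_n$ with the lifted $2$-cells removed, realized in a Borel way through a principal extension of the fundamental groupoid $\Pi_1(\Omega_n)$.

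\emph{Treeability.} This cover is planar, and its $\mathrm{H}_1(\cdot,\mathbb{Z}_2)$ is generated by cycles of length at most $2n$ (Proposition~\ref{prop1}). So it is treeable by Theorem~\ref{fact2}. The point is that bounded-size irreducible tracks do form a separation system. The resulting pieces are planar with trivial $\mathrm{H}_1(\cdot,\mathbb{Z}_2)$, and there the $2$-basis criterion (Theorem~\ref{fact1}) applies.

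\emph{Convergence.} This is not a density estimate but the exact combinatorial Proposition~\ref{lem3}: every closed $\widehat{\Sigma}$-path of length at most $n$ is null-homotopic in $\Omega_n$. It is proved by induction on $\chi_{\mathcal{C}_n}$ using irreducibility (Remark~\ref{rmk2}). Combined with Lemma~\ref{lem19}, this gives $n_0(k)$ such that for $n\ge n_0(k)$ every closed path of length at most $k$ lifts to a closed path. Hence traces of products of $k$ lifts agree exactly. Lemma~\ref{lem1} and Proposition~\ref{prop2} then give soficity.
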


To verify the treeability of $\mathcal{R}_{\Sigma_n}$ in this theorem, we will use a variant of Theorem \ref{thmJ1}:

\begin{thm}[Theorem \ref{fact2}] \label{fact2'}
Let $(X,\Sigma)$ be a bounded-degree planar Borel complex without singular vertices.
If $\mathrm{H}_1(\Sigma,\mathbb{Z}_2)$ is boundedly generated, then $\mathcal{R}_\Sigma$ is measure-treeable.
\end{thm}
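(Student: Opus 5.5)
The plan is to reduce Theorem \ref{fact2'} to Theorem \ref{thmJ1}, or to imitate its proof: build a Borel tree decomposition of $\Sigma$ whose pieces are one-ended (or finite) planar complexes, and then tree the pieces. The hypothesis that $\mathrm{H}_1(\Sigma,\mathbb{Z}_2)$ is boundedly generated is a uniform bound: there is $N$ such that in every component $H_1$ is generated by cycles of length at most $N$. I will use it to stand in for accessibility. In a planar surface with boundary and no singular vertices, the ends are separated by finite cuts. Dually, these cuts correspond to cycles in the surface, or to tracks in Dunwoody's sense. Bounded generation of $H_1$ should bound the size of minimal end-separating tracks.

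\textbf{Step 1.} Show that each component of $\Sigma$ is accessible with a uniform constant. Take a component $C$, which is homeomorphic to a planar surface $S$ with boundary. Two ends of $C$ are separated by a simple closed curve (or a proper arc) in $S$. Since the one-skeleton of $C$ has $H_1$ generated by short cycles, the space obtained by attaching 2-cells along all cycles of length at most $N$ is simply connected modulo 2. Dunwoody's argument for accessibility of almost finitely presented groups then applies: ends of the graph are separated by tracks in this 2-complex. A minimal such track meets each 2-cell in a bounded number of arcs, so it is cut out by a bounded number of edges. Bounded degree gives a uniform bound $n=n(N,\deg)$ on the number of edges needed to separate any two ends.

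\textbf{Step 2.} Pass from complexes to graphs. The one-skeleton $G$ of $\Sigma$ is a locally finite Borel graph whose components are planar, because a combinatorial embedding of the complex into $S^2$ restricts to an embedding of the graph. Also $\mathcal{R}_G=\mathcal{R}_\Sigma$. By Step 1 every component of $G$ is accessible, so Theorem \ref{thmJ1} gives that $\mathcal{R}_G$ is $\mu$-treeable. This holds for every Borel probability measure $\mu$, which is the meaning of measure-treeable.

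The main obstacle is Step 1, specifically making the bound on minimal separating cuts uniform and checking that the tracks used are genuinely finite. One must rule out a minimal track between two ends being infinite; in a planar surface with finitely generated local $H_1$ this should follow from properness of the separating curve. If the direct Dunwoody-style argument becomes awkward, the fallback is as follows. Use the canonical family of Dunwoody tracks, which is Borel because it is defined canonically, cut $\Sigma$ along these tracks, and check that each piece has at most one end. Then build the tree decomposition and treeing by hand, following the proof of \cite[Theorem 5.2]{J}.
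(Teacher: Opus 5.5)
\textbf{Step 1 is false, so the reduction to Theorem \ref{thmJ1} fails.} Bounded generation of $\mathrm{H}_1(\Sigma,\mathbb{Z}_2)$ puts no bound on the number of edges needed to separate ends. Here is a counterexample.
\begin{itemize}
\item Build a bounded-degree triangulated surface shaped like a thickened binary tree. Glue triangulated grid strips along the edges of the binary tree, with the strips below a node of depth $k$ having width $k$, and put Y-shaped grid pieces at the branch points.
\item This is a simply connected planar surface with boundary. So it has no singular vertices and $\mathrm{H}_1(\Sigma,\mathbb{Z}_2)=0$.
\item Two ends that branch at depth $k$ are joined by $k$ edge-disjoint double rays running along the strips. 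Hence no set of fewer than $k$ edges separates them, and the component is not accessible.
\end{itemize}
The cuts that separate ends here correspond to proper arcs crossing a strip from one boundary line to the other. Such cuts contain boundary edges, so they are not tracks in $\mathcal{A}_\Sigma$ and are invisible to $\mathrm{H}_1$. In the paper's setting every $\tau(C)$ is a closed curve in the interior, never a proper arc.

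Dunwoody's accessibility argument needs a cocompact group action to bound the number of non-parallel tracks; without one the conclusion fails, as this example shows. Your claim that a minimal track, meeting each $2$-cell in boundedly many arcs, is cut out by boundedly many edges confuses a local bound with a global one: a track may cross arbitrarily many $2$-cells. This is exactly why the paper says that removing accessibility from Theorem \ref{thmJ1} is open, and proves Theorem \ref{fact2} by a different route.

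\textbf{Your fallback has the same defect.} Cutting along canonical bounded-size tracks cannot produce one-ended pieces in general. In the example above every track bounds a finite disk, so the infinite piece keeps all its ends. The missing idea is that the pieces should have trivial $\mathrm{H}_1$, not one end. Write $N$ for the degree bound and $n$ for the length bound on generators of $\mathrm{H}_1$. The paper's argument runs as follows.
\begin{itemize}
\item Pass to the barycentric subdivision $\Sigma'$ and take the irreducible tracks of size at most $Nn$. These are pairwise nested by Theorem \ref{thmC}.
\item Using Lemma \ref{lem19} and Corollary \ref{cor1}, their dual cycles $\tau(C)$ generate $\mathrm{H}_1(\Sigma',\mathbb{Z}_2)$. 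This is where bounded degree is used.
\item Attaching disks along these cycles gives $\Omega$ with $\mathrm{H}_1(\Omega,\mathbb{Z}_2)=0$.
\item The nested family is a separation system by Lemma \ref{lem20}. It yields a Borel tree decomposition whose pieces $\Omega[V_t]$ are planar (Lemma \ref{lem9}) with trivial $\mathrm{H}_1$.
\item For such pieces the face boundaries form a Borel $2$-basis. So Theorem \ref{fact1} makes $\mathcal{Q}_T$ measure-treeable, however many ends the pieces have.
\item Theorem \ref{thmJ} transfers treeability to $\mathcal{R}_{\Sigma'}$, and hence to $\mathcal{R}_\Sigma$.
\end{itemize}
All of the end structure, and with it any accessibility issue, is absorbed by the $2$-basis theorem of Conley--Gaboriau--Marks--Tucker-Drob. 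That theorem is the ingredient your plan lacks.
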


Therefore, a key ingredient in the proof of Theorem \ref{thm2'} is the construction of planar coverings satisfying certain properties. We utilize a family of \textit{tracks} on planar complexes (see Section \ref{sec4}), a concept originally introduced for $2$-dimensional complexes by Dunwoody \cite{D2}. Note that a (closed) track on a planar complex is identified with a simple closed curve on the topological realization of the complex. We introduce a notion called irreducibility for these tracks inspired by \cite{C,D1}. The following observation states that the family of irreducible tracks has desirable properties:

\begin{prop} \label{prop0}
Let $\Sigma$ be a connected locally-finite planar complex without singular vertices. Then the set of irreducible tracks on $\Sigma$ is pairwise-nested and generates the fundamental group of $\Sigma$.
\end{prop}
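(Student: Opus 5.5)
We have to guess what "irreducible" means. The natural reading is this: a closed track $t$ on $\Sigma$, a simple closed curve in the realization $|\Sigma|$, is \emph{irreducible} if it is not null-homotopic and, among curves in its homotopy (or $\mathbb{Z}_2$-homology) class, it has minimal length, in the sense of the number of edges crossed, and no track of strictly smaller length crosses it essentially. Our plan is to identify $|\Sigma|$ with a planar surface $S\subset S^2$ with boundary; this is possible because there are no singular vertices. Each complementary component of $S$ in $S^2$ is a "hole", so a simple closed curve $t$ in $S$ separates $S^2$ into two discs. It is essential exactly when both discs contain holes. Thus $t$ determines a bipartition of the set of holes, and two tracks are nested (disjoint up to isotopy) iff their bipartitions are nested in the usual sense.

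\textbf{Generation.} Since $S$ is planar, $\pi_1(S)$ is free, generated by loops around the holes; when $\Sigma$ is infinite one uses an exhaustion by finite subcomplexes. We show that for each hole $h$, or rather for each finite set of holes cut off by some curve, the class of a curve surrounding it contains an irreducible track. Fix any simple closed curve $c$ separating a finite set $A$ of holes from the rest. Among tracks realizing the same bipartition, lengths are positive integers, so a minimal-length representative exists. If two minimal tracks for different bipartitions cross, apply the standard Dunwoody / Meeks--Yau cut-and-paste exchange. At a pair of crossing points, swapping arcs produces two curves whose total length is at most the sum and which together realize bipartitions "at least as simple". This yields irreducible tracks in each class, and a standard induction on the number of holes enclosed shows that loops around single holes, which generate $\pi_1$, are products of irreducible tracks.

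\textbf{Nestedness.} Suppose irreducible $t,t'$ cross essentially. Perform the exchange: $t\cup t'$ cut at two crossings yields curves $t_1,t_2$ with $\ell(t_1)+\ell(t_2)\le \ell(t)+\ell(t')$, and strict inequality once corners are smoothed. One exchanged curve is in the bipartition class of $t$ or $t'$, or is inessential and removable. This contradicts minimality, so irreducible tracks can be chosen pairwise nested. The key step is making this canonical (not just "chosen"). For that we define irreducibility to include a tie-breaking condition that forbids crossings with equal-length competitors, following \cite{C,D1}.

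\textbf{Main obstacle.} The main obstacle will be the crossing/exchange lemma in the combinatorial setting of tracks rather than smooth curves. We must check that swapping patterns in the triangles adjacent to a crossing produces genuine tracks with controlled length. We must also check that the inessential/equal-length cases do not break canonicity. We also need local finiteness so that minimal representatives exist in the infinite case.
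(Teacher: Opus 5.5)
The statement refers to the paper's notion of irreducibility. The notion you guess is a different one, and it does not have the claimed properties.

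\textbf{Nestedness.} In the paper a track is a minimal cut of $\Sigma^1$ in the algebra $\mathcal{A}_\Sigma$ of cuts avoiding boundary edges, identified with $\mathrm{H}_1(\widehat{\Sigma},\mathbb{Z}_2)$ by Lemma~\ref{lem17}. Irreducibility is Chen's inductive condition: the cut does not lie in the subalgebra generated by the cuts $D$ with $(|D|,|\varepsilon D|)$ lexicographically smaller. Homotopy classes play no role, and irreducible tracks can be null-homotopic: a triangulated disk with an interior vertex has $\mathcal{A}_\Sigma\neq 0$, hence has irreducible tracks.

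For your notion, nestedness is actually false, and your exchange step is where it breaks. On a four-holed sphere with holes $a,b,c,d$, tracks $t,t'$ separating $\{a,b\}|\{c,d\}$ and $\{b,c\}|\{a,d\}$ necessarily cross. The cut-and-paste curves bound corners each containing a single hole. These are essential by your criterion and lie in neither class, so $\ell(t_1)+\ell(t_2)\le\ell(t)+\ell(t')$ contradicts no minimality. One can triangulate symmetrically so that $t,t'$ are shortest in their classes, have equal length, and are crossed essentially by no shorter track. Both then satisfy your definition, yet they cross.

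What works is algebraic. The identity $\partial_\mathrm{e}A=\partial_\mathrm{e}(A\cap B)+\partial_\mathrm{e}(A\setminus B)$ in $\mathcal{A}_\Sigma$, together with the submodular corner inequalities, expresses one of two crossing cuts through smaller ones, with ties resolved by the $|\varepsilon C|$ term. That is the content of Chen's theorem (Theorem~\ref{thmC}), which the paper simply applies to $\mathcal{A}_\Sigma$. Its hypothesis holds because every finite cut is a disjoint union of minimal cuts, which again avoid boundary edges. You defer exactly this step to ``a tie-breaking condition following Chen and Dunwoody'', so canonical nestedness is never established.

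\textbf{Generation.} The paper does not argue through holes here either.
\begin{itemize}
\item Theorem~\ref{thmC} gives that irreducible tracks of size at most $n$ generate every element of $\mathrm{H}_1(\widehat{\Sigma},\mathbb{Z}_2)$ of size at most $n$ (Lemma~\ref{lem18}).
\item The real work is the upgrade from $\mathbb{Z}_2$-homology to homotopy in Proposition~\ref{lem3}: a closed $\widehat{\Sigma}$-path $p$ of length at most $n$ is null-homotopic in $\Omega_n$.
\item This is proved by induction on $\chi_{\mathcal{C}_n}(p)$. Take a track $C_1$ that is a leaf among those $p$ crosses. Replace the excursion of $p$ on the leaf side by an arc of $\tau(C_1)$ that is no longer; otherwise $\tau(C_1)$ would be a sum of two shorter cycles, against Remark~\ref{rmk2}.
\item The crossing-free pieces are handled by Lemma~\ref{lem7}. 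That lemma rests on the tree decomposition of $\Omega_n$ along the nested family, the planarity of its pieces (Lemma~\ref{lem9}), and the fact that a simple closed curve on a planar surface is null-homotopic iff null-homologous.
\item With Lemma~\ref{lem19}, attaching discs along the irreducible tracks kills $\pi_1(\Sigma)$, i.e.\ they normally generate it.
\end{itemize}

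Your substitute is: loops around single holes generate $\pi_1$, each such class contains an irreducible track, and one inducts on the number of enclosed holes. This fails in the infinite case. For a triangulation of $S^2$ minus a Cantor set, no simple closed curve encloses a single end, so your induction has no base. Exhausting by finite subcomplexes does not help, since irreducibility depends on the whole complex. You also never verify that a shortest curve in a class satisfies your own extra no-shorter-crossing clause.
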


Now let $\Sigma$ be a connected bounded-degree planar complex without singular vertices. For a positive integer $n$, we can construct a normal covering $\Sigma_n$ of $\Sigma$ so that $\pi_1(\Sigma_n)$ corresponds to the normal subgroup of $\pi_1(\Sigma)$ generated by the set of tracks on $\Sigma$ of length at most $n$. Then $\Sigma_n$ is planar and $\mathrm{H}_1(\Sigma_n,\mathbb{Z}_2)$ is boundedly generated. Moreover, $\Sigma_n$ ``converges" to $\Sigma$ since the normal subgroup corresponding to $\pi_1(\Sigma_n)$ increases to $\pi_1(\Sigma)$ as $n\to \infty$. Since this construction is canonical, we can apply it to pmp complexes and prove Theorem \ref{thm2'}.

\subsection{Related questions}

Sol\'{e}-Pi proves that every unimodular random one-ended minor-excluded graph is sofic \cite{S}. Here a graph $G$ is said to be \textit{minor-excluded} if there is a finite graph $H$ such that $G$ does not contain $H$ as a minor. This result widely extends Tim\'{a}r's work \cite{T}. The proof in \cite{S} shows that these graphs have a property close to treeability, although treeability is not proved.
It is natural to expect the soficity of unimodular random minor-excluded graphs without the assumption on the number of ends \cite[Conjecture 1.6]{S}, which would extend our Theorem \ref{thm1}. However, applying our method to these graphs requires many modifications since we heavily rely on the planarity.

Also, robustness of Theorems \ref{thm1} and \ref{thm3} under quasi-isometry can be a subject of interest. For instance, is every unimodular random graph which is  almost surely quasi-isometric to a planar graph sofic? Indeed, finitely generated groups having quasi-planar Cayley graphs are completely classified \cite{M}, and they are known to be treeable \cite{Ga}. Additionally, in the case of quasi-trees, it has been proved that any Borel graph whose components are quasi-trees is (Borel-)treeable \cite{CPTT}. However, the above question is open even in the one-ended case.

\subsection{Organization of the paper}
In Section \ref{sec2}, we recall basic notions of measure-preserving equivalence relations and Borel complexes. In Section \ref{sec3}, tree decompositions of graphs and irreducibility of cuts are explained. In Section \ref{sec4}, we investigate tracks on planar complexes and coverings of these complexes associated with families of tracks. Finally in Section \ref{sec5}, we prove the main theorems.

\section*{Notations and terminology}
Throughout this paper, a graph means a simplicial graph (i.e., a $1$-dimensional simplicial complex).

\begin{nta}\label{nta1}
Let $G$ be a graph.
\begin{enumerate}
    \item $VG$ and $EG$ are the sets of vertices and edges of $G$, respectively. 
    \item $\overrightarrow{E}G=\{(x,y)\in (VG)^2\mid \{x,y\}\in EG\}$ is the set of oriented edges of $G$. For $e=(x,y)\in \overrightarrow{E}G$, set $s(e)=x,t(e)=y$ and $e^{-1}=(y,x)\in \overrightarrow{E}G$.
    \item For $A\subset VG$, set
    \begin{align*}
        &\partial_\mathrm{iv}^G(A)=\{x\in A\mid \exists y\in VG\setminus A,\ \{x,y\}\in EG\},\\
        &\partial_\mathrm{ov}^G(A)= \partial_\mathrm{iv}^G(VG\setminus A)\ \textup{ and}\\
        &\partial_\mathrm{e}^G(A)=\{e\in EG\mid e\cap A\neq \varnothing,e\cap(VG\setminus A)\neq \varnothing\}.
    \end{align*}
    \item For $x\in VG$, set $\mathrm{N}_G(x)=\{y\in VG\mid \{x,y\}\in EG\}$.
\end{enumerate}
Let $\Sigma$ be a simplicial complex. 
\begin{enumerate}
    \item $V\Sigma$, $E\Sigma$ and $F\Sigma$ are the sets of vertices ($0$-simplices), edges ($1$-simplices) and faces ($2$-simplices) of $\Sigma$, respectively.
    \item $\Sigma^1=V\Sigma\cup E\Sigma$ is the $1$-skeleton of $\Sigma$. 
    \item For a subset $A\subset V\Sigma$, let $\Sigma[A]=\{\sigma\in\Sigma\mid \sigma\subset A\}$ be the induced complex on $A$.
\end{enumerate}
\end{nta}

\begin{dfn}
Let $G$ be a graph.
\begin{enumerate}
    \item A \textit{path} on $G$ (or a $G$-\textit{path}) is a sequence of oriented edges $e_1,e_2,...,e_n\in \overrightarrow{E}G$ such that $s(e_i)=t(e_{i+1})$ for $i=1,2,...,n-1$. Then this path $p$ is denoted by $p=e_1e_2\cdots e_n$, and is called a path from $s(p)=s(e_1)$ to $t(p)=t(e_n)$. The length of $p$ is $n$ and denoted by $\mathrm{len}_G(p)$.
    \item If $G$ is connected, let $d_G:(VG)^2\to \{0,1,2,...\}$ be the path metric, i.e., $d_G(x,y)$ is the length of the shortest paths on $G$ from $x$ to $y$.
    \item A path $p=e_1e_2\cdots e_n$ is said to be simple if $s(e_1),s(e_2),...,s(e_n)$ are distinct from each other and $t(e_1),t(e_2),...,t(e_n)$ are distinct from each other.
    \item A path $p$ is said to be closed if $s(p)=t(p)$.
    \item The homology group $\mathrm{H}_1(G,\mathbb{Z}_2)$ is called the cycle space of $G$ and identified with a subspace of $\mathbb{Z}_2^{\oplus{EG}}$. The \textit{size} of an element $c\in\mathrm{H}_1(G,\mathbb{Z}_2)$ is the number of $e\in EG$ such that $c(e)=1$ as an element of $\mathbb{Z}_2^{\oplus{EG}}$, which is denoted by $|c|$.
    \item For a closed path $p=e_1e_2\cdots e_n$, the element $[p]=\delta_{e_1}+\delta_{e_2}+\cdots+\delta_{e_n}\in\mathrm{H}_1(G,\mathbb{Z}_2)$ is said to be represented by $p$. An element of $\mathrm{H}_1(G,\mathbb{Z}_2)$ represented by a simple closed path is called a simple cycle of $G$. 
\end{enumerate}
The terminology from (i) to (iv) is also applied for a simplicial complex $\Sigma$ in the sense that $G=\Sigma^1$.
\end{dfn}

\section{Preliminaries}\label{sec2}

\subsection{Measured equivalence relations}

Let $(X,\mu)$ be a standard ($\sigma$-finite) measure space with $\mu(X)>0$.

\begin{dfn}
A \textit{countable Borel equivalence relation} on $X$ is a Borel subset $\mathcal{R}\subset X\times X$ such that $\mathcal{R}$ is an equivalence relation on $X$ and every equivalence class is at most countable.

Then, a \textit{partial inner automorphism} of $\mathcal{R}$ is a Borel isomorphism $\varphi:\mathrm{dom}\varphi\to\mathrm{img}\varphi$ between Borel subsets of $X$ such that $\{(x,\varphi x)\mid x\in\mathrm{dom}\varphi\}\subset \mathcal{R}$.

We say that $\mathcal{R}$ is a \textit{measure-preserving equivalence relation} on $(X,\mu)$ if for every partial inner automorphism $\varphi$ of $\mathcal{R}$, we have $\mu(\mathrm{dom}\varphi)=\mu(\mathrm{img}\varphi)$. If $\mu(X)=1$, then we also say that $\mathcal{R}$ is \textit{probability-measure-preserving} (or \textit{pmp}, for short).
\end{dfn}

\begin{nta} Let $\mathcal{R}$ be a measure-preserving equivalence relation on $(X,\mu)$.
\begin{enumerate}
    \item Let $[[\mathcal{R}]]$ be the semigroup of partial inner automorphisms of $\mathcal{R}$ with the canonical semigroup operation. We identify two elements $\varphi,\varphi'\in[[\mathcal{R}]]$ if $\mu(\mathrm{dom}\varphi\bigtriangleup\mathrm{dom}\varphi')=0$ and $\varphi x=\varphi' x$ for almost every $x\in\mathrm{dom}\varphi$.
    \item Define the trace $\tau_\mu:[[\mathcal{R}]]\to [0,\infty]$ by $\tau_\mu(\varphi)=\mu(\{x\in\mathrm{dom}\varphi\mid x=\varphi x\})$.
    \item Define the (extended) metric $d_\mu:[[\mathcal{R}]]\times[[\mathcal{R}]]\to [0,\infty]$ by
    \begin{align*}
        d_\mu(\varphi,\psi)
        &=\mu((\mathrm{dom}\varphi\cup\mathrm{dom}\psi)\setminus\{x\in\mathrm{dom}\varphi\cap\mathrm{dom}\psi\mid \varphi x=\psi x\}),
    \end{align*}
    which is equal to $\mu(\mathrm{dom}\varphi)+\mu(\mathrm{dom}\psi)-\tau_\mu(\psi^{-1}\varphi)$ in the case of $\mu(X)<\infty$.
    \item For $x\in X$, let $[x]_\mathcal{R}=\{y\in X\mid (x,y)\in\mathcal{R}\}$ be the equivalence class containing $x$.
    \item For a Borel subset $A\subset X$ of positive measure, let $\mathcal{R}|_A=\mathcal{\mathcal{R}}\cap A^2$ be the measure-preserving equivalence relation on $(A,\mu|_A)$.
\end{enumerate}
\end{nta}

We define soficity of measure-preserving equivalence relations. For pmp equivalence relations, it is introduced by \cite{EL}, and we follow the formulation of \cite[Definition 3.33]{CGS}.

\begin{dfn}
Let $\mathcal{R}$ be a measure-preserving equivalence relation on $(X,\mu)$.
\begin{enumerate}
    \item Suppose $\mu(X)=1$. Then $\mathcal{R}$ is $\mu$-\textit{sofic} if for every finite subset $K\subset[[\mathcal{R}]]$ and every $\varepsilon>0$, there exists a finite pmp equivalence relation $(Y,\nu,\mathcal{S})$ (i.e., for almost every $y\in Y$, the set $[y]_\mathcal{S}$ is finite) and a map $\rho:K\to [[\mathcal{S}]]$ such that if $\varphi,\psi,\varphi\psi\in K$, then
    \begin{equation*}
        d_\nu(\rho(\varphi),\rho(\psi)) <\varepsilon \textup{ and }|\tau_\mu(\varphi)-\tau_\nu(\rho(\varphi))|<\varepsilon.
    \end{equation*}
    \item Suppose $0<\mu(X)<\infty$. Then $\mathcal{R}$ is $\mu$-\textit{sofic} if the pmp equivalence relation $\mathcal{R}$ on $(X,\mu(X)^{-1}\mu)$ is $\mu(X)^{-1}\mu$-\textit{sofic}.
    \item Suppose $\mu(X)=\infty$. Then $\mathcal{R}$ is $\mu$-\textit{sofic} if for every $A\subset X$ with $0<\mu(A)<\infty$, the equivalence relation $(A,\mu|_A,\mathcal{R}|_A)$ is $\mu|_A$-sofic.
\end{enumerate} 
\end{dfn}

\begin{dfn}
Let $(X,\mu,\mathcal{R})$ and $(Y,\nu,\mathcal{S})$ be measure-preserving equivalence relations.
\begin{enumerate}
    \item A \textit{factor map} $\pi:(X,\mu,\mathcal{R})\to(Y,\nu,\mathcal{S})$ is a measure-preserving map $\pi:(X,\mu)\to (Y,\nu)$ such that for almost every $x\in X$, we have $\pi([x]_\mathcal{R})=[\pi(x)]_\mathcal{S}$.
    \item A factor map $\pi:(X,\mu,\mathcal{R})\to(Y,\nu,\mathcal{S})$ is called an \textit{extension} if for almost every $x\in X$, the restriction $\pi|_{[x]_\mathcal{R}}:[x]_\mathcal{R}\to[\pi(x)]_\mathcal{S}$ is bijective. If moreover $\pi:(X,\mu)\to (Y,\nu)$ is a measure-isomorphism, then $\pi:(X,\mu,\mathcal{R})\to(Y,\nu,\mathcal{S})$ is called an isomorphism.
    \item An \textit{embedding} $\iota:(X,\mu,\mathcal{R})\to(Y,\nu,\mathcal{S})$ is an injective Borel map $\iota:X\to Y$ such that $\iota:(X,\mu,\mathcal{R})\to (\iota(X),\nu|_{\iota(X)},\mathcal{S}|_{\iota(X)})$ is an isomorphism.
\end{enumerate}
\end{dfn}

\begin{rmk}\label{rmk1}
Let $\iota:(X,\mathcal{R})\to(Y,\mathcal{S})$ be a \textit{Borel embedding} of countable Borel equivalence relations, i.e., an injective Borel map $\iota:X\to Y$ such that for every $x\in X$, we have $\iota([x]_\mathcal{R})=[\iota(x)]_\mathcal{S}\cap \iota(X)$. If $(X,\mu,\mathcal{R})$ is measure-preserving, then there exists a unique measure $\nu$ on $Y$ supported on $[\iota(X)]_\mathcal{S}=\bigcup_{x\in X}[\iota(x)]_\mathcal{S}$ such that $(Y,\nu,\mathcal{S})$ is measure-preserving and $\iota:(X,\mu,\mathcal{R})\to(Y,\nu,\mathcal{S})$ is an embedding.
\end{rmk}

The following will be used many times:

\begin{lem}
Let $(X,\mu,\mathcal{R})$ and $(Y,\nu,\mathcal{S})$ be measure-preserving equivalence relations. Suppose that $\mathcal{S}$ is $\nu$-sofic.
\begin{enumerate}
    \item If $\pi:(Y,\nu,\mathcal{S})\to(X,\mu,\mathcal{R})$ is an extension, then $\mathcal{R}$ is $\mu$-sofic.
    \item If $\iota:(X,\mu,\mathcal{R})\to(Y,\nu,\mathcal{S})$ is an embedding, then $\mathcal{R}$ is $\mu$-sofic.
\end{enumerate}
\end{lem}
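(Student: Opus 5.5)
Both parts reduce to the definition: for a finite set $K\subset[[\mathcal{R}]]$ we transport $K$ to a finite subset of $[[\mathcal{S}]]$, use the sofic approximation of $\mathcal{S}$ there, and check that traces and the local composition relation are preserved. Where the definition involves infinite or finite total mass, we first normalise. If $\mu(X)=\infty$ we fix $A$ with $0<\mu(A)<\infty$ and work with $\mathcal{R}|_A$; if the total mass is finite we rescale. This is harmless because restriction and rescaling commute with extensions and embeddings. So in (i) we replace $Y$ by $\pi^{-1}(A)$, which has measure $\mu(A)$ since $\pi$ is measure-preserving. In (ii) $\iota(A)$ has $\nu$-measure $\mu(A)$, and we only need soficity of $\mathcal{S}|_{B}$ for a finite-measure $B\supset\iota(A)$.

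\textbf{Part (ii): embedding.} Identify $X$ with $\iota(X)\subset Y$, so that $\mathcal{R}=\mathcal{S}|_{\iota(X)}$ and $[[\mathcal{R}]]$ becomes the subsemigroup of $[[\mathcal{S}]]$ of elements with domain and image inside $\iota(X)$. This identification preserves composition, $\tau$ and $d$ up to the scaling constant $\nu(B)/\mu(X)$. The key observation is that soficity passes to restrictions to positive-measure subsets; this is essentially the standard ``compression'' fact. To prove it I would use the characterisation of soficity via finite sets $K$ that are allowed to contain the idempotent $\mathrm{id}_{\iota(X)}$. Given a sofic approximation $\rho$ of $\mathcal{S}|_B$ for $K\cup\{\mathrm{id}_{\iota(X)}\}$, its image $\rho(\mathrm{id}_{\iota(X)})$ is close to an idempotent, namely the identity on a subset $Z\subset Y'$ of measure $\approx\nu(\iota(X))$. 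We then take $\mathcal{S}'|_Z$ as the finite model, cut each $\rho(\varphi)$ down to $Z$, and renormalise the measure. The errors in $d$ and $\tau$ are controlled by those of $\rho$ on the products $\varphi\,\mathrm{id}_{\iota(X)}$ and $\mathrm{id}_{\iota(X)}\varphi$, which we include in $K$. I expect this restriction step to be the main technical nuisance, since the approximate idempotent has to be replaced by a genuine one with controlled error.

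\textbf{Part (i): extension.} Here $\pi$ is bijective on classes, so each $\varphi\in[[\mathcal{R}]]$ lifts canonically to $\tilde\varphi\in[[\mathcal{S}]]$. Its domain is $\pi^{-1}(\mathrm{dom}\varphi)$, and $\tilde\varphi(y)$ is the unique $y'\in[y]_\mathcal{S}$ with $\pi(y')=\varphi(\pi(y))$. This lift is Borel by the Lusin--Novikov uniformization theorem, and it is injective because $\pi$ is injective on classes.

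The lift is a semigroup homomorphism with $\tilde\varphi\tilde\psi=\widetilde{\varphi\psi}$. It satisfies $d_\nu(\tilde\varphi,\tilde\psi)=d_\mu(\varphi,\psi)$, because $\tilde\varphi(y)=\tilde\psi(y)$ iff $\varphi\pi(y)=\psi\pi(y)$, again by injectivity on classes, and $\pi$ is measure-preserving. For the same reason $\tau_\nu(\tilde\varphi)=\tau_\mu(\varphi)$.

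Hence, given $K$ and $\varepsilon$, we apply soficity of $\mathcal{S}$ to $\tilde K$ and compose with $\varphi\mapsto\tilde\varphi$. This gives the required $\rho$ directly, with no loss in $\varepsilon$.
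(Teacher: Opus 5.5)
Your proposal is correct. Part (i) is exactly the paper's argument: the canonical lift $\varphi\mapsto\tilde\varphi$ is a trace-preserving semigroup homomorphism $[[\mathcal{R}]]\to[[\mathcal{S}]]$, so sofic approximations of $\mathcal{S}$ pull back to $\mathcal{R}$.

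For (ii) the paper only says the claim is clear from the definition. That is literally true when $\nu(Y)=\infty$, since restriction to finite-measure sets is built into the definition. In that case you can take $B=\iota(A)$ and skip compression entirely. When $\nu(Y)<\infty$, however, one genuinely needs that soficity passes to $\mathcal{S}|_{\iota(X)}$. The paper uses this tacitly, e.g.\ for $X\subset X'$ with finite measure in the proof of Theorem \ref{thm2}.

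Your compression argument supplies this missing step correctly. The step you flag as the main nuisance is actually short. Put $\theta=\rho(\mathrm{id}_{\iota(X)})$ and let $Z$ be its fixed-point set. By injectivity of $\theta$, every $y\in\mathrm{dom}\theta\setminus Z$ is a point where $\theta^2$ and $\theta$ disagree (or $\theta^2 y$ is undefined). Hence $d(\theta,\mathrm{id}_Z)\le d(\theta^2,\theta)<\varepsilon$, and $\nu'(Z)=\tau_{\nu'}(\theta)$ is within $\varepsilon$ of $\nu(\iota(X))/\nu(B)$. Moreover, the products $\mathrm{id}_{\iota(X)}\varphi=\varphi=\varphi\,\mathrm{id}_{\iota(X)}$ already lie in $K$. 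So cutting each $\rho(\varphi)$ down to $Z$ and renormalising costs only $O(\varepsilon)$ in both $d$ and $\tau$.
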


\begin{proof}
If $\pi:(Y,\nu,\mathcal{S})\to(X,\mu,\mathcal{R})$ is an extension, then for every $\varphi\in X$, there exists a unique element $\pi^*(\varphi)\in[[\mathcal{S}]]$ such that $\mathrm{dom}(\pi^*(\varphi))=\pi^{-1}(\mathrm{dom}\varphi)$ and for almost every $y\in \mathrm{dom}(\pi^*(\varphi))$, we have $\pi(\pi^*(\varphi)y)=\varphi(\pi(y))$. Then the map $\pi^*:[[\mathcal{R}]]\to[[\mathcal{S}]]$ is a homomorphism of semigroups and preserves the traces. Hence assertion (i) follows.

Assertion (ii) is clear from the definition of soficity.
\end{proof}

\begin{dfn}
Let $\{\mathcal{R}_i\}_{i\in I}$ be a countable family of countable Borel equivalence relations on $X$. 
We say that the family $\{\mathcal{R}_i\}_{i\in I}$ is \textit{freely intersecting} if the following holds: If a finite sequence $\{x_1,x_2,...,x_n\}\subset X$ satisfies $x_k\neq x_{k+1}$ and $(x_k,x_{k+1})\in \mathcal{R}_{i(k)}$ with $i(k)\in I$ for every $1\leq k\leq n-1$, then we have $x_1\neq x_n$ or $i(k)=i(k+1)$ for some $1\leq k\leq n-1$.

If $\{\mathcal{R}_i\}_{i\in I}$ is freely intersecting, then the minimal equivalence relation on $X$ containing all $\mathcal{R}_i$ is denoted by $\bigast_{i\in I}\mathcal{R}_i$ and called the \textit{free product} of $\{\mathcal{R}_i\}_{i\in I}$.
\end{dfn}

\begin{dfn}
Let $(X,\mathcal{R})$ be a countable Borel equivalence relation.
\begin{enumerate}
    \item The relation $\mathcal{R}$ is \textit{treeable} if there exists an acyclic Borel graph $T$ on $X$ such that $\mathcal{R}$ is the connected relation of $T$.
    \item For a standard measure $\mu$ on $X$, the relation $\mathcal{R}$ is said to be $\mu$-\textit{treeable} if there exists a $\mu$-conull subset $X'\subset X$ such that $(X',\mathcal{R}|_{X'})$ is treeable.
    \item The relation $\mathcal{R}$ is \textit{measure-treeable} if $\mathcal{R}$ is $\mu$-treeable for any standard measure $\mu$ on $X$.
\end{enumerate}
\end{dfn}

\begin{rmk}\label{lem13}
A countable Borel equivalence relation is treeable if and only if it is a free product of finite equivalence relations. In particular, treeability is preserved by taking free products.
\end{rmk}

\begin{prop}\cite[Theorem 4]{EL} \label{prop2}
Every pmp $\mu$-treeable equivalence relation on $(X,\mu)$ is $\mu$-sofic.
\end{prop}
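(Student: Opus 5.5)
This proposition is cited from Elek--Lippner, so the plan is to reproduce the standard argument that treeable pmp relations are sofic. We reduce to graphings and local approximation of trees. The argument has three steps.

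\emph{Step 1: a treeing and a reduction to cocycles.} Fix a Borel acyclic graph $T$ on a conull $X'$ with $\mathcal{R}|_{X'}=\mathcal{R}_T$. Since $[[\mathcal{R}]]$ is generated, up to arbitrarily small measure error, by partial automorphisms whose graphs lie in a finite union of pieces of $T$, it suffices to handle these. Concretely, for a finite $K$ and $\varepsilon>0$, we choose $r$ and a Borel set of measure $>1-\varepsilon$ such that on it every $\varphi\in K$ moves $x$ along a $T$-path of length $\le r$. We may also assume the degrees of $T$ are bounded by some $D$ on the relevant region: we delete the edges at vertices of huge degree, which costs small measure and turns $T$ into a forest with some components split. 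Soficity is then reduced to approximating the finite-range data $(T_x,x)$ up to radius $r$.

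\emph{Step 2: finite approximation of the random rooted forest.} Label each vertex by finitely many colours encoding, for $\varphi\in K$, which path of length $\le r$ it follows. The resulting unimodular random rooted labelled bounded-degree tree is a Benjamini--Schramm limit of finite labelled graphs. This is the soficity of unimodular trees \cite{B,E,BLS}, which I take as the key input. In Elek's proof one builds finite graphs by cutting the tree into finite pieces via an invariant percolation with large finite clusters, and then gluing rooted-neighbourhood statistics by a matching argument. On the finite graph $G_n$ with uniform measure, let $\mathcal{S}$ be connectivity and define $\rho(\varphi)$ by following the labelled paths. The local statistics then match to within $\varepsilon$, which gives $|\tau_\mu(\varphi)-\tau_\nu(\rho(\varphi))|<\varepsilon$ and the approximate multiplicativity $d_\nu(\rho(\varphi\psi),\rho(\varphi)\rho(\psi))<\varepsilon$.

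\emph{Step 3: the traces.} The trace of a word in $K$ is computed from local data of radius $\le r|w|$. The main obstacle is ensuring that traces of elements fixing points are reproduced; in a tree, a composite path returns to its start exactly when it backtracks. Because $G_n$ can be chosen with few short cycles (large girth locally, via a finite cover argument), the same identity holds for most vertices of $G_n$. The delicate part is therefore producing finite approximants with both correct labelled local statistics and almost no short cycles. I would first try Elek's percolation-and-glue construction, followed by taking a random finite-sheeted cover to kill short cycles.
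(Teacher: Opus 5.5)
The paper gives no argument of its own for this statement; it simply cites Elek--Lippner. Your outline is essentially their argument and is sound: truncate a locally finite treeing to bounded degree near most points, encode the finitely many $\varphi\in K$ by finitely many marks, and pull back a Benjamini--Schramm approximation of the resulting marked unimodular tree, which the soficity of unimodular trees supplies.

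Two small remarks. First, the ``few short cycles'' issue you call delicate is already handled by your key input. Benjamini--Schramm convergence to a tree-supported limit forces most $R$-balls of $G_n$ to be trees, so no random covers are needed. Second, following the marked paths does not by itself give a partial injection. To land in $[[\mathcal{S}]]$, also mark $\varphi^{-1}$, and define $\rho(\varphi)$ only at vertices $v$ whose $\varphi$-path ends at some $w$ from which the $\varphi^{-1}$-path returns to $v$. This makes $\rho(\varphi)$ injective and still covers all but an $\varepsilon$-fraction of the intended domain.
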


Moreover, the following proposition is well known to experts. For example, it easily follows from \cite[Lemma 7.2]{DKP1}.

\begin{prop}\label{lem14}
Let $\mathcal{R}=\bigast_{i\in I}\mathcal{R}_i$ be a measure-preserving equivalence relation on $(X,\mu)$.
If $\mathcal{R}_i$ is $\mu$-sofic for every $i\in I$, then so is $\mathcal{R}$.
\end{prop}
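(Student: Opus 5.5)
The plan is to reduce to the case $\mu(X)=1$ and then use the standard sofic approximation of a free product. If $\mu(X)=\infty$, soficity is tested on restrictions $\mathcal{R}|_A$ with $0<\mu(A)<\infty$. I would first check that $\mathcal{R}|_A$ is stably related to the $\mathcal{R}_i$: restricting to a complete section of $[A]_{\mathcal{R}}$ and normalizing. Alternatively, I would work directly with $\mathcal{R}|_{[A]_{\mathcal{R}}}$ and then pass to $A$. For this I use the embedding part of the lemma above, together with the standard fact that $\mu$-soficity passes to restrictions and amplifications (compressions) of the relation. Each $\mathcal{R}_i|_{[A]}$ is a restriction to an $\mathcal{R}_i$-invariant set, so it is still sofic. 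So I may assume $\mu(X)=1$ and that every $\mathcal{R}_i$ is pmp and $\mu$-sofic.

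Next I use finiteness of the data. Any finite $K\subset[[\mathcal{R}]]$ is approximated in $d_\mu$ by elements built as finite products $\varphi_{1}\cdots\varphi_{m}$. Here each $\varphi_j$ lies in $[[\mathcal{R}_{i(j)}]]$ and $i(j)$ ranges over a finite set $I_0\subset I$; this holds because $\mathcal{R}$ is generated by the $\mathcal{R}_i$. Soficity is stable under such approximation, since the defining inequalities are continuous in $d_\mu$ and $\tau_\mu$. Hence it suffices to handle $\bigast_{i\in I_0}\mathcal{R}_i$ with $I_0$ finite. By induction on the number of factors, it further suffices to treat a free product of two relations, $\mathcal{R}_1*\mathcal{R}_2$.

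For two factors I would invoke \cite[Lemma 7.2]{DKP1}, or reprove it by the random-permutation trick. Take sofic approximations $(Y_1,\nu_1,\mathcal{S}_1)$ and $(Y_2,\nu_2,\mathcal{S}_2)$ for finite pieces of $\mathcal{R}_1$ and $\mathcal{R}_2$. After refining, I realize both as finite relations on a common finite set $[N]$ with uniform measure, approximating the same partition data of $X$. I then conjugate $\mathcal{S}_2$ by a uniformly random permutation $\sigma$ of $[N]$ that preserves the labels coming from a fine finite partition of $X$ (the common factor). The candidate approximation of $\mathcal{R}_1*\mathcal{R}_2$ is the relation generated by $\mathcal{S}_1$ and $\sigma\mathcal{S}_2\sigma^{-1}$, with $\rho$ extended multiplicatively on alternating words.

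The main obstacle is verifying the trace condition for alternating words. For a word $w=\varphi_1\psi_1\cdots\varphi_k\psi_k$ with $\varphi_j\in[[\mathcal{R}_1]]$ and $\psi_j\in[[\mathcal{R}_2]]$, free intersection forces $\tau_\mu(w)$ to equal the measure of points where every step of the word is trivial. I must show the random model has the same property asymptotically. That is, with high probability as $N\to\infty$, a nontrivial alternating path in the model rarely closes up. This is the usual first-moment estimate: each alternation step lands on a near-uniform point of the label class, so a closure occurs with probability $O(1/N)$. A union bound over the finitely many words of bounded length, followed by concentration, yields a deterministic $\sigma$ that works for all of $K$ simultaneously. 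The remaining bookkeeping, namely matching the label classes and the multiplicativity errors, is routine.
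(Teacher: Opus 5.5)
Your core step is the same as the paper's, which gives no argument beyond saying the statement follows from \cite[Lemma 7.2]{DKP1}. The gap is in the reduction to $\mu(X)=1$ that you put in front of it.

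When $\mu(X)=\infty$, the saturation $[A]_{\mathcal{R}}$ of a set $A$ with $0<\mu(A)<\infty$ may itself have infinite measure (if $\mathcal{R}$ is ergodic it is conull), so there is nothing to normalize. This case does occur in the paper: the measures $\nu_T$ on $Y_T$ in Section~\ref{sec3.1} are only $\sigma$-finite.

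Passing instead to a finite-measure complete section, such as $A$ itself, does not help, because restriction to a non-invariant set does not commute with free products. Suppose $x,y\in A$, $z\notin A$, $(x,z)\in\mathcal{R}_1$ and $(z,y)\in\mathcal{R}_2$. Then $(x,y)\in\mathcal{R}|_A$, but uniqueness of reduced chains (free intersection) shows $(x,y)\notin\mathcal{R}_1|_A*\mathcal{R}_2|_A$. So the hypotheses on the $\mathcal{R}_i$ say nothing directly about $\mathcal{R}|_A$, and being ``stably related'' is exactly what would need proof.

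A correct reduction is short:
\begin{enumerate}
\item Free intersection is inherited by restriction to any Borel set. So for finite-measure sets $A\subset B_1\subset B_2\subset\cdots$ exhausting $X$, the relation $\mathcal{S}_n=\bigast_{i}\mathcal{R}_i|_{B_n}$ is a free product of sofic relations on a finite measure space, hence sofic by the finite case.
\item $\mathcal{S}_n|_A$ is sofic by the embedding part of the lemma.
\item $\mathcal{R}|_A=\bigcup_n\mathcal{S}_n|_A$ is an increasing union, since any finite chain from $x$ to $y$ lies in some $B_n$. Hence $\mathcal{R}|_A$ is sofic.
\end{enumerate}

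Separately, your own sketch of the two-factor case describes the trace incorrectly. $\tau_\mu(w)$ is not the measure of the set where every step of $w$ is trivial. Take $\psi_1=\psi_2=\mathrm{id}_X$ and $\varphi_1=\varphi_2^{-1}$ with $\varphi_2$ fixed-point free. Then $w=\mathrm{id}_{\mathrm{dom}\varphi_2}$ has trace $\mu(\mathrm{dom}\varphi_2)$, yet the step $\varphi_2$ is nowhere trivial.

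What free intersection actually gives is this. Split each letter into its restriction to its fixed-point set and a fixed-point-free part, absorb the resulting partial identities into neighbouring letters, merge, and iterate. After this, every nonempty alternating word in fixed-point-free letters has trace $0$.

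The random model must reproduce these cancellations. That is why the fixed-point sets, domains and images of all letters must be among the labels that $\sigma$ preserves. Your $O(1/N)$ first-moment estimate applies only after this reduction. This is the real content of amalgamating over $L^\infty(X)$ rather than routine bookkeeping, although citing \cite[Lemma 7.2]{DKP1}, as the paper does, covers it.
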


\subsection{Borel complexes}

We recall the definition of Borel complexes.

\begin{dfn}
A \textit{Borel complex} on $X$ is a locally finite simplicial complex such that the set of vertices is $X$ and the set of simplices is a Borel subset of the set of finite subsets of $X$. Then we also say that $(X,\Sigma)$ is a Borel complex. A $1$-dimensional Borel complex is called a \textit{Borel graph}.
\end{dfn}

Throughout this article, we only consider Borel complexes of dimension at most $2$.

\begin{nta}\label{nta2}
Let $\Sigma$ be a Borel complex on $X$.
\begin{enumerate}
    \item Let $\mathcal{R}_{\Sigma}=\mathcal{R}_{\Sigma^1}$ denote the connected relation of $\Sigma$ on $X$.
    \item For $x\in X$, let $\Sigma_x$ denote the connected component of $\Sigma$ containing $x$.
\end{enumerate}
\end{nta}

\begin{dfn}
A measure-preserving complex on $(X,\mu)$ is a Borel complex on $X$ such that $\mathcal{R}_{\Sigma}$ is pmp on $(X,\mu)$. Then we also say that $(X,\mu,\Sigma)$ is a \textit{measure-preserving complex}. If $\mu(X)=1$, then a measure-preserving complex is called a pmp complex. 

For a measure-preserving complex $(X,\mu,\Sigma)$, let $[[\Sigma]]$ denote the set of $\varphi\in[[\mathcal{R}_\Sigma]]$ such that for almost every $x\in\mathrm{dom}\varphi$, we have $\{x,\varphi x\}\in E\Sigma$.
\end{dfn}

Now we define covering factor maps between measure-preserving complexes.

\begin{dfn}\label{dfn1}
Let $(X,\mu,\Sigma),(Y,\nu,\Sigma')$ and $(X_n,{\mu_n},\Sigma_n)\ (n=1,2,...)$ be measure-preserving complexes. 
\begin{enumerate}
    \item A \textit{covering factor map} $\pi:(X,\mu,\Sigma)\to (Y,\nu,\Sigma')$ is a factor map $\pi:(X,\mu,\mathcal{R}_\Sigma)\to (Y,\nu,\mathcal{R}_{\Sigma'})$ such that for almost every $x\in X$, the restriction $\pi|_{[x]_\mathcal{R}}$ gives a covering map $\Sigma_x\to \Sigma'_{\pi(x)}$ of simplicial complexes.
    \item If $\pi:(X,\mu,\Sigma)\to (Y,\nu,\Sigma')$ is a covering map, then for every $\psi\in [[\Sigma']]$, the element $\varphi\in[[\Sigma]]$ is defined so that $\psi(\pi(x))=\pi(\varphi x)$ for almost every $x\in \mathrm{dom}\varphi=\pi^{-1}(\mathrm{dom}\psi)$. We call $\varphi$ the \textit{lift} of $\psi$ to with respect to $\pi$.
    \item Let $\pi_n:(X_n,{\mu_n},\Sigma_n)\to (Y,\nu,\Sigma')$ be a sequence of covering factor maps. This sequence is called an \textit{asymptotic extension} if for all finitely many $\psi^1,\psi^2,...,\psi^{k}\in [[\Sigma']]$, the lifts $\varphi_n^i\in [[\Sigma_n]]$ of $\psi^i$ with respect to $\pi_n$ ($i=1,2,...,k$) satisfy 
\begin{equation*}
    \tau_{\mu_n}(\varphi_n^1\varphi_n^2\cdots\varphi_n^{k})\to
    \tau_\nu(\psi^1\psi^2\cdots\psi^{k})
    \quad \textup{as }n\to \infty.
\end{equation*}
\end{enumerate}
\end{dfn}

\begin{lem}\label{lem1}
Let $\pi_n:(X_n,{\mu_n},\Sigma_n)\to (X,\mu,\Sigma)$ be a sequence of covering factor maps between pmp complexes. Suppose that the sequence $\pi_n$ is an asymptotic extension, and $\mathcal{R}_{\Sigma_n}$ is sofic for every $n$. Then $\mathcal{R}_{\Sigma}$ is sofic.
\end{lem}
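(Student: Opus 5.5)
We argue directly from the definition of $\mu$-soficity, combining the approximation of $\Sigma$ by the $\Sigma_n$ with the soficity of each $\mathcal{R}_{\Sigma_n}$.

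\emph{Step 1: reduction to generators.} Fix a finite $K\subset[[\mathcal{R}_\Sigma]]$ and $\varepsilon>0$. Every $\varphi\in[[\mathcal{R}_\Sigma]]$ can be approximated in $d_\mu$, up to error $\delta$, by an element of the form
\[
\varphi\approx \bigsqcup_j \psi_{j,1}\psi_{j,2}\cdots\psi_{j,m_j}\big|_{A_j},
\]
a finite disjoint union of restrictions of words in elements of $[[\Sigma]]$ to Borel sets $A_j$. Indeed, $\varphi x$ is joined to $x$ by a $\Sigma^1$-path, and by local finiteness and a Lusin--Novikov argument we may choose Borel edge-moves $\psi\in[[\Sigma]]$ realizing these paths. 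Restrictions to Borel sets $A$ are themselves elements of $[[\mathcal{R}_\Sigma]]$ (partial identities $\mathrm{id}_A$), so they must be handled too. For this we use that soficity may be tested on the semigroup generated by a finite set $\Psi\subset[[\Sigma]]$ together with finitely many partial identities $\mathrm{id}_{A}$. This is the standard fact that soficity only needs to be checked on a dense (in $d_\mu$) set of words; it follows from the triangle inequality for $d_\mu$ and the $1$-Lipschitz continuity of $\tau_\mu$ and of composition. Since $\mu(X)=1$, we may approximate each $A$ by a set built from the finitely many atoms that the chosen words can see, so the relevant set $A$ is finite.

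\emph{Step 2: lifting partial identities.} We need $\mathrm{id}_A$ to lift as well, but a general $A\subset X$ is not determined by local structure. We therefore lift it as $\mathrm{id}_{\pi_n^{-1}(A)}$. For a word $w$ in elements of $\Psi$ and such partial identities, let $w_n$ denote the corresponding word in their lifts. The lift map is a semigroup homomorphism on $[[\Sigma]]$ together with partial identities, since composition of lifts is the lift of the composition. Moreover, $\tau_{\mu_n}(w_n)\to\tau_\mu(w)$. To see this, note that $\pi_n$ is a factor map, so
\[
\mu_n\bigl(\pi_n^{-1}(A)\cap\{y: w_n y=y\}\bigr)
=\mu\bigl(\{x\in A: wx=x\}\bigr)-\mu_n\bigl(\{y\in\pi_n^{-1}(A): w_ny\neq y,\ \pi_n(w_ny)=\pi_n(y)\}\bigr),
\]
and the error term is bounded by $\tau_\mu(w')-\tau_{\mu_n}(w'_n)$ for the word $w'$ obtained by deleting the identities. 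That difference tends to $0$ by the asymptotic extension hypothesis, because lifted fixed points map to fixed points.

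This step is the main technical obstacle: it is where one must check that the asymptotic extension hypothesis, which is stated only for products of elements of $[[\Sigma]]$, controls the traces once partial identities are inserted. If the displayed inequality does not hold exactly as written, the fallback is to absorb each $\mathrm{id}_A$ by replacing it with restrictions of generators to $A$, since $\psi|_A\in[[\Sigma]]$ whenever $\psi\in[[\Sigma]]$.

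\emph{Step 3: conclusion.} Choose $n$ so large that $|\tau_{\mu_n}(w_n)-\tau_\mu(w)|<\varepsilon/2$ for the finitely many words $w$ needed. Metric distances are expressed through traces, $d(\varphi,\psi)=\mu(\mathrm{dom}\,\varphi)+\mu(\mathrm{dom}\,\psi)-\tau(\psi^{-1}\varphi)$, and the domain measures are themselves traces of the words $\varphi^{-1}\varphi$. Hence $d_{\mu_n}$ and $\tau_{\mu_n}$ approximate $d_\mu$ and $\tau_\mu$ on the lifted finite set. Now apply the $\mu_n$-soficity of $\mathcal{R}_{\Sigma_n}$ to this finite set with tolerance $\varepsilon/2$, obtaining a finite pmp relation $(Y,\nu,\mathcal{S})$ and a map $\rho_n$. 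The composition $\rho=\rho_n\circ(\text{lift})$ is then an $\varepsilon$-approximation for $K$ as in the definition, so $\mathcal{R}_\Sigma$ is $\mu$-sofic.
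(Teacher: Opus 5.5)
Your proposal is correct and follows essentially the same route as the paper. Both arguments write each $\varphi\in K$, up to small measure, as a disjoint union of pieces on which it is a word in elements of $[[\Sigma]]$, lift these words piecewise through $\pi_n$, use the asymptotic-extension hypothesis to make the finitely many relevant traces (and hence domain measures and distances) close, and then compose with a sofic approximation of $\mathcal{R}_{\Sigma_n}$. One small correction: the exact formula is $d_\mu(\varphi,\psi)=\mu(\mathrm{dom}\varphi)+\mu(\mathrm{dom}\psi)-\mu(\mathrm{dom}\varphi\cap\mathrm{dom}\psi)-\tau_\mu(\psi^{-1}\varphi)$, but the extra term equals $\tau_\mu(\varphi^{-1}\varphi\psi^{-1}\psi)$, so distances are still expressed through traces.

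Your displayed identity in Step~2 is valid, but the detour through partial identities is unnecessary. The paper simply absorbs each restriction into the generators, which is your ``fallback'': it takes $A_{\varphi,j}=\mathrm{dom}(\varphi^{1,j}\cdots\varphi^{k,j})$ with suitably restricted $\varphi^{i,j}\in[[\Sigma]]$, whose lifts automatically have domain $\pi_n^{-1}(A_{\varphi,j})$. The paper also explicitly checks that the join of the lifted pieces is injective, which you should add. It follows from $\pi_n\circ\rho(\varphi)=\varphi\circ\pi_n$ together with the injectivity of each lifted word.
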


\begin{proof}
Let $K\subset[[\mathcal{R}_\Sigma]]$ be a finite set and $\varepsilon>0$. Since $\mathcal{R}_{\Sigma_n}$ is sofic for every $n$, it suffices to find a positive integer $n$ and a map $\rho:K\to[[\mathcal{R}_{\Sigma_n}]]$ such that if $\varphi,\psi,\varphi\psi\in K$, then we have $d_{\mu_n}(\rho(\varphi),\rho(\psi)) <\varepsilon$ and $|\tau_\mu(\varphi)-\tau_{\mu_n}(\rho(\varphi))|<\varepsilon$.

Take a positive integer $k$ so that for every $\varphi\in K$, there exist a finite set $J_\varphi$ and a family $\{\varphi^{1,j},...,\varphi^{k,j}\}_{j\in J_\varphi}\subset [[\Sigma]]$ such that $A_{\varphi,j}=\mathrm{dom}(\varphi^{1,j}\varphi^{2,j}\cdots\varphi^{k,j})\ (j\in J_\varphi)$ are pairwise disjoint subsets of $\mathrm{dom}\varphi$, and we have
\begin{align}
    &\varphi|_{A_{\varphi,j}}=\varphi^{1,j}\varphi^{2,j}\cdots\varphi^{k,j}
     \quad\textup{and} \label{eq4}
     \\
    &\mu\left(\mathrm{dom}\varphi\setminus\bigsqcup_{j\in J_\varphi}A_{\varphi,j}\right) <\frac{\varepsilon}{12}. \label{eq5}
\end{align}
Set $A_\varphi=\bigsqcup_{j\in J_\varphi}A_{\varphi,j}$ for each $\varphi\in K$. Note that for all $\varphi,\psi,\theta \in K$, we have by \eqref{eq5}
\begin{equation}
    d_\mu(\theta^{-1}\varphi\psi, (\theta|_{A_\theta})^{-1}\varphi|_{A_\varphi}\psi|_{A_\psi}) <\frac{\varepsilon}{4}. \label{eq2}
\end{equation}
For each $n\in\mathbb{Z}_{\geq 1}$, $\varphi\in K$, $i\in \{1,...,k\}$ and $j\in J_\varphi$, let $\varphi_n^{i,j}$ be the lift of $\varphi^{i,j}$ with respect to $\pi_n$, and set $\varphi_n^j=\varphi_n^{1,j}\varphi_n^{2,j}\cdots\varphi_n^{k,j}$.

Now since $\pi_n$ is an asymptotic extension, we can take a positive integer $n$ so that for all $\varphi,\psi,\theta \in K$, we have
\begin{align}
    &\sum_{j\in J_\varphi}
    \left|\tau_{\mu_n}(\varphi_n^j) -
    \tau_\mu (\varphi|_{A_{\varphi,j}})\right| <\frac{\varepsilon}{2} \quad \textup{and} \label{eq3}
    \\
    &\sum_{j\in J_\varphi}\sum_{j'\in J_\psi}\sum_{j''\in J_\theta}
    \left|\tau_{\mu_n}\left((\theta_n^{j''})^{-1}\varphi_n^j\psi_n^{j'}\right) -
    \tau_\mu \left((\theta^{j''}|_{A_{\theta,j''}})^{-1}\varphi|_{A_{\varphi,j}}\psi^{j'}|_{A_{\psi,j'}}\right)\right| <\frac{\varepsilon}{4}. \label{eq1}
\end{align}
For $\varphi\in K$, define $\rho(\varphi)\in [[\mathcal{R}_{\Sigma_n}]]$ by $\mathrm{dom}\rho(\varphi)=\pi_n^{-1}(\bigsqcup_{j\in J_\varphi}A_{\varphi,j})$ and $\rho(\varphi)x=\varphi_n^{1,j}\cdots\varphi_n^{k,j}x$
for every $x\in \pi_n^{-1}(A_{\varphi,j})$. We verify that $\rho(\varphi)$ is indeed injective. By construction, we have $\pi_n\circ\rho(\varphi)=\varphi\circ\pi_n$ on $\mathrm{dom}\rho(\varphi)$. Thus if $x,y\in \mathrm{dom}\rho(\varphi)$ satisfy $\pi_n(x)\neq \pi_n(y)$, then it is clear that $\rho(\varphi)x\neq \rho(\varphi)y$. On the other hand, if distinct $x,y\in \mathrm{dom}\rho(\varphi)$ satisfy $\pi_n(x)=\pi_n(y)\in A_{\varphi,j}$ for some $j$, then we have $\rho(\varphi)x=\varphi_n^{1,j}\cdots\varphi_n^{k,j}x$ and $\rho(\varphi)y=\varphi_n^{1,j}\cdots\varphi_n^{k,j}y$, which are distinct. Hence we have $\rho(\varphi)\in [[\mathcal{R}_{\Sigma_n}]]$.

Let $\varphi,\psi\in K$ and $\theta=\varphi\psi\in K$. By \eqref{eq1}, we have 
\begin{equation*}
    \left|\tau_{\mu_n}(\rho(\theta)^{-1}\rho(\varphi)\rho(\psi))-\tau_\mu((\theta|_{A_\theta})^{-1}\varphi|_{A_\varphi}\psi|_{A_\psi})\right| <\frac{\varepsilon}{4},
\end{equation*}
and then by \eqref{eq2}
\begin{equation*}
     \left|\tau_{\mu_n}(\rho(\varphi\psi)^{-1}\rho(\varphi)\rho(\psi))-\mu(\mathrm{dom}(\varphi\psi))\right| =\left|\tau_{\mu_n}(\rho(\theta)^{-1}\rho(\varphi)\rho(\psi))-\tau_\mu(\theta^{-1}\varphi\psi)\right| <\frac{\varepsilon}{2},
\end{equation*}
where we use $\theta=\varphi\psi$ and $\tau_\mu(\theta^{-1}\varphi\psi)=\mu(\mathrm{dom}(\varphi\psi))$.
Since $\mathrm{dom}\rho(\varphi\psi),\mathrm{dom}(\rho(\varphi)\rho(\psi))\subset \pi^{-1}(\mathrm{dom}(\varphi\psi))$, we have
\begin{align*}
    d_{\mu_n}(\rho(\varphi)\rho(\psi),\rho(\varphi\psi))
    &=\mu_n(\mathrm{dom}(\rho(\varphi)\rho(\psi)))+\mu_n(\mathrm{dom}\rho(\varphi\psi))-2\tau_{\mu_n}(\rho(\theta)^{-1}\rho(\varphi)\rho(\psi))\\
    &\leq 2\mu(\mathrm{dom}(\varphi\psi)) - 2\tau_{\mu_n}(\rho(\theta)^{-1}\rho(\varphi)\rho(\psi))<\varepsilon.
\end{align*}
Also, we have by inequality \eqref{eq3},
\begin{equation*}
    |\tau_{\mu_n}(\rho(\varphi)) - \tau_{\mu}(\varphi)|
    \leq |\tau_{\mu_n}(\rho(\varphi)) - \tau_{\mu}(\varphi|_{A_\varphi})|+ d_\mu(\varphi|_{A_\varphi},\varphi)<\frac{\varepsilon}{2}+\frac{\varepsilon}{12}<\varepsilon.
\end{equation*}
\end{proof}

Finally, we introduce the barycentric subdivisions of Borel complexes.

\begin{dfn}
Let $\Sigma$ be a simplicial complex. The barycentric subdivision of $\Sigma$ is the simplicial complex $\Sigma'$ such that $V\Sigma'=\Sigma$ and the set of $n$-simplices of $\Sigma'$ is
\begin{equation*}
    \{\{\sigma_1,...,\sigma_n\}\subset \Sigma\mid \sigma_1\subsetneq\sigma_2\subsetneq\cdots\subsetneq\sigma_n\}.
\end{equation*}
\end{dfn}

For a Borel complex $(X,\Sigma)$, the barycentric subdivision $(X',\Sigma')$, where $X'=V\Sigma'=\Sigma$, is also a Borel complex. Moreover, the inclusion map $X\hookrightarrow X'$ induces a Borel embedding $(X,\mathcal{R}_{\Sigma})\to (X',\mathcal{R}_{\Sigma'})$.

\subsection{Measured groupoids}

We also need some preparation on pmp groupoids.

\begin{dfn}
A \textit{countable Borel groupoid} $\mathcal{G}$ on $X$ is a standard Borel space $\mathcal{G}$ equipped with a groupoid structure such that the set of objects $\mathcal{G}^0$ is a Borel subset of $\mathcal{G}$ and identified with $X$, and the source and target maps $s,t:\mathcal{G}\to \mathcal{G}^0=X$ are countable-to-one Borel maps.

For a countable Borel groupoid $\mathcal{G}$ on $X$, the set $\mathcal{R}_\mathcal{G}=\{(s(g),t(g))\in X^2\mid g\in\mathcal{G} \}$ is a countable Borel equivalence relation on $X$. If $\mathcal{R}_\mathcal{G}$ is a pmp equivalence relation on $(X,\mu)$, then $\mathcal{G}$ is called a \textit{pmp groupoid} on $(X,\mu)$. 
\end{dfn}

\begin{exm}[{Principal groupoids}]
A pmp equivalence relation $\mathcal{R}$ on $(X,\mu)$ is a pmp groupoid on $(X,\mu)$ by setting $s(x,y)=x$ and $t(x,y)=y$ for $(x,y)\in\mathcal{R}$.
\end{exm}

\begin{dfn}
Let $(X,\mu,\mathcal{G})$ and $(Y,\nu,\mathcal{H})$ be pmp groupoids. A homomorphism $\pi:\mathcal{G}\to\mathcal{H}$ is called an \textit{extension} if $\pi|_X:(X,\mu)\to (Y,\nu)$ is measure-preserving, and for almost every $x\in X$, the map $\pi|_{s^{-1}(x)}:s^{-1}(x)\to s^{-1}(\pi(x))$ is bijective. If moreover $\mathcal{G}$ is a pmp equivalence relation on $(X,\mu)$, then $\pi$ is called a \textit{principal extension}.
\end{dfn}

\begin{rmk}
Every pmp groupoid $\mathcal{G}$ admits a principal extension $\mathcal{R}\to\mathcal{G}$. For instance, it is given by the \textit{Bernoulli action} of $\mathcal{G}$ (see \cite[Section 3]{BT}). 
\end{rmk}

The next example is one of the keys to prove Theorem \ref{thm2'}.

\begin{exm}[{Fundamental groupoids}]\label{exm2}
Let $(X,\mu,\Sigma)$ be a pmp $2$-dimensional simplicial complex. Let $\mathrm{Path}(\Sigma)$ be the set of paths on $\Sigma$. Then $\mathrm{Path}(\Sigma)$ is a pmp groupoid on $X$, where the product and inverse of paths are defined by concatenation and orientation-reversing, respectively.

Let $\sim$ be the homotopy equivalence relation on $\Sigma$, i.e., for $p,q\in\mathrm{Path}(\Sigma)$, we have $p\sim q$ if $(s(p),t(p))=(t(q),s(q))$ and the closed path $pq^{-1}$ represents the trivial element of the fundamental group $\pi_1(\Sigma,s(p))$. Then the fundamental groupoid $\Pi_1(\Sigma)$ on $X$ is defined by $\Pi_1(\Sigma)=\mathrm{Path}(\Sigma)/\sim$. 

Since $\sim$ is a smooth equivalence relation on $\mathrm{Path}(\Sigma)$, the set $\Pi_1(\Sigma)$ is regarded as a standard Borel space. Then $\Pi_1(\Sigma)$ is a pmp groupoid on $(X,\mu)$ with the groupoid structure induced from $\mathrm{Path}(\Sigma)$. This is called the \textit{fundamental groupoid} of $\Sigma$.

The projection $\varepsilon: \Pi_1(\Sigma)\to \mathcal{R}_{\Sigma},\ g\mapsto (s(g),t(g))$ is a factor map. Take a principal extension $\rho:(Y,\nu,\widetilde{\mathcal{R}})\to (X,\mu,\Pi_1(\Sigma))$ and set $\pi=\varepsilon\circ\rho:(Y,\nu,\widetilde{\mathcal{R}})\to (X,\mu,\mathcal{R}_{\Sigma})$. We regard each $e=(x,x')\in \overrightarrow{E}\Sigma$ as a path from $x$ to $x'$, and let $\widetilde{e}\in \Pi_1(\Sigma)$ be the element represented by this path. Let $\widetilde{\Sigma}^1$ be the Borel graph on $Y$ defined by
\begin{equation*}
    \overrightarrow{E}\widetilde{\Sigma}^1=\{(y,y')\in\widetilde{\mathcal{R}}\mid \rho(y,y')=\widetilde{e},\ \textup{where }e=(\pi(y),\pi(y'))\in \overrightarrow{E}\Sigma\}.
\end{equation*}
Now let $\widetilde{\Sigma}$ be the $2$-dimensional Borel simplicial complex on $Y$ such that the $1$-skeleton of $\widetilde{\Sigma}$ is $\widetilde{\Sigma}^1$ and
\begin{equation*}
    F\widetilde{\Sigma}=\{\{y_1,y_2,y_3\}\subset Y\mid \{y_i,y_j\}\in E\widetilde{\Sigma} \textup{ if }i\neq j,\ \pi(\{y_1,y_2,y_3\})\in F\Sigma\}.
\end{equation*}
Then $\pi:(Y,\nu,\widehat{\Sigma})\to (X,\mu,\Sigma)$ is a covering factor map, and for almost every $y\in Y$, the map $\pi|_{\widetilde{\Sigma}_y}:\widetilde{\Sigma}_y\to \Sigma_{\pi(y)}$ is the universal covering map.

\end{exm}

\section{Tree decompositions}\label{sec3}

\subsection{Basic theory}\label{sec3.1}
Following \cite{J}, we review tree decompositions of graphs and their applications to Borel graphs.

First, let $G$ be a connected graph.

\begin{dfn}
A \textit{tree decomposition} of $G$ is a pair a tree $T$ and a family $\{V_t\}_{t\in VT}$ of subsets of $VG$ such that
\begin{enumerate}
    \item $VG=\bigcup_{t\in VT}V_t$,
    \item for every $\{u,v\}\in EG$, there exists $t\in VT$ such that $\{u,v\}\subset V_t$, and
    \item if $t_1,t_2,t_3\in VT$ satisfy $d_T(t_1,t_2)=d_T(t_1,t_3)+d_T(t_3,t_2)$, then $V_{t_1}\cap V_{t_2}\subset V_{t_3}$.
\end{enumerate}
For $\{t_1,t_2\}\in ET$, the set $V_{t_1}\cap V_{t_2}$ is called an \textit{adhesion} of the tree decomposition $(T,\{V_t\}_{t\in VT})$. This tree decomposition is said to be \textit{proper} if $\{V_t\}_{t\in VT}$ is pairwise distinct.

A tree decomposition $(T,\{V_t\}_{t\in VT})$ is sometimes referred as just $T$ if $\{V\}_{t\in VT}$ is already given.
\end{dfn}

\begin{dfn}
A \textit{separation} of $G$ is a pair $\{A_1,A_2\}$ of non-empty proper subsets of $VG$ such that $VG=A_1\cup A_2$, $|A_1\cap A_2|<\infty$ and $\partial_\mathrm{iv}^GA_1,\partial_\mathrm{iv}^GA_2 \subset A_1\cap A_2$. If $|A_1\cap A_2|=n$, then $(A_1,A_2)$ is called an $n$-\textit{separation}. An \textit{ordered separation} of $G$ is a ordered pair $(A_1,A_2)$ such that $\{A_1,A_2\}$ is a separation of $G$. 

Let $\mathrm{Sep}(G)$ ($\mathrm{Sep}^*(G)$, resp.) denote the sets of separations (ordered separations, resp.) of $G$. The order $\leq$ on $\mathrm{Sep}^*(G)$ is defined by $(A_1,A_2)\leq (B_1,B_2)$ if and only if $A_1\subset B_1$ and $B_2\subset A_2$. Two separations $\{A_1,A_2\}$ and $\{B_1,B_2\}$ are said to be \textit{nested} if $(A_i,A_j)\leq (B_k,B_l)$ for some $\{i,j\}=\{k,l\}=\{1,2\}$.

A \textit{separation system} of $G$ is a pairwise-nested family $\mathcal{S}\subset \mathrm{Sep}(G)$ such that
if two elements $\{A_1,A_2\}$ and $\{B_1,B_2\}$ of $\mathcal{S}$ satisfies $(A_1,A_2)\lneq (B_1,B_2)$, then there exist finitely many $\{C_1,C_2\}\in\mathcal{S}$ such that $(A_1,A_2)\lneq (C_1,C_2)\lneq (B_1,B_2)$.
\end{dfn}

\begin{lem}[{\cite[Lemma 2.3]{CHM}}] \label{lem20}
Let $\mathcal{S}\subset\mathrm{Sep}(G)$ be a pairwise-nested family. If $\mathcal{S}$ satisfies $\sup_{\{A_1,A_2\}\in\mathcal{S}}|A_1\cap A_2|<\infty$, then $\mathcal{S}$ is a separation system of $G$.
\end{lem}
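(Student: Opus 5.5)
To prove Lemma \ref{lem20}, fix two elements of $\mathcal{S}$ with $(A_1,A_2)\lneq (B_1,B_2)$ and set $n=\sup_{\{A_1,A_2\}\in\mathcal{S}}|A_1\cap A_2|$. We must show that only finitely many $\{C_1,C_2\}\in\mathcal{S}$ satisfy $(A_1,A_2)\lneq(C_1,C_2)\lneq(B_1,B_2)$.

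First, since $A_2$ and $B_1$ are proper subsets and $A_1\cup A_2=B_1\cup B_2=VG$, we can pick $a\in A_1\setminus A_2$ and $b\in B_2\setminus B_1$. For every such intermediate $(C_1,C_2)$ we have $a\in C_1\setminus C_2$ and $b\in C_2\setminus C_1$. Fix a path $P$ in $G$ from $a$ to $b$; this uses that $G$ is connected. An edge from $C_1\setminus C_2$ to $VG\setminus C_1$ would have an endpoint in $\partial^G_{\mathrm{iv}}C_1\subset C_1\cap C_2$, which is impossible. Hence $P$ meets the separator $S_C=C_1\cap C_2$.

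Second, the intermediate separations, in this orientation, form a chain. Nestedness of $C$ and $D$ with a swapped orientation would give either $C_1\subset D_2$, forcing $a\in D_2\subset A_2$, or $C_2\subset D_1$, forcing $b\in D_1\subset B_1$; both are contradictions. Along the chain, write $X_C=C_1\setminus C_2$ and $Y_C=C_2\setminus C_1$. Then $X_C$ increases, $Y_C$ decreases, and a separation is determined by the pair $(X_C,Y_C)$. Consequently, for each vertex $v$, the set of chain members whose separator contains $v$ is an interval of the chain.

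Third, suppose there are infinitely many intermediate separations. Then the chain contains an infinite strictly monotone sequence. Since every $S_C$ meets the finite set $VP$, and each vertex of $VP$ lies in the separators of an interval of the chain, some $v\in VP$ lies in $S_C$ for infinitely many members of this sequence. We then induct on $n$, removing $v$.

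If infinitely many of these have $S_C=\{v\}$, then $X_C$ and $Y_C$ are unions of components of $G-v$. Every component of $G-v$ contains a neighbour of $v$, and $v$ has finitely many neighbours, so $G-v$ has finitely many components. Hence there are only finitely many possibilities for $(X_C,Y_C)$, which contradicts strict monotonicity.

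Otherwise, pass to $G-v$, where the sets $C_i\setminus\{v\}$ give a nested chain with separators of size at most $n-1$. By local finiteness, we may pass to a subsequence on which each neighbour of $v$ has constant status ($X$, $Y$ or separator). To make the induction go through without connectivity of $G-v$, I would state the induction hypothesis in the following form: in a locally finite graph, there is no infinite strictly monotone chain of separations with separators of size at most $n$ all meeting a fixed finite set $F$. When passing to $G-v$, I would replace $F$ by $(F\cup \mathrm{N}_G(v))\setminus\{v\}$, and show that $S_C\setminus\{v\}$ still meets this set.

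The main obstacle is exactly this last point: guaranteeing that the smaller separators still meet a fixed finite set after deleting $v$. My first attempt is to re-run the path argument inside the component of $G-v$ that contains the changing parts of $X_C$ and $Y_C$. Strict monotonicity together with the stabilised statuses of $\mathrm{N}_G(v)$ should force some component of $G-v$, attached to $v$ through a fixed neighbour, to contain points of both a late $X_C$ and an early $Y_C$. A fixed path in that component then plays the role of $P$.
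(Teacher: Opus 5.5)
\textbf{The gap is in the step you flag as the main obstacle, and it cannot be closed.} With this paper's definition of separation, which imposes no tightness condition, the statement as written is false, and so is your proposed induction hypothesis.

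Let $G$ be the two-way infinite path on $\mathbb{Z}$. Put $(A_1,A_2)=(\{i\le 0\},\{i\ge 0\})$, and for $k\ge 2$ put $(C^k_1,C^k_2)=(\{i\le 0\}\cup\{i\ge k\},\{0,1,\dots,k\})$.
\begin{itemize}
\item Each $\{C^k_1,C^k_2\}$ is a separation with separator $\{0,k\}$, since no edge joins $\{i<0\}\cup\{i>k\}$ to $\{1,\dots,k-1\}$.
\item The whole family is a chain, hence pairwise nested, and every order is at most $2$.
\item For every $k\ge 3$ we have $(A_1,A_2)\lneq(C^k_1,C^k_2)\lneq(C^2_1,C^2_2)$, so infinitely many members lie between two fixed ones.
\end{itemize}
Your first steps go through, with $a=-1$, $b=1$, $VP=\{-1,0,1\}$ and $v=0$. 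After deleting $0$, however, the residual separators $\{k\}$ escape to infinity. They meet no fixed finite set, in particular not $(VP\cup\mathrm{N}_G(0))\setminus\{0\}=\{-1,1\}$. Moreover, a vertex $j$ of the component $\{i>0\}$ of $G-0$ lies in $C^k_1\setminus C^k_2$ only for $k<j$. So no pair of vertices in one component of $G-v$ is separated by infinitely many members, and your ``fixed path in a component'' heuristic has nothing to use. Taking $F=\{0\}$, the same chain refutes your inductive statement for $n=2$, even in a connected graph.

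\textbf{The missing ingredient is tightness.} This is the standard hypothesis in finiteness results of this kind, and it does hold where the paper applies the lemma: for a track $C$ the sides $A_{C,1},A_{C,2}$ are connected, and every vertex of $\tau(C)$ is adjacent in $\Sigma'$ to both of them. Call $\{C_1,C_2\}$ tight if, writing $S_C=C_1\cap C_2$, the graph $G-S_C$ has components $K_1\subseteq C_1\setminus C_2$ and $K_2\subseteq C_2\setminus C_1$ such that every vertex of $S_C$ has a neighbour in each.

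Under tightness your induction works, provided you carry a separated pair of vertices instead of a finite set to be met. The induction hypothesis is: in a connected locally finite graph, two given vertices are separated by only finitely many tight separators of order at most $m$. The step goes as follows.
\begin{itemize}
\item For $x\in S_C\cap VP$, pick neighbours $u'\in K_1$ and $v'\in K_2$ of $x$; there are finitely many choices.
\item If $S_C\neq\{x\}$, each $y\in S_C\setminus\{x\}$ links $K_1$ to $K_2$. Hence $u',v'$ lie in one component $H$ of $G-x$, and $S_C\setminus\{x\}$ is a tight separator of $H$ of smaller order separating $u'$ from $v'$.
\item So only finitely many separators $S_C$ occur. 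Since $G$ is connected and locally finite, $G-S_C$ has finitely many components, so each separator carries only finitely many separations.
\end{itemize}
Nestedness is not even needed. The paper gives no argument of its own here and only cites [CHM, Lemma 2.3]. The lemma, and your proof, should therefore be stated with such a tightness restriction.
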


\begin{prop}[{\cite[Section 3]{CHM}}]
Let $(T,\{V_t\}_{t\in VT})$ be a proper tree decomposition of $G$ with finite adhesions. For each edge $e=\{t_1,t_2\}\in {ET}$, let
\begin{align*}
    &A_{e,1}=\bigcup\{V_t\mid t\in VT,\ d_T(t,t_1)<d_T(t,t_2)\} \textup{ and }\\
    &A_{e,2}=\bigcup\{V_t\mid t\in VT,\ d_T(t,t_2)<d_T(t,t_1)\}.
\end{align*}
Then the family $\mathcal{S}_T=\{\{A_{e,1},A_{e,2}\}\mid e\in {ET}\}$ is a separation system of $G$, and the map $e\in ET\mapsto \{A_{e,1},A_{e,2}\}\in\mathcal{S}_T$ is bijective.

Conversely, for every separation system $\mathcal{S}$ of $G$, there exists is a unique proper tree decomposition $(T,\{V_t\}_{t\in VT})$ of $G$ with finite adhesions which induces $\mathcal{S}$. This is called the tree decomposition of $G$ associated with the separation system $\mathcal{S}$.
\end{prop}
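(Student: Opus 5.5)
We prove the two directions separately. The forward direction is a direct verification from the axioms of a tree decomposition. The converse is a Dunwoody-style construction of a tree from a nested family of separations, followed by a uniqueness check.

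\emph{Forward direction.} Fix a proper tree decomposition $(T,\{V_t\})$ with finite adhesions and an edge $e=\{t_1,t_2\}$. Every $t\in VT$ is strictly closer to exactly one of $t_1,t_2$, so $A_{e,1}\cup A_{e,2}=VG$. By axiom (iii), a vertex of $VG$ lying in bags on both sides of $e$ lies in $V_{t_1}\cap V_{t_2}$. Hence $A_{e,1}\cap A_{e,2}=V_{t_1}\cap V_{t_2}$, which is finite.
\begin{itemize}
\item \emph{Boundary condition.} If $u\in A_{e,1}$ has a neighbour $v\notin A_{e,1}$, then by (ii) the edge $\{u,v\}$ lies in some bag, which must be on the $t_2$ side. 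Thus $u\in A_{e,2}$.
\item \emph{Non-triviality.} Properness together with (iii) gives $V_{t_1}\not\subset V_{t_2}$ and vice versa, so both sides are non-empty proper subsets.
\item \emph{Nestedness.} For edges $e\neq f$ of $T$, orient $e$ towards $f$ and $f$ away from $e$. Then the union of bags behind $e$ is contained in the union of bags behind $f$, giving $(A_{e,i},A_{e,j})\le(A_{f,k},A_{f,l})$.
\item \emph{Finite intervals.} If $(A_{e,1},A_{e,2})\lneq(C_1,C_2)\lneq(B_1,B_2)$ with $C=\mathcal S_T(g)$, then $g$ must lie on the geodesic of $T$ joining $e$ to $f$, and there are only finitely many such edges.
\item \emph{Injectivity.} Distinct edges give distinct separations: otherwise two distinct bags would be forced to coincide, contradicting properness.
\end{itemize}

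\emph{Converse.} Let $\vec{\mathcal S}$ be the set of both orientations of the elements of $\mathcal S$, and write $s^*$ for the reverse of $s$. Define $s\sim r$ if $s=r$, or if $s\lneq r^*$ and no element of $\vec{\mathcal S}$ lies strictly between $s$ and $r^*$.
\begin{enumerate}
\item Show that $\sim$ is an equivalence relation. This uses nestedness: two distinct predecessors of $r^*$ that are both maximal are incomparable in the appropriate sense, and nestedness then forces the relation $s\lneq t^*$ between them.
\item Define $VT=\vec{\mathcal S}/\!\sim$, with one edge joining $[s]$ and $[s^*]$ for each $s\in\mathcal S$.
\item Show $T$ is connected. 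Between $s$ and $r$ there is a finite chain of covering relations, since $\mathcal S$ is a separation system, and this chain yields a path in $T$.
\item Show $T$ is acyclic. A cycle would give a cyclically ordered sequence $s_1\lneq s_2\lneq\cdots\lneq s_1$ in $\vec{\mathcal S}$, contradicting antisymmetry.
\item Put $V_t=\bigcap_{(A_1,A_2)\in t}A_2$.
\end{enumerate}

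\emph{Verifying the axioms.} Axiom (iii) follows because the bags along a geodesic of $T$ are cut out by the nested separations crossed along it. Axiom (ii) follows from the boundary condition $\partial_{\mathrm{iv}}A_i\subset A_1\cap A_2$, which prevents an edge from being split. Uniqueness holds because the forward construction, applied to this $T$, recovers $\mathcal S$ bijectively on edges, and conversely any proper decomposition inducing $\mathcal S$ has its edges in bijection with $\mathcal S$ with compatible adjacency.

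\emph{Main obstacle.} I expect axiom (i), $VG=\bigcup_t V_t$, to be the hardest step. For each $x\in VG$, orient every separation not having $x$ in its adhesion so that $x$ lies on its big side. We need a $\le$-maximal such oriented separation to exist, whose class is then a bag containing $x$. The danger is an infinite increasing chain $s_1\lneq s_2\lneq\cdots$ with $x$ always on the big side. First I would try to rule this out by combining local finiteness at $x$ with finiteness of intervals: intervals are finite, so $x$ in the adhesion of infinitely many $s_n$ cannot be forced, and so on. If this fails, I would use the finiteness of adhesions together with the bounded-separation hypothesis, in the form of Lemma~\ref{lem20}, to show that such a chain cannot exclude $x$ forever. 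Failing that, I would follow the argument of \cite[Section 3]{CHM} for this point.
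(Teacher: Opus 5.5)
The paper gives no proof of this proposition; it only quotes it from Carmesin--Hamann--Miraftab. Judged on its own, your proposal has a genuine gap exactly where you expected one, and it cannot be closed: with the definitions used here, the converse is false.

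\textbf{Counterexample to the converse.} Take the ray $x,z,r_1,r_2,\dots$ and, for $n\ge 2$, set $A_n=\{z,r_1,\dots,r_n\}$ and $B_n=\{x,z,r_n,r_{n+1},\dots\}$.
\begin{itemize}
\item Each $\{A_n,B_n\}$ is a $2$-separation with adhesion $\{z,r_n\}$.
\item We have $(A_2,B_2)\lneq(A_3,B_3)\lneq\cdots$, so $\mathcal{S}=\{\{A_n,B_n\}\}_{n\ge2}$ is pairwise nested, has finite intervals and has bounded adhesions.
\end{itemize}
So this example satisfies every hypothesis you planned to exploit: local finiteness, finite intervals, and bounded adhesion as in Lemma~\ref{lem20}.

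Your construction yields the ray $t_1t_2t_3\cdots$ with $t_1=\{(B_2,A_2)\}$ and $t_n=\{(A_n,B_n),(B_{n+1},A_{n+1})\}$. The bags are $V_{t_1}=A_2$ and $V_{t_n}=B_n\cap A_{n+1}=\{z,r_n,r_{n+1}\}$. The vertex $x$ and the edge $xz$ lie in no bag, so axioms (i) and (ii) both fail. Your argument for (ii) via the boundary condition has the same defect: it places an edge on one side of each separation separately, but gives no consistent choice forming a node.

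In fact no tree decomposition at all induces $\mathcal{S}$. A node whose bag contains $x$ lies on the $B_n$-side of the edge inducing $\{A_n,B_n\}$, and a node whose bag contains $r_1$ lies on the $A_n$-side. So infinitely many edges would lie on one finite geodesic.

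\textbf{What is missing.} You need an extra hypothesis, for example: for every vertex $v$, no infinite strictly increasing chain in $\vec{\mathcal S}$ has $v\in A_2\setminus A_1$ for all its members $(A_1,A_2)$. With it, (i) goes through:
\begin{itemize}
\item If $v$ lies in some adhesion, then an endpoint of that edge of $T$ has $v$ in its bag.
\item Otherwise, orient every edge of $T$ towards the side strictly containing $v$. Nestedness gives each node out-degree at most one. Following outgoing edges produces an increasing chain, which stops by the hypothesis, and the terminal node's bag contains $v$.
\end{itemize}
Axiom (ii) follows in the same way. For the track-induced systems of Section~\ref{sec4} this hypothesis does hold, because a connected locally finite graph has only finitely many minimal cuts of bounded size containing a given edge.

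\textbf{Problems in the forward direction.} Two steps are false as written, because ``proper'' here only means that the bags are pairwise distinct.
\begin{itemize}
\item Properness and (iii) do not give $V_{t_1}\not\subset V_{t_2}$. Let $G$ be a single edge $uv$ and $T$ a single edge with bags $\{u,v\}$ and $\{u\}$. Then $A_{e,1}=VG$, so $\{A_{e,1},A_{e,2}\}$ is not a separation.
\item Injectivity fails. Let $G$ be the path $a\,c\,b$ and $T$ the path $t_1t_2t_3$ with bags $\{a,c\},\{c\},\{c,b\}$. Both edges of $T$ induce $\{\{a,c\},\{c,b\}\}$.
\end{itemize}
If two edges induce the same separation, all one can derive is that a bag between them is contained in an adjacent bag, not that two bags coincide.

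Both steps go through once properness is strengthened to ``no bag is contained in an adjacent bag''; a vertex of $V_{t_2}\setminus V_{t_1}$ then lies in $A_{e,2}\setminus A_{e,1}$. The same strengthening is needed for the order-correspondence between oriented edges of $T$ and oriented separations, which your finite-interval step uses tacitly. Presumably the cited source assumes this stronger properness together with a condition like the one above.
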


Now let $(X,G)$ be a Borel graph. A separation of $G$ means a separation of a component of $G$. Since a separation $(A_1,A_2)$ of $G$ is determined by only by $A_1\cap A_2$, $\partial_\mathrm{ov}^GA_1$ and $\partial_\mathrm{ov}^GA_2$, the set of separations of $G$ admits a standard Borel space structure, which is denoted by $\mathrm{Sep}(G)$.

\begin{dfn}
Let $\{G_\lambda\}_{\lambda\in\Lambda}$ be the set of components of $G$. A \textit{Borel tree decomposition} of $G$ is a union $(T,\{V_t\}_{t\in VT})=(\bigsqcup_{\lambda\in\Lambda}T_\lambda,\bigsqcup_{\lambda\in\Lambda}\{V_t\}_{t\in VT_\lambda})$ of tree decompositions $(T_\lambda,\{V_t\}_{t\in VT_\lambda})$ of $G_\lambda$ with finite adhesions, such that the union $\bigsqcup_{\lambda\in\Lambda}\mathcal{S}_\lambda$ of the separation systems $\mathcal{S}_\lambda$ induced by $T_\lambda$ is a Borel subset of $\mathrm{Sep}(G)$.
\end{dfn}

Let $(T,\{V_t\}_{t\in VT)}$ be a Borel tree decomposition. Define
\begin{align*}
    &Y_T=\{(x,t)\in X\times VT\mid x\in V_t\}, \\
    &\mathcal{P}_T=\{((x,t),(y,t'))\in Y_T\times Y_T\mid (x,y)\in\mathcal{R}_G\}\textup{ and }\\
    &\mathcal{Q}_T=\{((x,t),(y,t'))\in Y_T\times Y_T\mid t=t'\} \subset \mathcal{P}_T.
\end{align*}

Note that $V_T$ is not a standard Borel space in general.
However, $Y_T$ admits a standard Borel space structure such that the projection $\mathrm{proj}_X:Y_T\to X$ is a Borel map, and $\mathcal{P}_T$ and $\mathcal{Q}_T$ are countable Borel equivalence relations on $Y_T$ (\cite[Proposition 3.1]{J}). Then we can take a Borel section $\iota:X\to Y_T$ of $\mathrm{proj}_X$, which gives a Borel embedding $\iota:(X,\mathcal{R}_G)\to(Y_T,\mathcal{P}_T)$.

Note that $ET$ admits a structure of a standard Borel space since it is identified with the Borel subset of $\mathrm{Sep}(G)$ induced by $T$. We can take a Borel map $e:ET\to X$ so that $e(\{t,t'\})\in V_t\cap V_{t'}$ for every $\{t,t'\}\in ET$ since every adhesion is finite. Let $\mathcal{T}$ be the equivalence relation on $Y_T$ generated by the Borel graph $T_1$ defined by
\begin{equation}
    ET_1=\{\{(e(\{t,t'\}),t),(e(\{t,t'\}),t')\}\subset Y_T\mid \{t,t'\}\in ET\}. \label{eq6}
\end{equation}
Note that each component of $T_1$ is isomorphic to a subgraph of $T$, and thus it is a tree. Hence, the relation $\mathcal{T}$ is treeable. By construction, the following holds:

\begin{lem}
We have $\mathcal{P}_T=\mathcal{Q}_T\ast \mathcal{T}$.
\end{lem}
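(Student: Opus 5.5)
We need to show $\mathcal{P}_T=\mathcal{Q}_T\ast\mathcal{T}$. Two things have to be checked. First, $\mathcal{Q}_T$ and $\mathcal{T}$ must be freely intersecting. Second, the equivalence relation they generate must be exactly $\mathcal{P}_T$.

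\emph{Generation.} Both $\mathcal{Q}_T$ and $\mathcal{T}$ are contained in $\mathcal{P}_T$. For $\mathcal{T}$ this holds because each edge of $T_1$ joins $(e(\{t,t'\}),t)$ and $(e(\{t,t'\}),t')$, which have the same $X$-coordinate. For the converse, take $((x,t),(y,t'))\in\mathcal{P}_T$. Since $x,y$ lie in the same component and the decomposition of that component is a tree $T_\lambda$, $t$ and $t'$ lie in $T_\lambda$; let $t=t_0,t_1,\dots,t_m=t'$ be the $T$-geodesic. Write $z_i=e(\{t_{i-1},t_i\})\in V_{t_{i-1}}\cap V_{t_i}$. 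We then move
\[
(x,t_0)\sim_{\mathcal{Q}}(z_1,t_0)\sim_{\mathcal{T}}(z_1,t_1)\sim_{\mathcal{Q}}(z_2,t_1)\sim_{\mathcal{T}}\cdots\sim_{\mathcal{T}}(z_m,t_m)\sim_{\mathcal{Q}}(y,t_m).
\]
This chain lies in $Y_T$ and shows that $\mathcal{P}_T$ is contained in the join.

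\emph{Free intersection.} This is the main step. Suppose there is a cycle $y_1,\dots,y_n=y_1$ with consecutive points distinct, alternating between $\mathcal{Q}_T$-steps and $\mathcal{T}$-steps, where runs of the same relation are merged into a single step. Write $y_k=(x_k,t_k)$.
\begin{itemize}
\item A $\mathcal{Q}_T$-step keeps $t$ fixed.
\item A nontrivial $\mathcal{T}$-step follows a reduced path in the tree $T_1$, using edges in $ET_1$. Its $T$-projection is therefore a nonbacktracking walk in $T$ through edges $\{t,t'\}$ with distinct consecutive edges, since the $T_1$-path is reduced and the map from $ET_1$ to $ET$ is injective.
\end{itemize}
Project the whole cycle to $T$, deleting the $\mathcal{Q}$-steps, which are stationary. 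The result is a closed walk in $T$. Because $T$ is a tree, this walk must backtrack somewhere. A backtrack inside a single $\mathcal{T}$-step is impossible, as just noted.

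So the backtrack must occur across a $\mathcal{Q}$-step. The configuration is: a $\mathcal{T}$-step ends by traversing the $T$-edge $\{t',t\}$ into $(e(\{t,t'\}),t)$; a nontrivial $\mathcal{Q}$-step then moves to some $(w,t)$; and the next $\mathcal{T}$-step begins by traversing the same $T$-edge $\{t,t'\}$. The only $T_1$-edge lying over $\{t,t'\}$ at node $t$ starts at $(e(\{t,t'\}),t)$. Hence $w=e(\{t,t'\})$, which contradicts the $\mathcal{Q}$-step being nontrivial.

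The degenerate case, where the projected walk is empty, means the cycle is a single $\mathcal{Q}$-class. This is excluded, because the definition requires consecutive steps to come from different relations.

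The delicate point is bookkeeping: verifying that $T_1$-paths project injectively to $T$-walks. This holds because, for each edge of $T$, there is exactly one $T_1$-edge lying over it, determined by the Borel choice $e$.
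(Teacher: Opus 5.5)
Your proof is correct, and it follows the reasoning the paper leaves to ``by construction''. Generation comes from walking along the $T$-geodesic through the chosen adhesion points $e(\{t_{i-1},t_i\})$, and free intersection comes from $T_1$ mapping edge-injectively onto $T$: an alternating cycle would project to a closed walk in $T$, whose forced backtrack (always available at an interior junction for a closed walk in a tree) would make the intervening $\mathcal{Q}_T$-step trivial. Your degenerate case is simply $n\le 2$, since no $\mathcal{T}$-step means at most one step and $y_1\neq y_2$.
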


Since treeability is preserved by taking free products, we have the following:

\begin{thm}[{\cite[Theorem 3.4]{J}}]\label{thmJ}
If $\mathcal{Q}_T$ is (measure-)treeable, then so is $\mathcal{R}_G$.
\end{thm}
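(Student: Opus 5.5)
We need to prove Theorem \ref{thmJ}: if $\mathcal{Q}_T$ is (measure-)treeable, then so is $\mathcal{R}_G$.

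The plan is to combine the free product decomposition $\mathcal{P}_T=\mathcal{Q}_T\ast\mathcal{T}$ from the preceding lemma with the treeability of $\mathcal{T}$, and then pull treeability back to $\mathcal{R}_G$ along the Borel embedding $\iota:(X,\mathcal{R}_G)\to(Y_T,\mathcal{P}_T)$.

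\textbf{Step 1: the Borel case.} Suppose $\mathcal{Q}_T$ is treeable. The relation $\mathcal{T}$ is treeable via the acyclic Borel graph $T_1$ of \eqref{eq6}. By Remark \ref{lem13}, each of $\mathcal{Q}_T$ and $\mathcal{T}$ is a free product of finite Borel equivalence relations. Substituting these decompositions into $\mathcal{P}_T=\mathcal{Q}_T\ast\mathcal{T}$ (free products are associative, since the free-intersection condition refines), $\mathcal{P}_T$ is itself a free product of finite relations, hence treeable. Alternatively, take the union of a treeing of $\mathcal{Q}_T$ with $T_1$: the free-intersection condition says exactly that this union has no cycles.

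\textbf{Step 2: restriction.} Treeability passes to restrictions to Borel subsets, by the standard fact (Jackson--Kechris--Louveau) that if $\mathcal{P}$ is treeable then so is $\mathcal{P}|_B$ for any Borel $B$. Apply this to $B=\iota(X)$. Since $\iota(x)\in[\iota(y)]_{\mathcal{P}_T}$ exactly when $(x,y)\in\mathcal{R}_G$, we have $\mathcal{P}_T|_{\iota(X)}\cong\mathcal{R}_G$ via $\iota$. So $\mathcal{R}_G$ is treeable. To avoid citing JKL, one can argue directly: in each class, contract the tree along a Borel choice of nearest $\iota(X)$-point, using the local finiteness of the treeing.

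\textbf{Step 3: the measure case.} Suppose $\mathcal{Q}_T$ is measure-treeable, and fix a standard measure $\mu$ on $X$. Push it forward to $\nu=\iota_*\mu$. Replace $\nu$ by an equivalent measure $\nu'$ on $Y_T$ with the same null sets on the saturation $[\iota(X)]_{\mathcal{P}_T}$, for instance $\sum_n 2^{-n}(\gamma_n)_*\nu$ over a countable family of partial Borel maps generating $\mathcal{P}_T$ (Feldman--Moore). This makes $\nu'$ quasi-invariant. Measure-treeability of $\mathcal{Q}_T$ then gives a $\nu'$-conull set $Y'$ on which $\mathcal{Q}_T$ is treeable. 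Shrink $Y'$ to an invariant conull set by removing the saturation of the null complement under $\mathcal{P}_T$; this set is still null by quasi-invariance, and it is invariant under both $\mathcal{Q}_T$ and $\mathcal{T}$. Steps 1--2 then apply on this set and give a treeing of $\mathcal{R}_G$ on $\iota^{-1}(Y')$, which is $\mu$-conull.

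\textbf{Main obstacle.} The expected difficulty is Step 2, the restriction of treeability to a complete section, especially handling it in a Borel way; I would cite JKL. A minor subtlety is checking that the free-intersection condition really yields acyclicity of the combined graph, including edges that both lie inside a single $\mathcal{Q}_T$-class.
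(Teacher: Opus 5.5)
Your proposal is correct and follows the paper's route. The paper obtains the statement in one line from $\mathcal{P}_T=\mathcal{Q}_T\ast\mathcal{T}$, the treeability of $\mathcal{T}$ via $T_1$, stability of treeability under free products, and the Borel embedding $\iota:(X,\mathcal{R}_G)\to(Y_T,\mathcal{P}_T)$. Your Step~3 correctly supplies the measure-case details that the paper leaves implicit: pass to a $\mathcal{P}_T$-quasi-invariant measure, then to an invariant conull set. One small fix there: first replace $\mu$ by an equivalent probability measure, so that $\nu'$ is finite.

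Drop the aside in Step~2 about avoiding JKL ``using the local finiteness of the treeing''. The treeing of $\mathcal{P}_T$ you build, an arbitrary treeing of $\mathcal{Q}_T$ together with $T_1$, need not be locally finite. So rely on the JKL fact that $\mathcal{P}_T|_{\iota(X)}\cong\mathcal{R}_G$ inherits treeability.
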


Now suppose that $G$ is pmp on $(X,\mu)$.   
By Remark \ref{rmk1}, there exists a unique $\mathcal{P}_T$-invariant $\sigma$-finite measure $\nu_T$ on $Y_T$ such that $\iota:(X,\mu,\mathcal{R}_G)\to(Y_T,\nu_T,\mathcal{P}_T)$ is an embedding. Then by Lemma \ref{lem14}:

\begin{lem}
If $(Y_T,\nu_T,\mathcal{Q}_T)$ is sofic, then so is $(X,\mu,\mathcal{R}_G)$.
\end{lem}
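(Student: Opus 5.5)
The statement asks that soficity of $(X,\mu,\mathcal{R}_G)$ follow from soficity of $(Y_T,\nu_T,\mathcal{Q}_T)$. I would combine the free product decomposition $\mathcal{P}_T=\mathcal{Q}_T\ast\mathcal{T}$ with Proposition \ref{lem14}, the soficity of treeable relations, and the embedding part of the lemma on extensions and embeddings. Since $\nu_T$ may be infinite, I would first deal with the measure normalization.

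\textbf{Step 1: the factor $\mathcal{T}$ is sofic.} $\mathcal{T}$ is generated by the acyclic Borel graph $T_1$, so it is treeable. Hence $\mathcal{T}|_A$ is treeable for every Borel $A\subset Y_T$ with $0<\nu_T(A)<\infty$, and after normalization it is sofic by Proposition \ref{prop2}. By the definition of soficity for infinite measures, $\mathcal{T}$ is $\nu_T$-sofic.

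\textbf{Step 2: the free product $\mathcal{P}_T$ is sofic.} By hypothesis $\mathcal{Q}_T$ is $\nu_T$-sofic, and $\mathcal{P}_T=\mathcal{Q}_T\ast\mathcal{T}$ is measure-preserving for $\nu_T$. Proposition \ref{lem14} then gives that $\mathcal{P}_T$ is $\nu_T$-sofic. If one only wants to apply that proposition to finite measures, there is a natural reduction. Set $A=\iota(X)$, which has $\nu_T(A)=\mu(X)=1$, and let $Z=[A]_{\mathcal{P}_T}$, which supports $\nu_T$. Each $\mathcal{P}_T$-class meets $A$, and $\nu_T$ is $\sigma$-finite on $Z$, so $Z$ can be exhausted by an increasing sequence of finite-measure $\mathcal{P}_T$-complete sections. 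To check soficity on a given finite-measure $B$, enlarge $B$ to such a section. Soficity passes to restrictions, because $(B,\mathcal{R}|_B)$ embeds into $(B',\mathcal{R}|_{B'})$ for $B\subset B'$, and part (ii) of the lemma applies.

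\textbf{Step 3: pull back to $X$.} The section $\iota:(X,\mu,\mathcal{R}_G)\to(Y_T,\nu_T,\mathcal{P}_T)$ is an embedding onto $A$, where $\nu_T(A)=1<\infty$. Restricting, $\mathcal{P}_T|_A$ is $\nu_T|_A$-sofic, either directly from the definition in the infinite-measure case or from Step 2. Part (ii) of the lemma on extensions and embeddings then shows that $\mathcal{R}_G$ is $\mu$-sofic.

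\textbf{Main obstacle.} The only delicate point is that Proposition \ref{lem14} is stated for measure-preserving relations and $\nu_T$ may be infinite. Everything else is formal. I would cite \cite[Lemma 7.2]{DKP1} via the $\sigma$-finite version of the definition. Alternatively, I would verify directly that restricting a free product to a complete section of finite measure, and then renormalizing, preserves the applicability of the proposition, using that soficity of a $\sigma$-finite relation is tested on finite-measure subsets.
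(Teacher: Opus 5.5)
Your proposal is correct and is essentially the paper's argument: $\mathcal{P}_T=\mathcal{Q}_T\ast\mathcal{T}$ with $\mathcal{T}$ treeable (hence sofic by Proposition~\ref{prop2}), so $\mathcal{P}_T$ is $\nu_T$-sofic by Proposition~\ref{lem14}, and the embedding $\iota$ together with part (ii) of the extension/embedding lemma gives soficity of $\mathcal{R}_G$. The infinite-measure ``obstacle'' is not actually one, since Section~\ref{sec2} works with $\sigma$-finite $(X,\mu)$ and Proposition~\ref{lem14} is stated in that generality. Your alternative complete-section reduction is best dropped anyway: $\mathcal{P}_T|_{B'}$ is not the free product of $\mathcal{Q}_T|_{B'}$ and $\mathcal{T}|_{B'}$, so that reduction would not by itself let you invoke the proposition.
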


\subsection{Torsos of tree decompositions}

Let $G$ be a connected graph and $(T,\mathcal{V}=\{V_t\}_{t\in VT})$ a tree decomposition of $G$.

\begin{dfn}
 For $t\in VT$, the \textit{torso} on $V_t$ is the graph 
\begin{equation*}
    H_t=G[V_t]\cup \bigcup_{\{t,t'\}\in ET} \{\{x,y\}\mid x,y\in V_t\cap V_{t'}\}.
\end{equation*}
\end{dfn}

\begin{lem}\label{lem6}
For every $t\in VT$, the torso on $V_t$ is connected.
\end{lem}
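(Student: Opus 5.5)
We prove that for every $t\in VT$ the torso $H_t$ is connected. Fix $t$ and two vertices $x,y\in V_t$. Since $G$ is connected, there is a $G$-path $p$ from $x$ to $y$. We modify $p$ into a path in $H_t$ by replacing every excursion of $p$ outside $V_t$ by a single torso edge.

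\textbf{Step 1: locating the excursions.} Write the vertices of $p$ as $x=v_0,v_1,\dots,v_m=y$. Consider a maximal subpath $v_i,v_{i+1},\dots,v_j$ with $0<i\le j<m$ and all $v_i,\dots,v_j\notin V_t$; then $v_{i-1},v_{j+1}\in V_t$. Removing $t$ from $T$ leaves components $T^{(t')}$, one for each neighbour $t'$ of $t$. Let $W_{t'}=\bigcup\{V_s\setminus V_t\mid s\in VT^{(t')}\}$.

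\textbf{Step 2: sets $W_{t'}$ are separated.} By condition (iii) of a tree decomposition, a vertex lying in $V_s$ and $V_{s'}$ for $s,s'$ in different components of $T\setminus\{t\}$ lies in $V_t$. Hence the sets $W_{t'}$ are pairwise disjoint and cover $VG\setminus V_t$. Moreover, no edge of $G$ joins $W_{t'}$ and $W_{t''}$ for $t'\neq t''$. Indeed, by (ii) such an edge lies in some $V_s$. If $s=t$ both ends are in $V_t$, a contradiction. Otherwise $s$ lies in a single component, so both ends lie in the same $W$.

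Consequently the excursion $v_i,\dots,v_j$ stays inside a single $W_{t'}$. The edge $\{v_{i-1},v_i\}$ lies in some $V_s$ with $s\in VT^{(t')}$, because it cannot lie in $V_t$ (as $v_i\notin V_t$), and if $s$ were in a different component then $v_i$ would belong to the corresponding $W$, not to $W_{t'}$. Then $v_{i-1}\in V_s\cap V_t$, and the $T$-path from $s$ to $t$ passes through $t'$, so (iii) gives $v_{i-1}\in V_{t'}\cap V_t$. The same argument gives $v_{j+1}\in V_{t'}\cap V_t$.

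\textbf{Step 3: conclusion.} By definition of $H_t$, either $v_{i-1}=v_{j+1}$ or $\{v_{i-1},v_{j+1}\}$ is an edge of $H_t$. Every edge of $p$ between two vertices of $V_t$ is an edge of $G[V_t]\subset H_t$. Replacing each excursion by the corresponding adhesion edge (or deleting it when $v_{i-1}=v_{j+1}$) therefore yields a path in $H_t$ from $x$ to $y$.

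The only delicate point is Step 2: confirming that both endpoints of an excursion lie in the \emph{same} adhesion $V_t\cap V_{t'}$, which is exactly where conditions (ii) and (iii) are used. The rest is bookkeeping. The argument does not need finiteness of adhesions or properness of the decomposition.
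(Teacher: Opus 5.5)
The paper states this lemma without proof, treating it as a standard fact about tree decompositions, so there is no argument in the paper to compare against. Your proof is correct and complete, and it is the standard argument. Step 2 correctly derives from conditions (ii) and (iii) that each adhesion $V_t\cap V_{t'}$ separates $W_{t'}$ from the rest of $G$, and both endpoints of every excursion lie in the same adhesion. Shortcutting each excursion by a single torso edge then yields an $H_t$-walk from $x$ to $y$.
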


\begin{rmk}
Lemma \ref{lem6} implies that if the induced graph of $G$ on every adhesion is connected, then $G[V_t]$ is connected for every $t\in VT$. 
\end{rmk}

\begin{dfn}
Let $n$ be positive integer. A connected graph is said to be $n$-\textit{connected} if it is connected after removing any $n-1$ vertices (and the incident edges).
\end{dfn}

\begin{exm}[\textit{Decomposition into $1$-blocks}]
Let $\mathcal{S}_1$ be the set of $1$-separations of $G$ that are nested with all $1$-separations of $G$. Then each torso of the tree decomposition associated with $\mathcal{S}_1$ is called a $1$-\textit{block} of $G$, which is either a single vertex, a single edge or a $2$-connected subgraph of $G$.
\end{exm}

\begin{exm}[\textit{Tutte decomposition}]
Suppose that $G$ is $2$-connected, and let $\mathcal{S}_2$ be the set of $2$-separations of $G$ that are nested with all $2$-separations of $G$. The tree decomposition associated with $\mathcal{S}_2$ is called the \textit{Tutte decomposition} of $G$. Every torso of Tutte decomposition is called a $2$-\textit{block} of $G$, which is either a single edge, a polygon or a $3$-connected graph.

Note that every torso of the Tutte decomposition of $G$ is a minor of $G$. In particular, if $G$ is planar, so is every torso. 
\end{exm}

Again let $(X,G)$ be a Borel graph and $(T,\{V_t\}_{t\in VT})$ be a Borel tree decomposition of $G$. Take $Y_T,\mathcal{P}_T$ and $\mathcal{Q}_T$ as in the last subsection. For each $t\in VT$, let $H_t$ be the torso of $T$ on $V_t$. Let $H$ be the Borel graph on $Y_T$ defined by
\begin{equation*}
    EH=\{\{(x,t),(y,t)\}\subset Y_T\mid  t\in VT,\ \{x,y\}\in EH_t\}.
\end{equation*}
Then $H$ generates the relation $\mathcal{Q}_T$. We call $H$ the \textit{torsos Borel graph} associated with the Borel tree decomposition $T$. By the arguments in the last subsection, we have the following:

\begin{lem}\label{lem16}
There exists a treeable relation $\mathcal{T}$ on $Y_T$ such that $(X,\mathcal{R}_G)$ is embedded into $(Y_T,\mathcal{R}_H\ast\mathcal{T})$.
\end{lem}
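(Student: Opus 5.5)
The plan is to assemble three facts established in Section~\ref{sec3.1}: the Borel section $\iota:X\to Y_T$, the free product decomposition $\mathcal{P}_T=\mathcal{Q}_T\ast\mathcal{T}$, and the connectedness of torsos (Lemma~\ref{lem6}). We take $\mathcal{T}$ to be the relation generated by the Borel graph $T_1$ of \eqref{eq6}. This relation is treeable, since every component of $T_1$ embeds into a component of the tree $T$. The lemma then reduces to two checks: that $\mathcal{R}_H=\mathcal{Q}_T$, and that $\iota$ is a Borel embedding of $(X,\mathcal{R}_G)$ into $(Y_T,\mathcal{P}_T)$.

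First we show $\mathcal{R}_H=\mathcal{Q}_T$. Every edge of $H$ has the form $\{(x,t),(y,t)\}$ with the same $t$, so $\mathcal{R}_H\subset\mathcal{Q}_T$. Conversely, let $(x,t),(y,t)\in Y_T$, so that $x,y\in V_t$. By Lemma~\ref{lem6} the torso $H_t$ is connected, so there is an $H_t$-path $x=x_0,x_1,\dots,x_m=y$ inside $V_t$. The corresponding path $(x_0,t),(x_1,t),\dots,(x_m,t)$ is then an $H$-path, which gives $\mathcal{Q}_T\subset\mathcal{R}_H$. We should also note that $H$ is Borel. It is built from $G$ restricted to the pieces together with the finite adhesions, and the adhesions are indexed by the Borel set $ET\subset\mathrm{Sep}(G)$. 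So $H$ is Borel by the same reasoning that makes $\mathcal{Q}_T$ Borel in \cite[Proposition 3.1]{J}. Combining this with the lemma $\mathcal{P}_T=\mathcal{Q}_T\ast\mathcal{T}$, we get $\mathcal{P}_T=\mathcal{R}_H\ast\mathcal{T}$.

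Second, $\iota$ is a Borel embedding. It is injective and Borel because it is a Borel section of $\mathrm{proj}_X$. By the definition of $\mathcal{P}_T$, two points $(x,t),(y,t')$ are $\mathcal{P}_T$-related exactly when $(x,y)\in\mathcal{R}_G$. Hence $\iota([x]_{\mathcal{R}_G})=[\iota(x)]_{\mathcal{P}_T}\cap\iota(X)$, which is the defining condition in Remark~\ref{rmk1}.

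The only point requiring genuine care is the Borel measurability of $H$. If the direct argument is unclear, we would instead describe $EH$ as the union of two sets, both Borel because $ET$ is standard Borel with finite adhesions:
\begin{itemize}
\item the set of $\mathcal{Q}_T$-related pairs $\{(x,t),(y,t)\}$ with $\{x,y\}\in EG$;
\item the set of pairs $\{(x,t),(y,t)\}$ such that $x$ and $y$ lie in a common adhesion $V_t\cap V_{t'}$ for some $\{t,t'\}\in ET$.
\end{itemize}
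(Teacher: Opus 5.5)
Your proposal is correct and is essentially the paper's argument. The paper obtains this lemma directly from the preceding subsection: $\mathcal{T}$ is generated by $T_1$, $H$ generates $\mathcal{Q}_T$ (which rests on the connectedness of torsos, Lemma~\ref{lem6}), $\mathcal{P}_T=\mathcal{Q}_T\ast\mathcal{T}$, and the Borel section $\iota$ gives the embedding. You make each of these verifications explicit, including the Borel measurability of $H$, which the paper takes for granted.
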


\begin{cor}\label{cor2}
Let $(X,\mu,G)$ be a pmp planar graph. Then there exists a measure-preserving planar graph $(Y,\nu,H)$ such that
\begin{enumerate}
    \item every component of $H$ is either a single vertex, a single edge, a polygon or a $3$-connected planar graph, and
    \item there exists a treeable relation $\mathcal{T}$ on $Y$ such that $(X,\mu,\mathcal{R}_G)$ is embedded into $(Y,\nu,\mathcal{R}_H\ast \mathcal{T})$.
\end{enumerate}
In particular, $\mathcal{R}_G$ is $\mu$-sofic if and only if $\mathcal{R}_H$ is $\nu$-sofic. 
\end{cor}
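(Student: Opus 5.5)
We prove Corollary~\ref{cor2} by applying the tree-decomposition machinery twice, first with $1$-blocks and then with Tutte decompositions, and by using Lemma~\ref{lem16} at each stage.

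\textbf{Step 1: decomposition into $1$-blocks.} For each component $G_x$, let $\mathcal{S}_1(G_x)$ be the set of $1$-separations that are nested with all $1$-separations. This family is pairwise nested and has adhesions of size $1$, so Lemma~\ref{lem20} shows it is a separation system. It is defined canonically, using only the isomorphism-invariant structure of each component. Hence $\bigsqcup_x\mathcal{S}_1(G_x)$ is a Borel subset of $\mathrm{Sep}(G)$, because membership is decided by a countable quantification over separations inside the same component. This gives a Borel tree decomposition $T^1$.

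The torsos are the $1$-blocks. Each $1$-block is a single vertex, a single edge, or a $2$-connected subgraph. It is planar, being a subgraph of $G$ (adhesions are single points, so no edges are added), and it is locally finite. Let $H^1$ be the torsos Borel graph on $Y_{T^1}$. Lemma~\ref{lem16} embeds $(X,\mathcal{R}_G)$ into $(Y_{T^1},\mathcal{R}_{H^1}\ast\mathcal{T}^1)$. By Remark~\ref{rmk1} there is a measure $\nu^1$ making this a measure-preserving embedding; $\nu^1$ may be $\sigma$-finite.

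\textbf{Step 2: Tutte decomposition.} Restrict to the $\mathcal{R}_{H^1}$-invariant Borel set of points whose $H^1$-component is $2$-connected. On it, apply the Tutte decomposition in the same Borel fashion, using $2$-separations nested with all $2$-separations, with Lemma~\ref{lem20} again. The torsos are single edges, polygons, or $3$-connected graphs. They are planar because torsos of the Tutte decomposition are minors. They are locally finite because each vertex lies in only finitely many adhesions, as the component is locally finite.

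Keep the other components of $H^1$ (vertices and edges) unchanged, i.e.\ give them the trivial decomposition. This produces a graph $H$ on $Y$ and a treeable $\mathcal{T}^2$ with $\mathcal{R}_{H^1}$ embedded into $\mathcal{R}_H\ast\mathcal{T}^2$.

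\textbf{Step 3: combining the stages.} We have
\[
\mathcal{R}_{H^1}\ast\mathcal{T}^1 \hookrightarrow (\mathcal{R}_H\ast\mathcal{T}^2)\ast\mathcal{T}^1,
\]
where $\mathcal{T}^1$ is transported along the section embedding $Y_{T^1}\to Y$. We must check that free intersection persists. A reduced alternating cycle in the big relation projects, via $Y\to Y_{T^1}$, to a cycle in $\mathcal{R}_{H^1}\ast\mathcal{T}^1$. Alternatively, we can view the composite directly as a single tree decomposition of $G$ refining $T^1$.

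Set $\mathcal{T}=\mathcal{T}^2\ast\mathcal{T}^1$, which is treeable by Remark~\ref{lem13}. Put $\nu$ on $Y$ via Remark~\ref{rmk1}.

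\textbf{Soficity equivalence.} One direction follows from Proposition~\ref{lem14} together with Proposition~\ref{prop2} (treeable relations are sofic) and the embedding lemma (ii). For $\nu$ infinite, we restrict to finite-measure sets as in the definition. Conversely, $\mathcal{R}_H$ is a subrelation that is a free factor, and it is realized inside the relation generated by $G$ up to treeable pieces: each torso is a minor of $G$. Then $\mathcal{R}_H\ast\mathcal{T}$ is generated by the images and is sofic if $\mathcal{R}_G$ is. I would argue this by noting $\mathcal{R}_H\ast\mathcal{T}=\mathcal{P}$, which is the saturation of the embedded copy of $\mathcal{R}_G$ and hence a stable amplification, so soficity transfers. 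Restriction to a free factor preserves soficity by taking subsets of $K$.

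\textbf{Main obstacle.} The hardest part is the Borelness and measurability of the two-stage composition, together with the bookkeeping that $\nu$ is well defined and the composite free product is genuinely free. I would handle both by building a single combined tree decomposition of $G$ whose separation system is $\mathcal{S}_1$ together with the lifts of the Tutte separations of each block, and then applying Lemma~\ref{lem16} only once.
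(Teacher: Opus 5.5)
Your proposal is correct and follows essentially the same route as the paper. Decompose into $1$-blocks and apply Lemma~\ref{lem16}; apply the Tutte decomposition to the $2$-connected torsos (trivially elsewhere) and apply Lemma~\ref{lem16} again; transport $\mathcal{T}^1$ to the second space by extending it with the diagonal, and obtain $\nu$ from Remark~\ref{rmk1}. Your extra remarks fill in details the paper leaves implicit rather than changing the argument: Borelness, local finiteness of the torsos, the freeness check, and the soficity equivalence (via Propositions~\ref{lem14} and~\ref{prop2} for one direction, and amplification-invariance of soficity plus passage to a subrelation for the converse).
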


\begin{proof}
Let $H_1$ be the torsos Borel graph associated with the Borel tree decomposition $T_1$ of $G$ into the blocks of $G$. By Lemma \ref{lem16}, $(X,\mathcal{R}_G)$ is embedded into $(Y_{T_1},\mathcal{R}_{H_1}\ast \mathcal{T}_1)$ for some treeable relation $\mathcal{T}_1$. Also, each component of $H_1$ is either a single vertex, a single edge, or a $2$-connected planar graph.

Let $T_2$ be the Borel tree decomposition of $H_1$ such that the decompositions on $2$-connected components are the Tutte decompositions and those on the other components are trivial. Let $H_2$ be the torsos graph associated with $T_2$. Then again $(Y_{T_1},\mathcal{R}_{H_1})$ is embedded into $(Y_{T_2,}\mathcal{R}_{H_2}\ast \mathcal{T}_2)$ for some treeable relation $\mathcal{T}_2$. Moreover, each component of $H_2$ is either a single vertex, a single edge, a cycle or a $3$-connected planar graph.

By regarding $Y_{T_1}$ as a subset of $Y_{T_2}$, let $\mathcal{T}_1'$ be the equivalence relation on $Y_{T_2}$ defined by
\begin{equation*}
    \mathcal{T}_1'=\mathcal{T}_1\cup\{(y,y)\mid y\in Y_{T_2}\setminus Y_{T_1}\}.
\end{equation*}
Then $(Y_{T_1},\mathcal{R}_{H_1}\ast\mathcal{T}_1)$ is embedded into $(Y_{T_2},\mathcal{R}_{H_2}\ast \mathcal{T}_1'\ast\mathcal{T}_2)$. Since $(X,\mu,\mathcal{R}_G)$ is pmp, there exists a measure $\nu$ on $Y_{T_2}$ such that $\mathcal{R}_{H_2}\ast \mathcal{T}_1'\ast\mathcal{T}_2$ is measure-preserving on $(Y_{T_2},\nu)$ and $(X,\mu,\mathcal{R}_G)$ is embedded into $(Y_{T_2},\nu,\mathcal{R}_{H_2}\ast \mathcal{T}_1'\ast\mathcal{T}_2)$ by Remark \ref{rmk1}.
\end{proof}

\subsection{The algebra of cuts and irreducibility}
In this subsection, we explain a construction of a ``canonical" family of cuts of a graph established by Dicks--Dunwoody \cite{DD}, which gives a ``nice" tree decomposition of the graph. We follow the formulation of Chen's unpublished note \cite{C}, where the notion of irreducibility of cuts is introduced. 

Let $G$ be a connected graph.

\begin{dfn}
\begin{enumerate}
    \item A \textit{cut} of $G$ is a finite subset $C\subset EG$ such that $C=\partial_\mathrm{e}^GA$ for a subset $A$ of $VG$.
    Let $\mathrm{Cut}(G)$ denote the set of cuts of $G$. By identifying $C\in \mathrm{Cut}(G)$ with the characteristic function $1_C\in {\mathbb{Z}_2}^{\oplus EG}$, the set $\mathrm{Cut}(G)$ is regarded as an algebra over $\mathbb{Z}_2$.
    \item Two cuts $C,D\in\mathrm{Cut}(G)$ are said to be \textit{nested} if there exists $A,B\subset VG$ with $C=\partial^G_\mathrm{e}A$ and $D=\partial^G_\mathrm{e}B$ such that either $A\subset B$ or $B\subset A$.
    \item A non-empty cut $C\in\mathrm{Cut}(G)$ is said to be \textit{minimal} if there is no non-empty $D\in\mathrm{Cut}(G)$ such that $D\subsetneq C$. A cut $C$ of $G$ is minimal if and only if the graph $G\setminus C$ consists of exactly two connected components.
\end{enumerate}
\end{dfn}

\begin{dfn}[{\cite[section 3]{C}}]
Let $\mathcal{A}\subset\mathrm{Cut}(G)$ be a subalgebra. For every $C\in\mathcal{A}$, we define the \textit{irreducibility} of $C$ in $\mathcal{A}$ inductively on $|C|$. If $|C|=0$, then $C$ is not irreducible. If the irreducibility of all $D\in\mathcal{A}$ with $|D|<|C|$ is defined, then let $\varepsilon C$ be the set of $C'\in\mathcal{A}$ satisfying
\begin{enumerate}
    \item $|C'|\leq |C|$,
    \item $C'$ is minimal,
    \item $C'$ is non-nested with $C$, and
    \item $C'$ is nested with all irreducible $D\in\mathcal{A}$ with $|D|<|C|$.
\end{enumerate}
Then $C$ is irreducible if it is not contained in the subalgebra generated by the set of $D\in\mathcal{A}$ such that $(|D|,|\varepsilon D|)$ is strictly less than $(|C|,|\varepsilon C|)$ in the lexicographic order.
\end{dfn}

\begin{rmk}\label{rmk2}
By definition, if $C\in \mathrm{Cut}(G)$ is irreducible in $\mathcal{A}$, then $C$ is not a sum of two cuts $D_1$ and $D_2$ in $\mathcal{A}$ such that $|D_1|,|D_2|<|C|$.
\end{rmk}

\begin{thm}[{\cite[Theorem 9]{C}}]\label{thmC}
Let $\mathcal{A}\subset\mathrm{Cut}(G)$ be a subalgebra. Suppose that every cut in $\mathcal{A}$ is a sum of minimal cuts in $\mathcal{A}$. Then the set $\mathcal{A}^\mathrm{irr}$ of irreducible cuts in $\mathcal{A}$ is pairwise-nested and generates $\mathcal{A}$. Moreover, every $C\in\mathcal{A}$ is contained in the subalgebra generated by the set of irreducible cuts in $\mathcal{A}$ of size at most $|C|$.
\end{thm}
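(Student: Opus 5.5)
The plan is an induction along the well-ordering used in the definition of irreducibility: the pairs $(|C|,|\varepsilon C|)$ ordered lexicographically. For this I first check that $\varepsilon C$ is finite. Every $C'\in\varepsilon C$ is a minimal cut with $|C'|\leq |C|$ that crosses $C$. Since $C'$ separates two vertices lying on opposite sides of $C$ and close to $C$, and $G$ is connected and locally finite, $C'$ must meet a bounded neighbourhood of the finite set $C$. There are only finitely many minimal cuts of bounded size meeting a fixed finite edge set. If local finiteness is not available in the intended generality, I would instead allow $|\varepsilon C|$ to be a cardinal, which still gives a well-order.

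\textbf{Generation.} This part is formal. Let $C\in\mathcal{A}$ and argue by induction on $(|C|,|\varepsilon C|)$.
\begin{enumerate}
\item If $C$ is irreducible, there is nothing to prove.
\item Otherwise $C$ lies in the subalgebra generated by cuts $D$ with $(|D|,|\varepsilon D|)<(|C|,|\varepsilon C|)$. In particular each such $D$ has $|D|\leq|C|$.
\item By induction each $D$ is generated by irreducible cuts of size at most $|D|\leq |C|$, hence so is $C$.
\end{enumerate}
This gives both the generation statement and the size bound.

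\textbf{Nestedness.} This is the real content and where I expect the difficulty. The first step is to show that irreducible cuts are minimal, which is where the hypothesis that every cut in $\mathcal{A}$ is a sum of minimal cuts enters. Write a non-minimal $C=\partial^G_{\mathrm e}A$ and split it along the components of $G\setminus C$. Then $C$ is a sum of cuts of strictly smaller size, and I would show these can be taken in $\mathcal{A}$ by expressing $C$ through the given minimal cuts. This contradicts Remark~\ref{rmk2}.

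Next, take irreducible $C=\partial^G_{\mathrm e}A$ and $D=\partial^G_{\mathrm e}B$, both minimal, that cross. Assume $|D|\leq|C|$, and if $|D|=|C|$ also $|\varepsilon D|\leq|\varepsilon C|$. I would use the four corner cuts $\partial^G_{\mathrm e}(A\cap B)$, $\partial^G_{\mathrm e}(A\cap B^c)$, $\partial^G_{\mathrm e}(A^c\cap B)$ and $\partial^G_{\mathrm e}(A^c\cap B^c)$, all lying in $\mathcal{A}$. They satisfy the submodular inequality
\begin{equation*}
|\partial^G_{\mathrm e}(A\cap B)|+|\partial^G_{\mathrm e}(A\cup B)|\leq |C|+|D|
\end{equation*}
and the corresponding inequality for the other diagonal pair. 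They also satisfy the identity $C=\partial^G_{\mathrm e}(A\cap B)+\partial^G_{\mathrm e}(A\cap B^c)$.

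The argument then splits into two cases.
\begin{enumerate}
\item If $|D|<|C|$, then since $D$ is irreducible and smaller, every member of $\varepsilon C$ is nested with $D$. Hence $D\notin\varepsilon C$, although $D$ crosses $C$ and is minimal. So the definition itself gives no contradiction directly. Instead I would show, from submodularity, that one diagonal pair of corners both have size at most $|C|$. In the strict case they even have size $<|C|$, so $C$ is a sum of two smaller cuts in $\mathcal{A}$, contradicting Remark~\ref{rmk2}.
\item In the boundary case $|D|=|C|$, the corners have size exactly $|C|$. I would then prove that each corner $E$ crosses strictly fewer cuts in $\varepsilon C$ than $C$ does. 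The point is that every minimal cut crossing a corner crosses $C$ or $D$, and $D$ itself, or a suitable minimal cut, drops out. This gives $|\varepsilon E|<|\varepsilon C|$, and then $C=E_1+E_2$ contradicts irreducibility.
\end{enumerate}

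The main obstacle is this bookkeeping of $\varepsilon$ for corner cuts. It means checking that a minimal cut nested with both $C$ and $D$, or with all smaller irreducibles, stays nested with the corners. The strict decrease must also be secured by exhibiting one specific minimal cut that crosses $C$ but not the corner. I would try first to use $D$ itself, or a minimal cut inside $D$ obtained from the minimality decomposition, as that witness.
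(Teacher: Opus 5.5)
Your generation argument and the finiteness of $\varepsilon C$ are fine. The paper itself only cites Chen for this theorem, so I judge your nestedness argument on its own: it has two genuine gaps. Write $\partial$ for $\partial^G_{\mathrm e}$, and $\mathcal{L}_C$ for the subalgebra generated by the cuts whose pair is smaller than $(|C|,|\varepsilon C|)$.

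\textbf{Case 1 rests on a non sequitur, and counting cannot repair it.} The step ``every member of $\varepsilon C$ is nested with $D$, hence $D\notin\varepsilon C$'' does not follow, because a cut is nested with itself. So if $D$ is minimal and (inductively on size) nested with all irreducible cuts of size $<|C|$, then $D\in\varepsilon C$, and $D$ is exactly the witness you need. Your substitute size argument fails on two counts. The diagonal corners $\partial(A\cap B),\partial(A^c\cap B^c)$ sum to $C+D$, not $C$. Submodularity bounds only their sum by $|C|+|D|<2|C|$, which forces just one of them below $|C|$. In fact no argument using only corner sizes and irreducibility of $C$ and $D$ can work, since it would also apply to the example in the next paragraph.

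The working route is as follows.
\begin{enumerate}
\item Since $C=\partial(A\cap B)+\partial(A\cap B^c)=\partial(A^c\cap B)+\partial(A^c\cap B^c)\notin\mathcal{L}_C$, each side of $C$ has a corner outside $\mathcal{L}_C$, hence of size $\ge|C|$.
\item Submodularity forbids these two corners being diagonal. So they are, say, $\partial(A\cap B)$ and $\partial(A^c\cap B)$, and the other two corners have size $\le|D|$.
\item Irreducibility of $D=\partial(A\cap B^c)+\partial(A^c\cap B^c)$ forces one of these, say $\partial(A\cap B^c)$, to have size $|D|$. Then $E=\partial(A^c\cap B)$ has size exactly $|C|$.
\item By (iv) every member of $\varepsilon E$ is nested with $D$. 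By the corner lemma it therefore crosses $C$, so it lies in $\varepsilon C$.
\item Also $D\in\varepsilon C\setminus\varepsilon E$. Hence $E$ has a strictly smaller pair than $C$, so $E\in\mathcal{L}_C$, a contradiction.
\end{enumerate}

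\textbf{Case 2 needs a paired estimate, not a per-corner one.} A single corner $E$ need not satisfy $|\varepsilon E|<|\varepsilon C|$, since a cut crossing $E$ may cross $D$ but not $C$. Nor need all corners have size $|C|$. What does hold, for a diagonal pair $E,E'$ of size-$|C|$ corners, is
$|\varepsilon E|+|\varepsilon E'|\le|\varepsilon C|+|\varepsilon D|-2$.
Here a cut crossing both $E$ and $E'$ crosses both $C$ and $D$, and the witnesses are $D\in\varepsilon C$ and $C\in\varepsilon D$, from Case~1. The bookkeeping above, using irreducibility of both $C$ and $D$, then yields such a pair with one corner outside $\mathcal{L}_C$ and the other outside $\mathcal{L}_C$ or $\mathcal{L}_D$. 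This gives $|\varepsilon E|+|\varepsilon E'|\ge|\varepsilon C|+|\varepsilon D|$, contradicting the estimate.

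\textbf{The minimality step is false under the literal hypothesis.} Everything above needs irreducible cuts to be minimal. Your sketch (split $C$ along the components of $G\setminus C$, then ``express $C$ through the given minimal cuts'') gives no way to put the pieces in $\mathcal{A}$. With the hypothesis read literally (sums rather than disjoint sums), this step is false, and so is nestedness.

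Take vertices $p_1,p_2,p_3,t_1,t_2,t_3,q,r,s$ with edges $p_1p_2$, $p_2p_3$, $t_1t_2$, $t_2t_3$, $p_it_i$ $(i=1,2,3)$, $qp_1$, $qs$, $st_1$, $rp_3$, $rt_3$. Let $\mathcal{A}$ consist of the cuts $\partial S$ with $S$ a union of the atoms $P_1=\{p_1,p_2,p_3\}$, $P_2=\{q,r\}$, $P_3=\{t_1,t_2,t_3\}$, $P_4=\{s\}$.
\begin{enumerate}
\item The minimal cuts in $\mathcal{A}$ are $\partial P_1,\partial P_3,\partial P_4$ and $C=\partial(P_1\cup P_2)$, of sizes $5,5,2,5$, and they span $\mathcal{A}$.
\item $D=\partial(P_1\cup P_3)$ and $\partial P_2$ have size $4$ and are not minimal.
\item Every minimal cut of $\mathcal{A}$ is nested with $C$. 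The only one of size $\le4$ is $\partial P_4$, which is nested with $D$ and $\partial P_2$. So $\varepsilon C=\varepsilon D=\varepsilon(\partial P_2)=\varnothing$.
\item The cuts of size $<4$ are $\varnothing,\partial P_4$, and those of size $<5$ are $\varnothing,\partial P_4,\partial P_2,D$.
\end{enumerate}
Hence $D$ and $C$ are both irreducible, yet they cross, with corner sizes $5,5,4,2$. So the hypothesis must be read as ``every cut in $\mathcal{A}$ is a \emph{disjoint} union of minimal cuts in $\mathcal{A}$''. This is what holds for $\mathcal{A}_\Sigma$, since the bonds of a cut avoiding boundary edges avoid them too. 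Under that reading a non-minimal cut is a sum of at least two strictly smaller cuts of $\mathcal{A}$, so irreducible cuts are minimal, and the corrected argument above goes through.
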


\begin{rmk}
Let $(X,G)$ be a Borel graph and $\{G_\lambda\}_{\lambda\in\Lambda}$ be the set of components of $G$. Let $\mathrm{Cut}(G)=\bigsqcup_{\lambda\in\Lambda}\mathrm{Cut}(G_\lambda)$ be the set of cuts of components of $G$, which admits a structure of a standard Borel space. If $\mathcal{A}=\bigsqcup_{\lambda\in\Lambda}\mathcal{A}_\lambda$ is Borel subset of $\mathrm{Cut}(G)$ such that $\mathcal{A}_\lambda$ is a subalgebra of $\mathrm{Cut}(G_\lambda)$, then the union $\mathcal{A}^\mathrm{irr}=\bigsqcup_{\lambda\in\Lambda}\mathcal{A}_\lambda^\mathrm{irr}$ is a Borel subset of $\mathcal{A}$, where $\mathcal{A}_\lambda^\mathrm{irr}$ is the set of irreducible cuts in $\mathcal{A}_\lambda$. This is because the definition of irreducibility is ``descriptive" enough, as verified in \cite[section 4]{C}.
\end{rmk}

\section{planar complexes}\label{sec4}

Let $\mathbb{S}=\{r\in\mathbb{R}^3\mid \lVert r\rVert=1\}$ be the $2$-dimensional sphere.

\subsection{Tracks on planar complexes}

\begin{dfn}
A \textit{planar complex} is a $2$-dimensional locally-finite simplicial complex such that every connected component can be topologically embedded into $\mathbb{S}$. A $1$-dimensional planar complex is called a \textit{planar graph}.
\end{dfn}

\begin{dfn}
Let $\Sigma$ be a planar complex.
\begin{enumerate}
    \item An edge of $\Sigma$ is called a boundary edge if it is incident to at most one face.
    \item A vertex $x$ of $\Sigma$ is said to be non-singular if the induced complex $\Sigma[\mathrm{N}_\Sigma(x)\cup\{x\}]$ is homeomorphic to the closed disk in the plane. This is equivalent to that an open neighborhood of $x$ in the topological realization of $\Sigma$ is homeomorphic to $\mathbb{R}^2$ or $\mathbb{R}\times[0,\infty)$. We say that $\Sigma$ is without singular vertices if every vertex is non-singular.
\end{enumerate}
\end{dfn}

Let $\Sigma$ be a connected planar complex without singular vertices. Note that every edge is incident to either one or two faces.

\begin{dfn}
Let $\widehat{\Sigma}$ be the adjacency graph on the set $F\Sigma$ of faces of $\Sigma$. If adjacent faces $f_1,f_2\in F\Sigma$ shares an edge $e$, then the edge $\{f_1,f_2\}\in E\widehat{\Sigma}$ is denoted by $\widehat{e}$. By abusing the notation, we sometimes mean an orientation of this edge by $\widehat{e}$ as well.
\end{dfn}

Let $\mathcal{A}_\Sigma$ denote the subalgebra of cuts of the $1$-skeleton $\Sigma^1$ of $\Sigma$ which does not contain any boundary edge of $\Sigma$. Then the following is clear:

\begin{lem}\label{lem17}
The map $\tau:\{e_1,...,e_n\}\in \mathcal{A}_\Sigma\mapsto \delta_{\widehat{e}_1}+\delta_{\widehat{e}_2}+\cdots+\delta_{\widehat{e}_n}\in \mathrm{H}_1(\widehat{\Sigma},\mathbb{Z}_2)$ is well-defined and is an isomorphism. Moreover, $C\in \mathcal{A}_{\Sigma}$ is a minimal cut if and only if $\tau(C)$ is a simple cycle.
\end{lem}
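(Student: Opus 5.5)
We must show that the map $\tau$ in Lemma \ref{lem17} is well defined, is a group isomorphism, and carries minimal cuts exactly to simple cycles. We work with $\mathbb{Z}_2$-chains throughout. Since $\Sigma$ has no singular vertices, every non-boundary edge $e$ lies in exactly two faces, so $\widehat{e}$ is a genuine edge of $\widehat{\Sigma}$. Conversely, every edge of $\widehat{\Sigma}$ arises this way from a unique edge, because two distinct triangles in a simplicial complex share at most one edge. Hence $e\mapsto \widehat{e}$ is a bijection between the non-boundary edges of $\Sigma$ and $E\widehat{\Sigma}$. It therefore induces a linear isomorphism
\[
\mathbb{Z}_2^{\oplus\{\text{non-boundary edges}\}}\to \mathbb{Z}_2^{\oplus E\widehat{\Sigma}},
\]
and $\tau$ is its restriction to $\mathcal{A}_\Sigma$. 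So everything reduces to showing that this isomorphism maps $\mathcal{A}_\Sigma$ onto the cycle space $\mathrm{H}_1(\widehat{\Sigma},\mathbb{Z}_2)$.

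\textbf{Cuts go to cycles.} For $C=\partial_\mathrm{e}A\in\mathcal{A}_\Sigma$ and a face $f=\{x,y,z\}$, the number of edges of $f$ lying in $C$ is even: it is $0$ if $f\subset A$ or $f\cap A=\varnothing$, and $2$ otherwise. The edges of $f$ in $C$ are non-boundary, so they correspond exactly to the edges of $\widehat{\Sigma}$ at $f$ lying in $\tau(C)$. Hence $\tau(C)$ has even degree at every vertex of $\widehat\Sigma$. It is also finite, so it is a cycle.

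\textbf{Cycles come from cuts.} Let $c$ be a finite cycle in $\widehat{\Sigma}$, and let $D$ be the corresponding finite set of non-boundary edges of $\Sigma$. Then every face contains an even number of edges of $D$, i.e. the $1$-cochain $1_D$ satisfies $\delta 1_D=0$ on $2$-simplices. We must show $1_D$ is a coboundary, i.e. $D=\partial_\mathrm{e}A$. Since $\Sigma$ is connected, it suffices to show that $1_D$ evaluates to $0$ on every closed path of $\Sigma^1$; then $A$ can be defined by the parity of the number of $D$-edges on paths from a base vertex.

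This vanishing is the main obstacle, and planarity is needed here. By the cocycle condition, the pairing is invariant under homotopy, so it is enough to check it on one closed path $\gamma$ in each homotopy class of the planar surface $|\Sigma|$. Here I would use the topology directly. The set $D$ is realized by a finite disjoint union $\Gamma$ of simple closed curves in the interior of $|\Sigma|$, formed by joining the midpoints of the two $D$-edges inside each face met by $c$. These curves avoid the boundary. Since $|\Sigma|$ embeds in $\mathbb{S}$, each component of $\Gamma$ is a Jordan curve separating $\mathbb{S}$, and hence it separates $|\Sigma|$ into an inside and an outside part. Now $1_D(\gamma)$ is the mod $2$ intersection number of $\gamma$ with $\Gamma$. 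Put $\gamma$ in general position. By the Jordan curve theorem on $\mathbb{S}$, any closed curve crosses a separating simple closed curve an even number of times. Therefore $1_D(\gamma)=0$. Taking $A$ to be the set of vertices on one side (with parity over the components of $\Gamma$) gives $\partial_\mathrm{e}A=D$. Since $D$ contains no boundary edges, $D\in\mathcal{A}_\Sigma$.

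\textbf{Minimal cuts and simple cycles.} Since $\tau$ is an isomorphism preserving inclusion of supports, the nonzero elements of $\mathcal{A}_\Sigma$ that are minimal under inclusion correspond to the nonzero cycles minimal under inclusion. The latter are exactly the simple cycles, since every nonzero cycle contains a simple cycle. It remains to see that minimality of $C$ inside $\mathcal{A}_\Sigma$ agrees with minimality in $\mathrm{Cut}(\Sigma^1)$. Suppose $D\subsetneq C$ is a nonempty cut of $\Sigma^1$. Then $D$ contains no boundary edges because $C$ contains none, so $D\in\mathcal{A}_\Sigma$. Hence the two notions of minimality coincide.
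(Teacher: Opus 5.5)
Your proof is correct. The paper gives no argument at all (it declares the lemma ``clear''), and your proof supplies exactly the reasoning it leaves implicit: face parity for cuts $\to$ cycles, Jordan-curve separation in $\mathbb{S}$ for cycles $\to$ cuts, and the support-preserving coordinate bijection $e\mapsto\widehat{e}$ for minimal cuts $\leftrightarrow$ simple cycles. One simplification is available: the homotopy-invariance reduction and the general-position step can be dropped, because every closed edge path of $\Sigma^1$ already meets $\Gamma$ transversally, exactly at the midpoints of the $D$-edges it traverses, so the parity argument applies to it directly.
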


\begin{dfn}
A minimal cut in $\mathcal{A}_\Sigma$ is called a \textit{track} on $\Sigma$. A track on $\Sigma$ is said to be \textit{irreducible} if the cut it is irreducible in $\mathcal{A}_\Sigma$.
\end{dfn}

Fix a positive integer $n$. Let $\mathcal{C}_n$ be the set of irreducible tracks on $\Sigma$ of size at most $n$. By Theorem \ref{thmC}, every cut in $\mathcal{A}_\Sigma$ of size at most $n$ is a sum of cuts in $\mathcal{C}_n$. By Lemma \ref{lem17}, we have the following:

\begin{lem}\label{lem18}
An element of $\mathrm{H}_1(\widehat{\Sigma},\mathbb{Z}_2)$ of size at most $n$ is a sum of simple cycles in $\tau(\mathcal{C}_n)=\{\tau(C)\mid C\in\mathcal{C}_n\}$.
\end{lem}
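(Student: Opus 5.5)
The statement to prove is Lemma \ref{lem18}: every element $c\in\mathrm{H}_1(\widehat{\Sigma},\mathbb{Z}_2)$ with $|c|\leq n$ is a sum of simple cycles from $\tau(\mathcal{C}_n)$. The plan is to pull the problem back through the isomorphism $\tau$ of Lemma \ref{lem17} and apply Theorem \ref{thmC} to the algebra $\mathcal{A}=\mathcal{A}_\Sigma$.

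\textbf{Step 1: transport $c$ to a cut.} The map $\tau$ sends an edge set $\{e_1,\dots,e_k\}$ to $\delta_{\widehat e_1}+\cdots+\delta_{\widehat e_k}$. The assignment $e\mapsto\widehat e$ is injective on non-boundary edges, since each such edge determines the pair of faces containing it. Hence $|\tau(C)|=|C|$. So there is a unique $C=\tau^{-1}(c)\in\mathcal{A}_\Sigma$, and $|C|=|c|\leq n$.

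\textbf{Step 2: verify the hypothesis of Theorem \ref{thmC}.} We must check that every cut in $\mathcal{A}_\Sigma$ is a sum of minimal cuts lying in $\mathcal{A}_\Sigma$. Via $\tau$ this becomes a statement in the cycle space of $\widehat\Sigma$. A finite element of $\mathrm{H}_1(\widehat{\Sigma},\mathbb{Z}_2)$ is an edge set with all degrees even. By the standard graph-theoretic fact, such a set decomposes as an edge-disjoint union of simple cycles. By Lemma \ref{lem17}, each of these simple cycles is $\tau$ of a minimal cut in $\mathcal{A}_\Sigma$.

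\textbf{Step 3: apply Theorem \ref{thmC}.} With the hypothesis verified, Theorem \ref{thmC} places $C$ in the subalgebra generated by irreducible cuts of $\mathcal{A}_\Sigma$ of size at most $|C|\leq n$. Over $\mathbb{Z}_2$, "generated" simply means "is a finite sum of". Irreducibility needs no separate check here: Theorem \ref{thmC} states the conclusion directly in terms of irreducible cuts.

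\textbf{Step 4: show the generators lie in $\mathcal{C}_n$.} Each generator must be a \emph{track}, i.e.\ a minimal cut, and not merely an irreducible cut. For this I would use the definition of irreducibility, in either of two ways.
\begin{itemize}
\item Write a non-minimal cut $D$ as a sum of strictly smaller minimal cuts (by the decomposition in Step 2), and invoke Remark \ref{rmk2} to conclude that $D$ is not irreducible.
\item Alternatively, note that the disjoint-cycle decomposition gives at least two pieces $D_1,D_2$ with $|D_1|,|D_2|<|D|$ and $D=D_1+D_2$, where $D_2$ is itself a sum of minimal cuts. By Remark \ref{rmk2}, $D$ is again not irreducible.
\end{itemize}
So every irreducible cut is minimal, hence each generator is in $\mathcal{C}_n$.

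\textbf{Step 5: apply $\tau$.} Since $\tau$ is additive, $c=\tau(C)$ is a sum of elements of $\tau(\mathcal{C}_n)$. Each of these is a simple cycle by Lemma \ref{lem17}.

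\textbf{Main obstacle.} The delicate points are Step 2, the decomposition into minimal cuts within $\mathcal{A}_\Sigma$, and Step 4, that irreducible implies minimal. Both reduce, via Lemma \ref{lem17}, to the even-degree decomposition in the face graph $\widehat\Sigma$.
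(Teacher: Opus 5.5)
Your proposal is correct and takes the same route as the paper, which simply combines Theorem \ref{thmC} for $\mathcal{A}_\Sigma$ with the size-preserving isomorphism $\tau$ of Lemma \ref{lem17}. You also make explicit two points the paper leaves implicit: that the hypothesis of Theorem \ref{thmC} holds, and that irreducible cuts in $\mathcal{A}_\Sigma$ are tracks, both via edge-disjoint cycle decompositions in $\widehat\Sigma$. These arguments are sound; your second bullet, which groups the pieces into two summands each of size less than $|D|$, is the version that matches Remark \ref{rmk2} as stated.
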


Let $\Sigma'$ be the barycentric subdivision of $\Sigma$. A $\Sigma$-path $e_1e_2\cdots e_n$ is identified with the $\Sigma'$-path $(x_0,e_1)(e_1,x_1)\cdots(x_{n-1},e_n)(e_n,x_n)$, where $e_i=(x_{i-1},x_i)$. Also, a $\widehat{\Sigma}$-path $\widehat{e}_1\widehat{e}_2\cdots \widehat{e}_n$ is identified with the $\Sigma'$-path $(f_0,e_1)(e_1,f_1)\cdots (f_{n-1},e_n)(e_n,f_n),$
where $\widehat{e}_i=(f_{i-1},f_i)$.

\begin{lem}\label{lem19}
Suppose that $\Sigma^1$ has degrees bounded by $N$. Let $p$ be a closed $\Sigma$-path based at $x\in V\Sigma$. Then there exists a closed $\widehat{\Sigma}$-path $q$ based at $f\in F\Sigma$ incident to $x$ such that $\mathrm{len}_{\widehat{\Sigma}}(q)\leq N\mathrm{len}_\Sigma(p)$ and $p$ is homotopic to $e'qe'^{-1}$ in $\Sigma'$, where $e'=(x,f)\in \overrightarrow{E}\Sigma'$. In particular for every $c\in \mathrm{H}_1(\Sigma^1,\mathbb{Z}_2)$, there exists a $\widehat{c}\in\mathrm{H}_1(\widehat{\Sigma},\mathbb{Z}_2)$ such that $|\widehat{c}|\leq N |c|$ and $[c]=[\widehat{c}]\in\mathrm{H}_1(\Sigma',\mathbb{Z}_2)$.
\end{lem}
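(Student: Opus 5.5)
We push $p$ edge by edge off the vertices of $\Sigma$ and onto face barycenters. The tool is the structure of the link of a non-singular vertex. For $y\in V\Sigma$, the complex $\Sigma[\mathrm{N}_\Sigma(y)\cup\{y\}]$ is a closed disk. So the faces incident to $y$, ordered cyclically around $y$, form a $\widehat{\Sigma}$-path: a closed one if $y$ is interior, or a simple one between the two boundary faces if $y$ is a boundary vertex. This path has at most $\deg(y)\le N$ edges. In $\Sigma'$, any two faces $f,f'$ at $y$ are therefore joined by a $\widehat{\Sigma}$-path of length at most $N$ running through faces at $y$. This path, viewed in $\Sigma'$ as passing through midpoints of edges at $y$, is homotopic rel endpoints to $(f,y)(y,f')$. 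The homotopy is realised by the $2$-simplices $\{y,e,f\}$ of $\Sigma'$: each step $(f_{i-1},e_i)(e_i,f_i)$ is homotopic to $(f_{i-1},y)(y,f_i)$ through the two triangles $\{y,e_i,f_{i-1}\}$ and $\{y,e_i,f_i\}$.

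Write $p=e_1\cdots e_m$ with vertices $x_0=x,x_1,\dots,x_m=x$. For each $i$ choose a face $f_i$ containing the edge $e_i$; such a face exists because there are no singular vertices, so every edge lies in one or two faces. In $\Sigma'$ the edge $e_i$, read as $(x_{i-1},e_i)(e_i,x_i)$, is homotopic rel endpoints to $(x_{i-1},f_i)(f_i,x_i)$, using the triangles $\{x_{i-1},e_i,f_i\}$ and $\{e_i,x_i,f_i\}$. Hence $p\simeq (x_0,f_1)(f_1,x_1)(x_1,f_2)\cdots(f_m,x_m)$.

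Each middle piece $(f_i,x_i)(x_i,f_{i+1})$ is two faces at the vertex $x_i$, so by the first paragraph it is replaced by a $\widehat{\Sigma}$-path $q_i$ from $f_i$ to $f_{i+1}$ of length at most $N$. Indices are taken cyclically, with $f_{m+1}=f_1$ at the base vertex $x_m=x_0=x$. This gives
\[p\simeq (x,f_1)\,q_1\cdots q_{m-1}\,(f_m,x)\simeq e'\,q\,e'^{-1},\]
where $f=f_1$, $e'=(x,f_1)$, and $q=q_1\cdots q_{m-1}q_m$ with $q_m$ the path from $f_m$ to $f_1$ around $x$. The closed path $q$ has length at most $Nm$.

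For the homology statement, write $c$ as a sum of simple cycles. Let $p$ be a closed path representing $c$ with total length $|c|$, handling each connected piece separately and summing. The homotopy then gives $[p]=[q]$ in $\mathrm{H}_1(\Sigma',\mathbb{Z}_2)$, since the conjugating edge $e'$ cancels mod $2$. Set $\widehat c=[q]\in\mathrm{H}_1(\widehat\Sigma,\mathbb{Z}_2)$. Its size is at most $\mathrm{len}(q)\le N|c|$.

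The main point needing care is the boundary-vertex case of the first paragraph. There the faces around $y$ form only an arc, not a cycle, so the connecting path must go the one way available inside the arc. Its length is still at most the number of faces at $y$, which is at most $N$, so the bound is unaffected.
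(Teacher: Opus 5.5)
Your proposal is correct and follows the paper's proof. Like the paper, you choose a face $f_i$ containing each edge $e_i$ of $p$, and you join consecutive faces $f_i,f_{i+1}$ by a $\widehat{\Sigma}$-path of length at most $N$ through the faces around the shared vertex, which exists because that vertex is non-singular. You go further than the paper only in spelling out the homotopy in $\Sigma'$ via the $2$-simplices $\{y,e,f\}$ and the deduction of the homology statement; for the latter, decompose $c$ into edge-disjoint simple cycles (or Euler circuits of the components of its support) so that the total length is exactly $|c|$.
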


\begin{proof}
Set $p=e_1e_2\cdots e_n$, where $e_i=(x_i,x_{i+1})$ with $x=x_1=x_{n+1}$. For each $i=1,...,n$, take $f_i\in F\Sigma$ incident to $e_i$, and set $f_0=f_n$. Then both $f_{i-1}$ and $f_i$ are incident to $x_i$, and thus there exists a simple $\widehat{\Sigma}$-path $q_i$ from $f_{i-1}$ to $f_i$ such that $q_i$ goes through only $\Sigma$-faces incident to $x_i$ since $\Sigma$ has no singular vertices. Note that $\mathrm{len}_{\widehat{\Sigma}}(q_i)\leq N$ for every $i$. Then the $\widehat{\Sigma}$-path $q=q_1q_2\cdots q_n$ based at $f_0$ satisfies the desired property.
\end{proof}

By Lemmas \ref{lem18} and \ref{lem19}, we have:

\begin{cor}\label{cor1}
Suppose that $\Sigma^1$ has degrees bounded by $N$, and $\mathrm{H}_1(\Sigma,\mathbb{Z}_2)$ is generated by a family of simple cycles on $\Sigma$ of size at most $n$. Then $\mathrm{H}_1(\Sigma',\mathbb{Z}_2)$ is generated by $\tau(\mathcal{C}_{Nn})$.
\end{cor}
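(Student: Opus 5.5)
The plan is to combine Lemma \ref{lem18} with Lemma \ref{lem19}, passing through the barycentric subdivision $\Sigma'$, whose $1$-skeleton contains both $\Sigma^1$ and $\widehat{\Sigma}$ (each subdivided) as subgraphs.

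First I will show that $\mathrm{H}_1(\Sigma',\mathbb{Z}_2)$ is generated by the images of $\mathrm{H}_1(\Sigma^1,\mathbb{Z}_2)$. Since $\Sigma'$ is a subdivision of $\Sigma$, the inclusion $\Sigma\hookrightarrow\Sigma'$ of topological realizations is a homeomorphism. Hence $\mathrm{H}_1(\Sigma',\mathbb{Z}_2)\cong\mathrm{H}_1(\Sigma,\mathbb{Z}_2)$, and this group is a quotient of $\mathrm{H}_1(\Sigma^1,\mathbb{Z}_2)$ by the boundaries of faces. By hypothesis, $\mathrm{H}_1(\Sigma,\mathbb{Z}_2)$ is generated by classes of simple cycles $c$ on $\Sigma$ with $|c|\le n$. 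It therefore suffices to show that each such $[c]\in\mathrm{H}_1(\Sigma',\mathbb{Z}_2)$ lies in the span of $\tau(\mathcal{C}_{Nn})$.

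Now fix such a simple cycle $c$, represented by a closed $\Sigma$-path $p$ of length $|c|\le n$. Lemma \ref{lem19} gives a closed $\widehat{\Sigma}$-path $q$ of length at most $Nn$ with $p$ homotopic to $e'qe'^{-1}$ in $\Sigma'$. Let $\widehat{c}=[q]\in\mathrm{H}_1(\widehat{\Sigma},\mathbb{Z}_2)$. Then $|\widehat{c}|\le\mathrm{len}(q)\le Nn$ and $[c]=[\widehat{c}]$ in $\mathrm{H}_1(\Sigma',\mathbb{Z}_2)$. Applying Lemma \ref{lem18} with $Nn$ in place of $n$, $\widehat{c}$ is a sum of simple cycles in $\tau(\mathcal{C}_{Nn})$ in $\mathrm{H}_1(\widehat{\Sigma},\mathbb{Z}_2)$. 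Pushing this identity forward along the map $\widehat{\Sigma}\to\Sigma'$ (subdividing each edge $\widehat{e}$) gives $[c]\in\langle\tau(\mathcal{C}_{Nn})\rangle$ in $\mathrm{H}_1(\Sigma',\mathbb{Z}_2)$, as required.

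The only point needing care is the interpretation of the conclusion: elements of $\tau(\mathcal{C}_{Nn})$ live in $\mathrm{H}_1(\widehat{\Sigma},\mathbb{Z}_2)$ and are regarded in $\mathrm{H}_1(\Sigma',\mathbb{Z}_2)$ via the identification of $\widehat{\Sigma}$-paths with $\Sigma'$-paths. I must check that this identification induces a well-defined homomorphism on $\mathbb{Z}_2$-homology, which holds because it is a simplicial map from a subdivision of $\widehat{\Sigma}$ into $\Sigma'^1$. I also use that the hypothesis concerns generation of $\mathrm{H}_1(\Sigma,\mathbb{Z}_2)$ rather than of the cycle space of $\Sigma^1$. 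Neither point seems to be a real obstacle; the substantive work is already contained in Lemma \ref{lem18}, via Theorem \ref{thmC}, and Lemma \ref{lem19}.
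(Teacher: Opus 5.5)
Your proposal is correct and is exactly the paper's argument: the paper states this corollary as an immediate consequence of Lemmas \ref{lem18} and \ref{lem19}. You spell out that combination, using the subdivision isomorphism $\mathrm{H}_1(\Sigma,\mathbb{Z}_2)\cong\mathrm{H}_1(\Sigma',\mathbb{Z}_2)$, then Lemma \ref{lem19} to move each generating cycle of size at most $n$ to a $\widehat{\Sigma}$-cycle of size at most $Nn$, and then Lemma \ref{lem18} applied with $Nn$ in place of $n$.
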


Now let $\Omega_n$ be the polygonal complex (i.e., the boundary of faces are can be polygons) obtained by attaching to $\Sigma'$ a face along $\tau(C)$ for every $C\in\mathcal{C}_n$. Note that $\Omega_n$ might not be planar any more.

\begin{prop}\label{lem3}
If $p$ is a closed ${\widehat{\Sigma}}$-path of length at most $n$, then $p$ is contractible in $\Omega_n$.
\end{prop}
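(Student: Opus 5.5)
We prove the statement by induction on the length of $p$, splitting a short closed $\widehat{\Sigma}$-path into homotopically simpler closed paths until each piece is either a backtrack or the boundary of one of the attached faces. Let $p=\widehat{e}_1\cdots\widehat{e}_m$ with $m\le n$, viewed as a closed $\Sigma'$-path through face-barycenters and edge-barycenters. If $p$ visits some face twice, we write $p=p_1p_2$, up to conjugation, with $p_1,p_2$ closed and strictly shorter. Contractibility of $p$ in $\Omega_n$ then follows from contractibility of $p_1$ and $p_2$. Likewise, if $p$ contains a backtrack $\widehat{e}\widehat{e}^{-1}$, we delete it. So we may assume $p$ is a simple closed $\widehat{\Sigma}$-path of length at most $n$.

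In this case $[p]\in\mathrm{H}_1(\widehat{\Sigma},\mathbb{Z}_2)$ is a simple cycle of size at most $n$. By Lemma \ref{lem17} it equals $\tau(C)$ for a track $C$ with $|C|\le n$. By Theorem \ref{thmC}, $C$ is a sum of irreducible cuts of size at most $|C|\le n$. Lemma \ref{lem18} gives $[p]=\sum_i\tau(C_i)$ with $C_i\in\mathcal{C}_n$. If $C$ itself is irreducible, $p$ bounds an attached face and we are done. Otherwise we must upgrade this $\mathbb{Z}_2$-homology relation to a homotopy. For this we use planarity. Since $p$ is simple, its realization in $\Sigma'$ is a simple closed curve in a planar surface, so it separates $\Sigma$ into two sides $A$ and $VG\setminus A$, corresponding to $C=\partial_\mathrm{e}A$.

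The key step is to choose an irreducible track $D\in\mathcal{C}_n$ that is non-nested with $C$, or, if no such $D$ exists, to use the fact that all $\tau(C_i)$ in the decomposition are nested with $C$. In the nested case, the curves $\tau(C_i)$ lying on one side of $p$ are pairwise disjoint simple closed curves. Together with $p$, they bound a planar region of $\Sigma'$ that contains no boundary edges; the choice of side has to be arranged so that this holds. The homology relation means this region has $\mathbb{Z}_2$-trivial cut structure, so the region is a disk with holes capped by the $\tau(C_i)$. Hence $p$ is homotopic in $\Omega_n$ to a product of conjugates of the $\tau(C_i)$, which are contractible there. In the non-nested case, $D$ crosses $p$. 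We cut $p$ along $D$ into arcs and close each arc up with a sub-arc of $D$. This expresses $p$, up to homotopy in $\Sigma'$, as a product of closed paths $p'$ together with copies of $\tau(D)$, each $p'$ of length strictly less than $|C|$ or with smaller $(|C|,|\varepsilon C|)$. This is where the inductive definition of irreducibility enters, and an induction on that lexicographic pair then finishes.

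The main obstacle is this crossing step. We must show that the resulting closed paths are genuinely shorter, which means choosing the shorter of the two sub-arcs of $D$, using $|D|\le|C|$. We must also make sure the argument does not break because a side of $p$ contains boundary edges or is infinite. I would first handle the crossing by the standard Dunwoody-style exchange argument, $|C\cap A|+|D\cap B|$ versus its complement, to guarantee one of the two recombinations has size at most $|C|$ with strictly smaller complexity.
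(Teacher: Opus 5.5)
There is a genuine gap, and it is in the crossing step you yourself flag as the main obstacle. Write $\ell(\cdot)$ for $\mathrm{len}_{\widehat{\Sigma}}(\cdot)$.

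Your length control (``choose the shorter of the two sub-arcs of $D$, using $|D|\le|C|$'') does not work. Let $a$ be an arc of $p$ with endpoints $f,f'$ on $\tau(D)$, let $b$ be the complementary arc, and let $q$ be the shorter arc of $\tau(D)$ from $f$ to $f'$. Both recombined loops $aq^{-1}$ and $qb$ are shorter than $p$ only if $\ell(q)<\min(\ell(a),\ell(b))$. The bound $\ell(q)\le|D|/2$ does not give this. The statement only concerns loops of length at most $n$, and longer loops need not be null-homotopic in $\Omega_n$. So a piece that grows beyond length $n$ is out of reach of any induction hypothesis.

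The correct input is the irreducibility of $D$ itself (Remark \ref{rmk2}). If $a$ passes through one side of $D$ and $q,r$ are the two arcs of $\tau(D)$ between its endpoints, then $\tau(D)=[aq^{-1}]+[ar^{-1}]$. This forces $\min(\ell(q),\ell(r))\le\ell(a)$. But it only yields $\ell(qb)\le\ell(p)$, not a strict decrease. The other loop $aq^{-1}$ can be nearly $2\ell(a)$ long, so possibly longer than $n$. You also give no reason why $|\varepsilon|$ of the new track should drop, and that invariant is not even attached to non-simple pieces. So neither an induction on length nor one on $(|C|,|\varepsilon C|)$ closes the argument.

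The missing idea is to induct on the crossing number $\chi_{\mathcal{C}_n}(p)$ and to choose the crossed track carefully. The paper takes $C_1$ to be a leaf of the tree decomposition determined by $\{C\in\mathcal{C}_n\mid \chi_C(p)>0\}$ and performs a single swap across it. The new loop $p'$ has three properties:
\begin{itemize}
\item its length is at most $\ell(p)\le n$;
\item $\chi_{C_1}(p')=\chi_{C_1}(p)-2$;
\item by nestedness, $\chi_C(p')\le\chi_C(p)$ for every other $C\in\mathcal{C}_n$.
\end{itemize}
So the induction hypothesis applies to $p'$. The leaf choice guarantees that the leftover loop $p''=aq^{-1}$ crosses no track of $\mathcal{C}_n$. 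Hence it lies in a single planar piece $\Omega_n[V_t]$ of the tree decomposition associated with $\mathcal{C}_n$. It is contractible there because $[p'']=[p]+[p']=0$ (Lemma \ref{lem7}(ii)), and no length bound on $p''$ is needed.

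The same mechanism (Lemma \ref{lem9} together with $\pi_1(\Omega_n)=\bigast_{t}\pi_1(\Omega_n[V_t])$) also settles your nested case without any choice of side. If $C$ is nested with every member of $\mathcal{C}_n$, then $\chi_{\mathcal{C}_n}(p)=0$. Moreover $[p]=0$ in $\mathrm{H}_1(\Omega_n,\mathbb{Z}_2)$ by Lemma \ref{lem18}, so Lemma \ref{lem7}(i) applies.
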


In particular, Proposition \ref{prop0} follows. We will prove this after preparation.

\begin{dfn}
Let $p=\widehat{e}_1\widehat{e}_2\cdots\widehat{e}_n$ be a $\widehat{\Sigma}$-path. Fix the cyclic order $[\cdot,\cdot,\cdot]$ on $\{1,...,n\}$ such that if $i<j<k$ then $[i,j,k]$.

Let $C$ be a track on $\Sigma$ and $G_1,G_2$ be the two components of $\Sigma^1\setminus\{C\}$.
We say that $p$ \textit{crosses $e_i$ over} $C$ if there exists $j\in\{1,...,n\}$ such that $e_i\in G_\alpha$ and $e_j\in G_\beta$ with $\alpha\neq \beta$ and $e_k\in C$ for all $k\in\{1,...,n\}\setminus\{i,j\}$ satisfying $[j,k,i]$. 

Then let $\chi_C(p)$ be the number of $i\in\{1,...,n\}$ such that $p$ crosses $e_i$ over $C$. For a family $\mathcal{C}$ of tracks on $\Sigma$, set $\chi_{\mathcal{C}}(p)=\sum_{C\in \mathcal{C}} \chi_C(p)$.
\end{dfn}

\begin{lem}\label{lem7}
Let $\mathcal{C}$ be a pairwise-nested family of tracks on $\Sigma$ of bounded size. Let $\Omega$ be the polygonal complex obtained by attaching faces to $\Sigma'$ along $\tau(C)$ for all $C\in\mathcal{C}$.
\begin{enumerate}
    \item If a simple closed $\widehat{\Sigma}$-path $p$ satisfies $\chi_{\mathcal{C}}(p)=0$ and $[p]=0\in \mathrm{H}_1(\Omega,\mathbb{Z}_2)$, then $p$ is contractible in $\Omega$.
    \item Let $p$ and $q$ be simple $\widehat{\Sigma}$-paths with $s(p)=s(q)$ and $t(p)=t(q)$ such that $q$ is a subpath of $\tau(C)$ for some $C\in\mathcal{C}$. If $\chi_{\mathcal{C}}(pq^{-1})=0$ and $[pq^{-1}]=0\in \mathrm{H}_1(\Omega,\mathbb{Z}_2)$, then the closed $\widehat{\Sigma}$-path $pq^{-1}$ is contractible in $\Omega$.
\end{enumerate} 
\end{lem}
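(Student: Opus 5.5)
We argue by induction on the length of the simple closed $\widehat{\Sigma}$-path (for (i)), using planarity of $\Sigma'$ to see that a simple closed path in $\widehat{\Sigma}$ bounds a disk region in the sphere. The idea is that a simple closed $\widehat{\Sigma}$-path $p$ is itself $\tau$ of a track $D_p$ (Lemma \ref{lem17}). In the embedding of the component into $\mathbb{S}$, $p$ is a Jordan curve splitting $\mathbb{S}$ into two discs. The hypothesis $[p]=0$ in $\mathrm{H}_1(\Omega,\mathbb{Z}_2)$ means that $[p]$, viewed in $\mathrm{H}_1(\Sigma',\mathbb{Z}_2)$, equals a sum of the attached cycles $\tau(C)$, $C\in\mathcal{C}$, modulo boundaries of $\Sigma'$. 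Failure of contractibility of $p$ in $\Sigma'$ comes only from ``holes'' of $\Sigma$, i.e.\ complementary regions of the realization in $\mathbb{S}$, lying on both sides of $p$. The condition $\chi_{\mathcal{C}}(p)=0$ says that no $C\in\mathcal{C}$ crosses $p$. Since $\mathcal{C}$ is pairwise nested, each $\tau(C)$ is a Jordan curve lying (weakly) on one side of $p$, and these curves are nested or disjoint among themselves.

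For (i), the plan has three steps.

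First, fix the side $U$ of $p$ in $\mathbb{S}$. The holes of $\Sigma$ inside $U$ must be ``accounted for'' homologically: $[p]=\sum_{C\in\mathcal{C}'}\tau(C)$ in $\mathrm{H}_1(\Sigma',\mathbb{Z}_2)$ for a finite $\mathcal{C}'\subset\mathcal{C}$. Using non-crossing, we may assume every $\tau(C)$, $C\in\mathcal{C}'$, lies in $\overline U$. Indeed, curves outside $U$ either enclose no holes of $U$ or enclose all of them together with $p$, and one can swap to the complementary side consistently.

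Second, among $\mathcal{C}'$ take the maximal ones under nesting inside $U$. Their interiors are pairwise disjoint discs. The mod-2 count of each hole of $U$ must be odd inside exactly the hole-regions, so the region $U$ minus the open discs bounded by the maximal $\tau(C)$ contains no hole of $\Sigma$. The bounded size of $\mathcal{C}$ guarantees that only finitely many curves are involved.

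Third, $U$ minus these discs is a planar region with holes exactly the attached faces, and it is entirely filled by $\Sigma'$-faces. Hence $p$ is homotopic in $\Omega$ to a product of conjugates of the boundaries $\tau(C)$, each of which is null-homotopic in $\Omega$ because a face is attached along it. This gives contractibility.

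For (ii), we reduce to (i). Let $p$ and $q$ be as in (ii), with $q$ a subpath of $\tau(C)$. The path $pq^{-1}$ need not be simple, but it decomposes at the intersections of $p$ with $q$ into simple closed subpaths of the form $p_jq_j^{-1}$, where $p_j$ is a subpath of $p$ and $q_j$ is a subpath of $q$. Each such piece inherits $\chi_{\mathcal{C}}=0$, because a track crossing a piece would cross $pq^{-1}$ or $\tau(C)$, and the latter is impossible by nestedness. Each piece also inherits vanishing homology: its side contains no holes unaccounted for, since the track $C$ itself does not separate those holes within the disc bounded by the piece. Applying (i) to each piece and concatenating gives contractibility of $pq^{-1}$.

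The main obstacle is the second step of (i): converting the mod-2 homological statement into a geometric statement that the holes inside $U$ are exactly covered by maximal nested discs. This needs care with the $\chi$-definition, which tolerates paths running along edges of $C$ without crossing it, so $\tau(C)$ may touch $p$. I would first treat the case where every $\tau(C)$ is disjoint from $p$, then handle shared edges by pushing $\tau(C)$ slightly into one side within the barycentric subdivision.
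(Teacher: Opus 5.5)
Part (i) is a workable alternative to the paper's route, but part (ii) has a genuine gap.

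\textbf{The gap in (ii).} The argument fails at the claim that each simple piece $p_jq_j^{-1}$ ``inherits vanishing homology''. Only the sum $\sum_j[p_jq_j^{-1}]=[pq^{-1}]$ vanishes in $\mathrm{H}_1(\Omega,\mathbb{Z}_2)$. The individual pieces need not vanish, and then they are not contractible, so (i) cannot be applied to them.

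Here is a concrete example.
\begin{enumerate}
\item Let $\Sigma$ be a fine triangulation of the sphere minus three disjoint open discs $H_1,H_2,H_3$. Let $C$ be a track whose dual cycle $\tau(C)$ separates a side $W\supset H_1\cup H_2$ from $H_3$, and set $\mathcal{C}=\{C\}$.
\item Write $\tau(C)=qr$, with $q$ the lower arc from $s$ to $t$.
\item Let $p\subset\overline{W}$ run from $s$ over $H_1$, follow $q$ for a short segment in the middle, then run over $H_2$ to $t$. Arrange this so the region between $p$ and $r$ contains no hole.
\end{enumerate}
Then $\chi_{\mathcal{C}}(pq^{-1})=0$, and $[pq^{-1}]=[\tau(C)]=0$ in $\mathrm{H}_1(\Omega,\mathbb{Z}_2)=\mathrm{H}_1(\Sigma',\mathbb{Z}_2)/\langle[\tau(C)]\rangle\cong\mathbb{Z}_2$. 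Indeed $pq^{-1}\simeq r^{-1}q^{-1}=(qr)^{-1}$ is contractible in $\Omega$, as the lemma asserts.

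However, your first piece is a loop around $H_1$ alone, and its class is nonzero in $\mathrm{H}_1(\Omega,\mathbb{Z}_2)$. Your justification (``$C$ does not separate those holes within the disc bounded by the piece'') is exactly what fails. The holes in $W$ are accounted for only collectively, by $\tau(C)$, and cutting at the contact points of $p$ with $q$ splits them apart.

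The inheritance of $\chi_{\mathcal{C}}=0$ by the pieces is correct, but for a trivial reason: the edge set of a piece is contained in that of $pq^{-1}$. Nestedness plays no role there.

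\textbf{What is missing, and how the paper does it.} You need to keep $pq^{-1}$ as a single simple curve, by pushing $q$ across the $2$-cell attached along $\tau(C)$. The paper first localizes:
\begin{enumerate}
\item Using the tree decomposition of $\Omega$ associated with $\mathcal{C}$, the condition $\chi_{\mathcal{C}}=0$ puts the loop inside one piece $\Omega[V_{t_0}]$.
\item That piece is planar by Lemma \ref{lem9}.
\item The relation $[pq^{-1}]=0$ transfers to $\mathrm{H}_1(\Omega[V_{t_0}],\mathbb{Z}_2)$ via the free-product decomposition of $\pi_1(\Omega)$.
\end{enumerate}
It then adds a chord $e$ from $s(p)$ to $t(p)$ inside the $2$-cell of $\tau(C)$, splitting it into $2$-cells along $qe^{-1}$ and $re^{-1}$. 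Now $pq^{-1}\simeq pe^{-1}$, which is a null-homologous simple closed curve in a planar complex, hence contractible.

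\textbf{Comparison for (i).} Your direct hole-parity argument in the sphere embedding of $\Sigma'$ is a legitimate substitute for this localization, provided you do two things.
\begin{enumerate}
\item Work in a finite subcomplex that carries the relation $[p]=\sum_{C\in\mathcal{C}'}[\tau(C)]$.
\item Make the side choice precise. Let $c$ and $c'$ be the parities with which the curves lying on side $U$, respectively $U'$, of $p$ enclose a hole on that same side. These parities are independent of the hole, and $c+c'\equiv 1$; take $U$ to be the side with parity $1$.
\end{enumerate}
Also note that each $\tau(C)$ lies weakly on one side of $p$ because $\chi_C(p)=0$, not because of nestedness. Nestedness is what makes the maximal discs disjoint.

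That argument does not transfer to (ii) piece by piece. To stay within your framework you would have to run it on the regions into which $p$ cuts $\overline{W}$, using $q\simeq r^{-1}$ rel endpoints in $\Omega$.
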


This lemma will be proved in the next subsection since we need some facts on a tree decomposition of $\Omega$.

\begin{proof}[Proof of Proposition \ref{lem3}]
First of all, we may assume that $p$ is a simple closed path. We prove by the induction on $\chi_{\mathcal{C}_n}(p)$. Note that we have $[p]=0\in\mathrm{H}_1(\Omega_n,\mathbb{Z}_2)$ by Lemma \ref{lem18}. If $\chi_{\mathcal{C}_n}(p)=0$, then $p$ is contractible in $\Omega_n$ by Lemma \ref{lem7} (i). 

Suppose that $\chi_{\mathcal{C}_n}(p)>0$. Let $\{C_1,...,C_k\}=\{C\in\mathcal{C}_n\mid \chi_C(p)>0\}$. We may assume that $C_1$ corresponds to a leaf edge of the tree decomposition associated with $\{C_1,...,C_k\}$, that is, there exists a component $G_1$ of $\Sigma\setminus\{C_1\}$ which does not intersect $\bigcup_{i=2}^kC_i$. Set $p=\widehat{e}_1\widehat{e}_2\cdots\widehat{e}_m$ with $\widehat{e}_j=\{f_{j-1},f_{j}\}$ with $f_0=f_m$. By changing the base point of $p$, we may assume that $p$ crosses $e_1\in G_1$ over $C_1$. Let $G_2$ be the other component of $\Sigma^1\setminus\{C_1\}$ and let $j>1$ be the smallest integer such that $e_j \subset G_2$. Note that the simple $\widehat{\Sigma}$-cycle $\tau(C_1)$ goes through $f_0$ and $f_{j-1}$, and thus there are two simple $\widehat{\Sigma}$-paths $q$ and $r$ from $f_0$ to $f_{j-1}$ that are subpaths of $\tau(C)$ (one of which might be of length $0$). 

We claim that either $\mathrm{len}_{\widehat{\Sigma}}(q)$ or $\mathrm{len}_{\widehat{\Sigma}}(r)$ is at most $j-1$. Otherwise, we have 
\begin{align*}
    &\mathrm{len}_{\widehat{\Sigma}}(\widehat{e}_1\widehat{e}_{2}\cdots\widehat{e}_{j-1}q^{-1})=j-1+\mathrm{len}_{\widehat{\Sigma}}(q)< \mathrm{len}_{\widehat{\Sigma}}(r)+\mathrm{len}_{\widehat{\Sigma}}(q)\textup{ and } \\
    &\mathrm{len}_{\widehat{\Sigma}}(\widehat{e}_1\widehat{e}_{2}\cdots\widehat{e}_{j-1}r^{-1})=j-1+\mathrm{len}_{\widehat{\Sigma}}(r)< \mathrm{len}_{\widehat{\Sigma}}(q)+\mathrm{len}_{\widehat{\Sigma}}(r).
\end{align*}
Then however, we have 
\begin{equation*}
    \tau(C_1)=[rq^{-1}]=[\widehat{e}_1\widehat{e}_{2}\cdots\widehat{e}_{j-1}q^{-1}]+[\widehat{e}_1\widehat{e}_{2}\cdots\widehat{e}_{j-1}r^{-1}] \in \mathrm{H}_1(\widehat{\Sigma},\mathbb{Z}_2),
\end{equation*}
and this contradicts that $C_1$ is an irreducible track (Remark \ref{rmk2}). Hence we may assume that the closed path $p'=q\widehat{e}_j\widehat{e}_{j+1}\cdots\widehat{e}_m$ has length at most $m$. Set $q=\widehat{e'}_1\widehat{e'}_2\cdots\widehat{e'}_{j'}$. Note that $\{e_1',...,e_{j'}'\}\subset C_1$ since $q$ is a subpath of $\tau(C_1)$.

We show that $\chi_{\mathcal{C}_n}(p')<\chi_{\mathcal{C}_n}(p)$. First, we have $\chi_{C_1}(p')=\chi_{C_1}(p)-2$ by construction. Let $C\in\mathcal{C}_n\setminus\{C_1\}$, and it suffices to show that $\chi_{C}(p')\leq\chi_{C}(p)$. Let $H_1$ and $H_2$ be the two components of $\Sigma^1\setminus\{C\}$. Set
\begin{equation*}
    \{{l_1},{l_2},...,{l_{a}}\}=\{j\leq l\leq m\mid p' \textup{ crosses } e_l \textup{ over }C\}
\end{equation*}
with $l_1<l_2<\cdots<l_a$. Then it is clear that $p$ also crosses  $e_{l_2},e_{l_3},...,e_{l_a}$ over $C$. Since $\{e_1',...,e_{j'}'\}\subset C_1$ and $C_1$ is nested with $C$, the path $p'$ crosses at most one of $\{e_1',...,e_{j'}'\}$ over $C$, which implies that $\chi_C(p')\in \{a,a+1\}$. Case (i): If $\chi_C(p')=a$, then we have $e_{l_1}\in H_\alpha,e_{l_a}\in H_\beta$ with $\alpha\neq \beta$, and thus $p$ crosses at least one of $\{e_1,...,e_{l_1}\}$ over $C$. Hence $\chi_C(p)\geq a$. Case (ii): If $\chi_C(p')=a+1$, then we have $e_{l_1},e_{l_a}\in H_\alpha$ and there exists $e_{l'}'\in H_\beta$ with $1\leq l'\leq j'$ and $\alpha\neq \beta$. Then by the nested-ness, we have $C_1\subset C\cup H_\beta$, which implies that $G_1\subset H_\beta$ or $G_2\subset H_\beta$. If $G_2\subset H_\beta$, then $e_{l_a},e_{l_a+1},...,e_m\in H_\alpha\subset G_1$, which contradicts that $p$ crosses $e_1\in G_1$ over $C_1$. Hence we have $G_1\subset H_\beta$, and then $p$ crosses $e_1\in H_\beta$ and at least one of $\{e_2,e_3,...,e_{l_a}\}$ over $C$, which implies that $\chi_{C}(p)\geq a+1$.

Now we have verified that $\mathrm{len}_{\widehat{\Sigma}}(p')\leq \mathrm{len}_{\widehat{\Sigma}}(p)$ and $\chi_{\mathcal{C}_n}(p')<\chi_{\mathcal{C}_n}(p)$. Then by the induction hypothesis, $p'$ is contractible in $\Omega_n$, and in particular, $[p']=0\in\mathrm{H}_1(\Omega_n,\mathbb{Z}_2)$.

Set
\begin{equation*}
    p''=\widehat{e}_1\widehat{e}_2\cdots\widehat{e}_{j-1}q^{-1}=\widehat{e}_1\widehat{e}_2\cdots\widehat{e}_{j-1}\widehat{e'}_{j'}^{-1}\widehat{e'}_{j'-1}^{-1}\cdots\widehat{e'}_{1}^{-1}.
\end{equation*}
Since $p$ is homotopic to $p''p'$, it suffices to show that $p''$ is contractible in $\Omega_n$. Note that $[p'']=[p]+[p']=0\in\mathrm{H}_1(\Omega_n,\mathbb{Z}_2)$.
We claim that $\chi_{\mathcal{C}_n}(p'')=0$. Otherwise, there exists $C'\in \mathcal{C}_n\setminus\{C_1\}$ such that $\chi_{C'}(p'')>0$ since $\chi_{C_1}(p'')=0$. Since $e_1\in G_1$, the path $p''$ crosses only edges in $C_1\cup G_1$, and thus we have $C'\subset C_1\cup G_1$. Let $H_1'$ and $H_2'$ be the two components of $\Sigma^1\setminus C'$ such that $G_2\subset H_2'$. Since $\{e_1',e_2'...,e_{j'}'\}\subset C_1\subset C'\cup H_2'$, we have $\{e_1,...,e_{j-1}\}\cap H_1'\neq \varnothing$. Then, $p=\widehat{e}_1\widehat{e}_2\cdots\widehat{e}_m$ crosses over $C'$ since $e_j\in G_2\subset H_2'$. This implies that $C'\in\{C_2,...,C_k\}$, which contradicts that $G_1$ does not intersect $\bigcup_{i=2}^kC_i$. Hence we have $\chi_{\mathcal{T}_n}(p'')=0$. Then by Lemma \ref{lem7} (ii), $p''$ is contractible in $\Omega_n$.
\end{proof}

\subsection{Tree decompositions of planar complexes}

Let $\Sigma$ be a planar complex without singular vertices and $\Sigma'$ the barycentric subdivision of $\Sigma$.

For each track $C$ on $\Sigma$, we regard $\tau(C)$ as the simple cycle on $\Sigma'$ in this subsection. Also, let $A_{C,1}$ and $A_{C,2}$ be the vertex sets of two components of $\Sigma^1\setminus C$, and for each $i=1,2$, let $A_{C,i}'=A_{C,i}\cup\partial_\mathrm{ov}^{\Sigma'}A_{C,i}\subset V\Sigma'$. Note that $\{A_{C,1}',A_{C,2}'\}$ is a separation of $\Sigma'$ and the adhesion $A_{C,1}'\cap A_{C,2}'$ is the set of vertices which the $\Sigma'$-cycle $\tau(C)$ goes through.

Let $\mathcal{C}$ be a pairwise-nested family of tracks on $\Sigma$ of bounded size. Then $\{A_{C,1}',A_{C,2}'\}_{C\in\mathcal{C}}$ is a separation system of $\Sigma'$ by Lemma \ref{lem20}. Let $(T,\{V_t\}_{t\in VT})$ be the associated tree decomposition of $\Sigma'^1$. Now let $\Omega$ be the polygonal complex obtained by attaching faces to $\Sigma'$ along $\tau(C)$ for all $C\in \mathcal{C}$. For each $t\in VT$, let $\Omega[V_t]$ be the induced polygonal complex of $\Omega$ on $V_t$. Note that a face attached afterward is contained in some $\Omega[V_t]$. We call $(T,\{V_t\}_{t\in VT})$ \textit{the tree decomposition of $\Omega$ associated with} $\mathcal{C}$.

\begin{lem}\label{lem9}
The polygonal complex $\Omega[V_t]$ is connected and planar for every $u\in VT$.
\end{lem}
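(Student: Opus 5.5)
Connectedness and planarity are two separate claims; I will prove each using the geometry of tracks as simple closed curves in the topological realization $|\Sigma'|$.

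\emph{Connectedness.} I will show that the induced graph of $\Sigma'$ on every adhesion is connected, and then apply the Remark after Lemma \ref{lem6}. By construction, each adhesion $A_{C,1}'\cap A_{C,2}'$ is exactly the vertex set of the simple $\Sigma'$-cycle $\tau(C)$. Consecutive vertices of this cycle, namely a face barycenter and an edge barycenter, are joined by an edge of $\Sigma'$. So the induced subgraph on the adhesion contains a cycle and is connected, and hence $\Sigma'^1[V_t]$ is connected. Every attached face has its boundary $\tau(C)$ inside some adhesion, so adding these faces does not break connectedness of $\Omega[V_t]$.

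\emph{Planarity.} The point is that in $|\Sigma'|$, the curve $\tau(C)$ is a simple closed curve that separates the surface into two sides, one containing $A_{C,1}$ and the other containing $A_{C,2}$. Fix a component embedded in $\mathbb{S}$. I will show that $\Sigma'[V_t]$ is obtained by cutting $|\Sigma'|$ along the curves $\tau(C)$ for the tree edges $C$ incident to $t$, and keeping the region on the $t$-side of each curve. Nestedness of the family gives two facts: the curves for distinct edges incident to $t$ can share vertices but never cross, and the regions they cut off are pairwise disjoint. The key steps, in order, are:
\begin{enumerate}
\item Show that $V_t=\bigcap_{C}A'_{C,i(C)}$, the intersection of the appropriate sides over the edges $C$ incident to $t$.
\item Deduce that $\Sigma'[V_t]$ is the subcomplex of the planar complex $\Sigma'$ lying in a closed region $R_t$ of $\mathbb{S}$ bounded by the Jordan curves $\tau(C)$.
\item For each such $C$, the complementary open disk $D_C$ bounded by $\tau(C)$ on the other side misses $R_t$, and the disks $D_C$ are pairwise disjoint by nestedness. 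Gluing a face along $\tau(C)$ is then realized by placing it in $D_C$, after contracting or discarding the part of $\Sigma'$ that lay there.
\end{enumerate}
This produces an embedding of $\Omega[V_t]$ into $\mathbb{S}$.

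\emph{Main obstacle.} The hardest step is (iii): the pairwise disjointness of the open disks $D_C$ when two track curves share vertices, and in particular when two tracks $C\ne C'$ have the same underlying cycle $\tau(C)=\tau(C')$, so that two faces are glued along one curve. I would handle this case by noting that a Jordan curve has two complementary disks, so each of the two faces can go on a different side. A proper tree decomposition forces this: the two tracks correspond to distinct edges at $t$ only if $t$'s part is exactly that cycle, and then $\Omega[V_t]$ is a sphere. I would also check that a separation $\{A_{C,1},A_{C,2}\}$ of the $1$-skeleton determines the correct topological side of $\tau(C)$. Here I would use that $\Sigma$ has no singular vertices, so that near $\tau(C)$ the surface looks locally like an annulus.
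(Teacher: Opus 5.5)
Your proposal is correct and is essentially the paper's argument. Connectedness holds because each adhesion spans the connected cycle $\tau(C)$. Planarity holds because $\Sigma'[V_t]$ lies in the closed side $\overline{M_{C,1}}$ of every Jordan curve $i(\tau(C))$, so the opposite open disks miss $i(\Sigma'[V_t])$, are pairwise disjoint, and can receive the attached faces. Your step (i) is not needed: the paper only uses $V_t\subset A'_{C,1}$.

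The case you single out as the main obstacle cannot occur. $\tau$ is injective by Lemma \ref{lem17}, so distinct attached faces have distinct boundary curves. If $i(\tau(C))\neq i(\tau(D))$ both lie in $\overline{M_{C,1}}\cap\overline{M_{D,1}}$, a short Jordan-curve argument already gives $M_{C,2}\cap M_{D,2}=\varnothing$.
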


\begin{proof}
The connectivity is clear since the induced complex of $\Omega$ on every adhesion is connected.

Fix $t\in VT$. Since $\Sigma'$ is planar, so is $\Sigma'[V_t]$.
Let $i:\Sigma'[V_t]\to \mathbb{S}$ be an embedding. Let $\mathcal{C}_t$ be the set of $C\in\mathcal{C}$ contained in $\Sigma'[V_t]$. It suffices to show the following:
\begin{enumerate}
    \item For every $C\in \mathcal{C}$, the simple closed curve $i(\tau(C))$ bounds an open area $M_C\subset \mathbb{S}$ which does not intersect $i(\Sigma'[V_t])$.
    \item The family $\{M_C\}_{C\in\mathcal{C}_t}$ is pairwise-disjoint.
\end{enumerate}

For $C\in\mathcal{C}$, we may assume that $V_t\subset A_{C,1}'$ by reversing $A'_{C,1}$ and $A'_{C,2}$ if needed. By the definition of $\{A_{C,1}, A_{C,2}\}$, the simple closed curve $i(\tau(C))$ separates $\mathbb{S}$ into two open areas $M_{C,1}$ and $M_{C,2}$ such that $i(A_{C,i})\subset M_{C,i}$ for $i=1,2$. This implies that 
\begin{equation*}
    i(\Sigma'[V_t])\subset i(\Sigma'[A_{C,1}'])\subset \overline{M_{C,1}}=M_{C,1}\cup \partial M_{C,1},
\end{equation*}
and thus $i(\Sigma'[V_t])\cap M_{C,2}=\varnothing$. Claim (i) is proved. 

For $C,D\in\mathcal{C}_t$, we have $i(C),i(D)\subset i(\Sigma'[V_t])\subset \overline{M_{C,1}}\cap \overline{M_{D,1}}$ by the above. This implies that $M_{C,2}\cap M_{D,2}=\varnothing$, and claim (ii) is proved.
\end{proof}

Now we can prove Lemma \ref{lem7}.

\begin{proof}[Proof of Lemma \ref{lem7}]
Let $(T,\{V_t\}_{t\in VT})$ be the tree decomposition of $\Omega$ associated with $\mathcal{C}$.

(i) By $\chi_{\mathcal{C}}(p)=0$, there exists $t_0\in VT$ such that $p$ is contained in $\Omega[V_{t_0}]$, and thus it suffices to show that $p$ is contractible in $\Omega_n[V_{t_0}]$. Since $\pi_1(\Omega)=\bigast_{t\in VT}\pi_1(\Omega[V_t])$ holds, the assumption $[p]=0\in \mathrm{H}_1(\Omega,\mathbb{Z}_2)$ implies that $[p]=0\in \mathrm{H}_1(\Omega[V_{t_0}],\mathbb{Z}_2)$. In general, a simple closed curve on a planar surface is contractible if and only if the element of the first homology group represented by the curve is trivial. Since $\Omega[V_{t_0}]$ is planar by Lemma \ref{lem9}, $p$ is contractible in $\Omega[V_{t_0}]$.

(ii) Again, $pq^{-1}$ is contained in some $\Omega[V_{t_0}]$ and $[pq^{-1}]=0\in \mathrm{H}_1(\Omega[V_{t_0}],\mathbb{Z}_2)$. Moreover, we may assume $\tau(C)$ is also contained in $\Omega[V_{t_0}]$ by replacing $C$ with aother element in $\mathcal{C}$ if needed. If $p$ is a simple closed path, it is contractible in $\Omega$ by (i), and so is $pq^{-1}$. Hence suppose $s(p)\neq t(p)$. Let $r$ be the simple $\widehat{\Sigma}$-path such that $qr$ represents $\tau(C)$. We take a subdivision $\Omega[V_{t_0}]'$ of $\Omega[V_{t_0}]$ as follows: Add an edge $e$ connecting $s(p)$ and $t(p)$, which is orientated as $e=(s(p),t(p))$, and replace the face attached along $\tau(C)$ with two faces attached along the simple cycles represented by $qe^{-1}$ and $re^{-1}$, respectively. Then the simple closed path $pe^{-1}$ is homotopic to $pq^{-1}$ in $\Omega[V_{t_0}]'$, and thus we have $[pe^{-1}]=0\in\mathrm{H}_1(\Omega[V_{t_0}]',\mathbb{Z}_2)$. By the planarity of $\Omega[V_{t_0}]'$, the path $pe^{-1}$ is contractible in $\Omega[V_{t_0}]'$, which implies that $pq^{-1}$ is contractible in $\Omega[V_{t_0}]$.
\end{proof}

Note that for each $\{s,t\}\in ET$, the induced complex $\Omega[V_s\cap V_t]$ is homeomorphic to the closed disk. Also, $\Omega$ is homeomorphic to the polygonal complex obtained by gluing $\{\Omega[V_t]\}_{t\in VT}$ to each other, where the attachment maps are the identification of $\Omega[V_s\cap V_t]\subset \Omega[V_s]$ and $\Omega[V_s\cap V_t]\subset \Omega[V_t]$ for all $\{s,t\}\in ET$.

Let $\pi:\widetilde{\Omega}\to \Omega$ be the universal covering map. We will give a tree decomposition $(\widetilde{T},\{\widetilde{V}_t\}_{t\in V\widetilde{T}})$ of ${\widetilde{\Omega}}$.
By the van Kampen theorem, the fundamental group $\pi_1(\Omega)$ of $\Omega$ is identified with the free product $\bigast_{t\in VT}\pi_1(\Omega[V_t])$. Also, $\bigast_{t\in VT}\pi_1(\Omega[V_t])$ is a fundamental group of the graph of groups, where the underlining graph is $T$, the vertex group for each $t\in VT$ is $\pi_1(\Omega[V_t])$ and all edge groups are trivial. Let $\widetilde{T}$ be the universal covering tree of this graph of groups and let $\pi_1(\Omega)$ act on $\widetilde{T}$ through the above identification, with the quotient map $\rho:\widetilde{T}\to T$. Then there exists a $\pi_1(\Omega)$-equivariant map $t\in V\widetilde{T}\mapsto \widetilde{V}_t\subset V\widetilde{\Omega}$ such that $\pi|_{\widetilde{\Omega}[\widetilde{V}_t]}:\widetilde{\Omega}[\widetilde{V}_t]\to \Omega[V_{\rho(t)}]$ is the universal covering map for every $t\in V\widetilde{T}$, and $\widetilde{\Omega}[\widetilde{V}_s\cap \widetilde{V}_t]$ is a lift of $\Omega[V_{\rho(s)}\cap V_{\rho(t)}]$ for every $\{s,t\}\in E\widetilde{T}$. Then $(\widetilde{T},\{\widetilde{V}_t\}_{t\in V\widetilde{T}})$ is a tree decomposition of ${\widetilde{\Omega}}$.

Now we remove from $\widetilde{\Omega}$, the unique face in $\widetilde{\Omega}[\widetilde{V}_s\cap \widetilde{V}_t]$ for each $\{s,t\}\in E\widetilde{T}$, and then obtain a (normal) covering $\widetilde{\Sigma}'$ of $\Sigma'$. We call $\widetilde{\Sigma}'$ \textit{the covering of $\Sigma'$ associated with} $\mathcal{C}$. We will prove the following:

\begin{prop}\label{prop1}
The simplicial complex $\widetilde{\Sigma}'$ is planar and $\mathrm{H}_1(\widetilde{\Sigma}',\mathbb{Z}_2)$ is generated by the family of simple cycles on $\Sigma'$ of size at most $2\sup_{C\in\mathcal{C}}|C|$.
\end{prop}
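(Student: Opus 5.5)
We prove the two assertions separately, working with the tree decomposition $(\widetilde{T},\{\widetilde{V}_t\}_{t\in V\widetilde{T}})$ of $\widetilde{\Omega}$ constructed above. Write $\widetilde{\Sigma}'[\widetilde{V}_t]$ for the piece of $\widetilde{\Sigma}'$ on $\widetilde{V}_t$; it is $\widetilde{\Omega}[\widetilde{V}_t]$ with the adhesion faces removed. By construction, $\widetilde{\Sigma}'$ is obtained by gluing the pieces $\widetilde{\Sigma}'[\widetilde{V}_t]$ along the tree $\widetilde{T}$. For each edge $\{s,t\}\in E\widetilde{T}$, the two pieces share exactly the lift of the simple cycle $\tau(C)$, where $C\in\mathcal{C}$ is the track corresponding to the edge $\rho(\{s,t\})$.

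\textbf{Planarity.} Each $\widetilde{\Omega}[\widetilde{V}_t]$ is the universal cover of $\Omega[V_{\rho(t)}]$. By Lemma \ref{lem9}, $\Omega[V_{\rho(t)}]$ is a connected planar complex. Its universal cover is therefore simply connected, and it is a planar surface, possibly with boundary. I will justify this as follows: $\Sigma'$ has no singular vertices, and the attached disks fill boundary circles, so $\Omega[V_{\rho(t)}]$ is a $2$-manifold with boundary. A simply connected surface embeds in $\mathbb{S}$.

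Removing the adhesion faces turns each lifted $\tau(C)$ into a boundary circle of a piece. The lifts of these cycles in a single piece are pairwise disjoint. This follows from the nestedness of $\mathcal{C}$, via claim (ii) in the proof of Lemma \ref{lem9}, which says the regions $M_C$ are disjoint.

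We then glue pieces along boundary circles following the tree $\widetilde{T}$. Gluing two planar surfaces along one boundary circle each gives a planar surface (disk-sum). We argue inductively over finite subtrees of $\widetilde{T}$. Every finite subcomplex of $\widetilde{\Sigma}'$ lies in a finite union of pieces, so every finite subcomplex embeds in $\mathbb{S}$. A locally finite complex all of whose finite subcomplexes are planar is planar: one can use Kuratowski on the $1$-skeleton together with the fact that the faces are triangles, or exhaust by compact subsurfaces. Hence $\widetilde{\Sigma}'$ is planar.

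\textbf{Homology.} By Mayer--Vietoris along the tree, $\mathrm{H}_1(\widetilde{\Sigma}',\mathbb{Z}_2)$ is generated by two families: the images of $\mathrm{H}_1(\widetilde{\Sigma}'[\widetilde{V}_t],\mathbb{Z}_2)$, and classes coming from the tree itself. The tree contributes nothing, since $\widetilde{T}$ is a tree and each adhesion (a cycle) is connected.

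Each piece $\widetilde{\Sigma}'[\widetilde{V}_t]$ is a simply connected surface with some disks removed. Its $\mathrm{H}_1$ is therefore generated by the boundary circles of the removed disks, namely the lifts of the cycles $\tau(C)$. The key point here is that a simply connected planar surface minus disjoint open disks has homology generated by the boundary curves.

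Each such lift has size $|\tau(C)|$ in $\Sigma'$, which equals $2|C|$, since each $\widehat{\Sigma}$-edge becomes two $\Sigma'$-edges. So every generator is a simple cycle of size at most $2\sup_{C\in\mathcal{C}}|C|$. (The statement says ``on $\Sigma'$''; I read this as simple cycles on $\widetilde{\Sigma}'$.)

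\textbf{Main obstacle.} The delicate step is the claim that $\mathrm{H}_1$ of each piece is generated by its boundary circles. In the non-compact case, infinitely many holes can accumulate toward ends. I would handle this with a compact exhaustion: any $1$-cycle $c$ is supported in a compact subsurface $K$. In the simply connected surface, $c$ bounds a finite $2$-chain. The part of that chain lying in removed disks is a finite sum of disks, and their boundaries give the required expression of $c$ as a sum of boundary circles.
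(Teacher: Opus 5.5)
Your proof is correct and follows the paper's route. Planarity comes from the pieces $\widetilde{\Omega}[\widetilde{V}_t]$ being simply connected surfaces glued along $\widetilde{T}$; your disk-sum induction is what the paper packages as Lemma \ref{lem8}. The bound $2\sup_{C\in\mathcal{C}}|C|$ comes from writing a cycle as the boundary of a finite $2$-chain in a simply connected complex and keeping only the lifted $\tau(C)$-faces.

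Two small remarks. First, the paper runs that last argument once, directly on the simply connected $\widetilde{\Omega}$, so your Mayer--Vietoris reduction to pieces is unnecessary. Second, the lifted adhesion cycles in one piece need not be pairwise disjoint, since nested tracks can share edges; only the open disks they bound are disjoint, and that is all the gluing in Lemma \ref{lem8} actually uses.
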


We say that a cycle $C$ on a planar graph is \textit{facial} if the polygonal complex obtained by attaching a face to the graph along $C$ is planar.

\begin{lem}\label{lem8}
Let $\Sigma$ be a $2$-dimensional simplicial complex and $(T,\{V_t\}_{t\in VT})$ a tree decomposition of $\Sigma$. Suppose that for every $s\in VT$, the following conditions hold:
\begin{enumerate}
    \item $\Sigma[V_s]$ is a planar graph,
    \item if $t\in VT$ is adjacent to $s$, then $\Sigma[V_s\cap V_t]$ is $1$-dimensional and is a facial cycle in $\Sigma^1[V_t]$, and
    \item if $t,t'\in VT$ are distinct vertices adjacent to $s$, then the cycles $\Sigma[V_s\cap V_{t}]$ and $\Sigma[V_s\cap V_{t'}]$ are distinct from each other.
\end{enumerate}
Then $\Sigma$ is a planar complex.
\end{lem}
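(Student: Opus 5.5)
The plan is to build a topological embedding of each component of $\Sigma$ into $\mathbb{S}$ piece by piece, following the tree $T$. First I record some reductions. Since $\Sigma$ is locally finite and every adhesion $\Sigma[V_s\cap V_t]$ is a finite cycle, we may treat each component separately and assume $T$ is countable. Fix a root $t_0\in VT$ and enumerate $VT=\{t_0,t_1,t_2,\dots\}$ so that each $t_k$ ($k\geq1$) is adjacent to exactly one earlier $t_j$, for instance by breadth-first order. Then $\Sigma$ is the increasing union of the subcomplexes $\Sigma_k=\bigcup_{j\leq k}\Sigma[V_{t_j}]$. By the tree-decomposition axioms, $\Sigma_{k}$ is obtained from $\Sigma_{k-1}$ by gluing $\Sigma[V_{t_k}]$ along the single cycle $\Sigma[V_{t_k}\cap V_{t_j}]$. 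I read condition (i) as saying that each $\Sigma[V_s]$ is a planar complex.

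The inductive statement I want is the following. There is an embedding $i_k:\Sigma_k\to\mathbb{S}$, extending $i_{k-1}$, such that for every edge $\{t_j,t\}\in ET$ with $j\leq k$ and $t\notin\{t_0,\dots,t_k\}$, the curve $i_k(\Sigma[V_{t_j}\cap V_t])$ bounds an open disk $D_{t_j,t}\subset\mathbb{S}$ with $D_{t_j,t}\cap i_k(\Sigma_k)=\varnothing$, and these disks are pairwise disjoint. Given this at stage $k-1$, let $t_k$ be adjacent to $t_j$ and let $c$ be the cycle $\Sigma[V_{t_j}\cap V_{t_k}]$. By (ii), $c$ is facial in $\Sigma[V_{t_k}]$. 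So $\Sigma[V_{t_k}]\cup(\text{face along }c)$ embeds in $\mathbb{S}$, and by the Schoenflies theorem $\Sigma[V_{t_k}]$ then embeds in a closed disk with $c$ mapped onto the boundary. Using Schoenflies again, I compose with a homeomorphism from this closed disk onto $\overline{D_{t_j,t_k}}$ that agrees with $i_{k-1}$ on $c$. This yields $i_k$, which is injective because the interior of $D_{t_j,t_k}$ misses $i_{k-1}(\Sigma_{k-1})$.

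It remains to produce the new empty disks for the neighbours of $t_k$, and this is the main obstacle. I need an embedding of $\Sigma[V_{t_k}]$ in which $c$ bounds the outer face and, at the same time, every other adhesion cycle $c'=\Sigma[V_{t_k}\cap V_{t'}]$ bounds an empty disk, all these disks being pairwise disjoint. Disjointness is the easy part. If $c'$ lay in the closure of the empty disk of $c''$, it would lie in $c''$, because the open disk avoids $\Sigma[V_{t_k}]\supset c'$; as both are simple cycles this forces $c'=c''$, contradicting (iii). Simultaneous existence is the real issue, since a single facial cycle need not be empty in a fixed embedding.

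My first attempt is to attach faces one cycle at a time, proving inductively that the complex obtained by attaching faces along any finite set of adhesion cycles of $V_{t_k}$ stays planar. At each step I would use (ii), together with the fact that the adhesions are $1$-dimensional, so no face of $\Sigma$ lies along them, to re-embed the previously attached faces. If that fails in general, I would fall back on the situation actually needed in Lemma \ref{lem9}, where the pieces come from a single planar $\Sigma'$ and the empty disks $M_C$ are given simultaneously. Finally, for the infinite union, define $i=\bigcup_k i_k$. It is injective, and it is continuous because $\Sigma$ carries the weak topology and each closed simplex lies in some $\Sigma_k$. Hence $i$ is an embedding of each component, so $\Sigma$ is planar.
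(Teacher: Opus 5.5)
\textbf{There is a genuine gap at the step you flag yourself: the simultaneous existence of the empty disks.} It cannot be closed from (i)--(iii) as written. Adhesion cycles that are facial one at a time need not be facial simultaneously, and then the conclusion fails. Here is an example.
\begin{itemize}
\item Take $\Sigma[V_s]=K_{2,4}$, i.e.\ the four paths $ua_iv$ ($1\le i\le 4$).
\item Let $T$ be a star with centre $s$ and leaves $t_2,t_3,t_4$.
\item Set $V_{t_j}=\{u,v,a_1,a_j,h_j\}$, where $h_j$ is a new vertex joined to $u,a_1,v,a_j$. This bag is a wheel; cone it off to a triangulated disk if you want $2$-simplices.
\end{itemize}
This is a tree decomposition with planar bags. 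Each adhesion $\Sigma[V_s\cap V_{t_j}]$ is the induced $4$-cycle $ua_1va_j$. It is facial in the wheel, and also facial in $K_{2,4}$ (take a rotation at $u$ in which $a_1,a_j$ are consecutive). The three adhesion cycles are distinct. Yet $\Sigma$ contains $K_{3,3}$ with sides $\{u,v,a_1\}$ and $\{h_2,h_3,h_4\}$, so the lemma as literally stated is false.

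The failure sits exactly at your step. The faces of any embedding of $K_{2,4}$ are the cycles $ua_iva_j$ with $a_i,a_j$ consecutive at $u$, and $a_1$ cannot be consecutive with all of $a_2,a_3,a_4$. In particular your \emph{first attempt} must break down: it adds $2$-cells along adhesion cycles one at a time, using only (ii) and the $1$-dimensionality of adhesions. Here the $2$-cells along $ua_1va_2$, $ua_1va_3$, $ua_1va_4$ would all contain the edge $ua_1$.

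\textbf{The paper's proof makes the same leap.} It fixes an arbitrary embedding $i_s$ and asserts that all the curves $i_s(\Sigma[V_s\cap V_t])$ bound pairwise disjoint regions $M_{s,t}$ missing $i_s(\Sigma[V_s])$. What is actually needed is a simultaneous form of (ii): for each $s$ there is one embedding of $\Sigma[V_s]$ in which every adhesion cycle at $s$ bounds an open disk disjoint from the image. Equivalently, attaching $2$-cells along all of them at once keeps $\Sigma[V_s]$ planar.

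This stronger condition does hold where the lemma is used, namely Proposition~\ref{prop1} (not Lemma~\ref{lem9}, which your fallback points to). There $\widetilde{\Omega}[\widetilde{V}_t]$ is planar and carries a $2$-cell on each of its adhesion cycles. Deleting these $2$-cells from an embedding leaves the required empty disks. Under that hypothesis your inductive Schoenflies gluing goes through; it is a one-vertex-at-a-time version of the paper's connected-sum construction. Your disjointness argument from (iii) is also fine. But your fallback proves a modified lemma, not the stated one.

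One smaller point for infinite $T$. $\bigcup_k i_k$ is a continuous injection of a non-compact complex, which need not be a homeomorphism onto its image. You must either control where later pieces accumulate, or reduce to finite $T$ by an inductive-limit argument, as the paper does.
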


\begin{proof}
Since planarity is closed under taking inductive limits, we may assume that $T$ is a finite tree. For each $s\in VT$, let $i_s:\Sigma[V_s]\to \mathbb{S}_s\simeq \mathbb{S}$ be an embedding. Note that for $\{s,t\}\in ET$, the simple closed curve $i_s(\Sigma[V_s\cap V_t])$ bounds an open area $M_{s,t}\subset \mathbb{S}_s$ which is does not intersect $i_s(\Sigma[V_s])$. Moreover, if $\{s,t\},\{s,t'\}\in ET$ are distinct, then $M_{s,t}$ and $M_{s,t'}$ are disjoint with each other. Now we consider the connected sum $\#_{s\in VT} \mathbb{S}_s$ as follows: First, for each $s\in VT$, remove $M_{s,t}$ from $\mathbb{S}_s$ for all $t$ adjacent to $t$. Next, for each $\{s,t\}\in ET$, we glue $\mathbb{S}_s\setminus \bigcup_{t\sim s}M_{s,t}$ with $\mathbb{S}_t\setminus \bigcup_{s\sim t}M_{s,t}$ by the attachment map $f_{s,t}:\partial M_{s,t}\to \partial M_{t,s}$ so that $f_{s,t}\circ i_s|_{\Sigma[V_s\cap V_t]}=i_t|_{\Sigma[V_s\cap V_t]}$. Then by construction, there exists a embedding $i:\Sigma\to \#_{s\in VT} \mathbb{S}_s$ such that $i|_{\Sigma[V_s]}:\Sigma[V_s]\to \#_{s\in VT} \mathbb{S}_s$ coincides with $i_s:\Sigma[V_s]\to \mathbb{S}_s\setminus \bigcup_{t\sim s}M_{s,t}\hookrightarrow \#_{s\in VT} \mathbb{S}_s$ for every $s\in VT$. Since $\#_{s\in VT} \mathbb{S}_s^2$ is homeomorphic to $\mathbb{S}$, the lemma is proved.
\end{proof}

\begin{proof}[Proof of Proposition \ref{prop1}]
For every $t\in V\widetilde{T}$, the complex $\widetilde{\Omega}[\widetilde{V}_t]$ is planar since it is homeomorphic to a simply connected surface with boundary. Then by Lemma \ref{lem8}, $\widetilde{\Sigma}'$ is planar. Moreover, since $\widetilde{\Omega}$ is simply connected, $\mathrm{H}_1(\widetilde{\Sigma}',\mathbb{Z}_2)$ is generated by the boundaries of the faces in $\widetilde{\Omega}$. The length of these boundaries are bounded by $\sup_{C\in\mathcal{C}}|\tau(C)|=2\sup_{C\in\mathcal{C}}|C|$. Here recall that $\tau(C)$ is considered as a cycle on $\Sigma'$.
\end{proof}

\section{Planar Borel complexes}\label{sec5}

\subsection{Measure-treeability}
\begin{thm}[{\cite{CGMT}}]\label{fact1}
Let $(X,\Omega)$ be a planar Borel polygonal complex.
If $\mathrm{H}_1(\Omega,\mathbb{Z}_2)=0$, then $\mathcal{R}_\Omega$ is measure-treeable.
\end{thm}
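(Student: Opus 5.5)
This statement is cited from \cite{CGMT}, so the plan reduces it to the accessible case, Theorem \ref{thmJ1}, together with the tree-decomposition machinery of Section \ref{sec3}.

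\emph{Step 1: each component has at most one end.} Fix a component $\Omega_x$ and suppose $\mathrm{H}_1(\Omega_x,\mathbb{Z}_2)=0$. If $\Omega_x$ had two ends, a finite set of edges would separate them. Inside a planar surface this separating set contains a finite cut, and that cut is dual to a closed curve. Since the curve separates two infinite pieces, it is homologically nontrivial, contradicting $\mathrm{H}_1=0$. In particular every component is accessible. The argument has to be adapted to singular vertices: the cut argument should be run after resolving singular vertices as below, or phrased via the dual graph of faces as in Lemma \ref{lem17}.

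\emph{Step 2: reduce to complexes without singular vertices.} Take a Borel tree decomposition of the $1$-skeleton along cut vertices, i.e.\ a Borel version of the $1$-block decomposition. A singular vertex of a planar $2$-complex is essentially a point where several disk-like sectors or dangling edges meet. Splitting along such points produces torsos, and I expect three facts about them:
\begin{enumerate}
\item the torsos are planar complexes without singular vertices, or are graphs;
\item they still have trivial $\mathbb{Z}_2$-homology, because the homology of $\Omega$ is the direct sum of the homologies of the pieces when the adhesions are points;
\item they are one-ended or finite.
\end{enumerate}
By Theorem \ref{thmJ} it then suffices to prove measure-treeability of $\mathcal{Q}_T$, i.e.\ of the torsos Borel complex.

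\emph{Step 3: the one-ended simply connected planar case.} A component is now a planar surface with $\mathrm{H}_1=0$ and at most one end. It is therefore a disk (possibly open) or a sphere (finite). Its $1$-skeleton is a one-ended planar graph, or the component is finite. Finite components are trivially treeable. For the remaining part, apply Theorem \ref{thmJ1} to the $1$-skeleton, which is a planar Borel graph with one-ended, hence accessible, components. This gives $\mu$-treeability for every $\mu$, i.e.\ measure-treeability. Polygonal faces play no role here, since $\mathcal{R}_\Omega=\mathcal{R}_{\Omega^1}$ and planarity of $\Omega^1$ follows from planarity of $\Omega$.

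\emph{Main obstacle.} The delicate step is Step 2: producing the singular-vertex splitting as a Borel tree decomposition with finite adhesions whose torsos keep both planarity and vanishing homology. The first thing I would try is the Borel $1$-block decomposition of Corollary \ref{cor2}, applied to a subdivided incidence graph (vertices plus faces) so that sectors at a singular vertex become separate blocks. I would then verify Mayer--Vietoris additivity of $\mathrm{H}_1$ across point adhesions.
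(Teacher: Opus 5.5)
\textbf{There is a genuine gap: Step 1 is false, and the reduction to Theorem \ref{thmJ1} cannot work.} This is not a technicality; it is exactly the case this theorem exists to handle. Vanishing of $\mathrm{H}_1(\Omega,\mathbb{Z}_2)$ forces neither one end nor accessibility.
\begin{enumerate}
\item Already a bi-infinite path, or a triangulated strip $[0,1]\times\mathbb{R}$, has $\mathrm{H}_1=0$ and two ends. Your argument breaks because a finite edge set separating two ends of a planar surface \emph{with boundary} is dual to an arc joining boundary points, not to a closed curve. Such a set carries no homology.
\item More seriously, take a regular neighbourhood in the plane of the binary tree whose width near depth $k$ grows like $k$, and triangulate it with bounded degree. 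This is a simply connected planar surface with boundary. It has no singular vertices, and no cut vertices either, since every link is a path or a cycle. So your Step 2 leaves it untouched. Yet two ends that branch at depth $k$ are joined by about $k$ edge-disjoint double rays, so there is no uniform bound on separating edge sets. The $1$-skeleton is inaccessible, and Theorem \ref{thmJ1} gives nothing.
\end{enumerate}
The introduction stresses that removing the accessibility hypothesis from Theorem \ref{thmJ1} is an open problem. Moreover, in the proof of Theorem \ref{fact2} this result is applied to pieces $\Omega[V_t]$ about which nothing beyond planarity and $\mathrm{H}_1=0$ is known. So no reduction to the accessible case is available.

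\textbf{The missing idea is that the faces are the essential input,} not irrelevant as your Step 3 asserts. Planarity gives that each edge of $\Omega$ lies in at most two faces: three faces on one edge would give a neighbourhood that does not embed in the plane. And $\mathrm{H}_1(\Omega,\mathbb{Z}_2)=0$ says precisely that the face boundaries span the cycle space of $\Omega^1$. Hence the Borel family of face boundaries is a Borel $2$-basis of $\Omega^1$ in the sense of \cite[Definition 3.5]{CGMT}. Then \cite[Theorem 3.6]{CGMT} yields measure-treeability with no assumption on ends. That is the paper's entire proof; it uses no tree decomposition and no appeal to Theorem \ref{thmJ1}.
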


\begin{proof}
The set of the boundaries of faces of $\Omega$ is a Borel $2$-basis of $\Omega^1$ in the sense of \cite[Definition 3.5]{CGMT}. By \cite[Theorem 3.6]{CGMT}, $\mathcal{R}_\Omega$ is measure-treeable.
\end{proof}

The following is inspired by Theorem \cite[Theorem 5.2]{J}:

\begin{thm} \label{fact2} 
Let $(X,\Sigma)$ be a bounded-degree planar Borel complex without singular vertices.
If $\mathrm{H}_1(\Sigma,\mathbb{Z}_2)$ is boundedly generated, then $\mathcal{R}_\Sigma$ is measure-treeable.
\end{thm}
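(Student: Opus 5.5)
We plan to reduce Theorem \ref{fact2} to Theorem \ref{fact1} by a Borel tree decomposition along irreducible tracks, in the spirit of \cite[Theorem 5.2]{J}. Suppose $\mathrm{H}_1(\Sigma,\mathbb{Z}_2)$ is generated by simple cycles of size at most $n$, and let $N$ bound the degrees. Pass to the barycentric subdivision $\Sigma'$. The inclusion $X\hookrightarrow V\Sigma'$ is a Borel embedding of $\mathcal{R}_\Sigma$ into $\mathcal{R}_{\Sigma'}$, so measure-treeability of $\mathcal{R}_{\Sigma'}$ suffices.

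Let $\mathcal{C}=\mathcal{C}_{Nn}$ be the set of irreducible tracks of size at most $Nn$. By the remark after Theorem \ref{thmC}, $\mathcal{C}$ is a Borel set. By Theorem \ref{thmC}, it is pairwise nested, and by Corollary \ref{cor1}, $\tau(\mathcal{C})$ generates $\mathrm{H}_1(\Sigma',\mathbb{Z}_2)$. Form the Borel polygonal complex $\Omega$ by attaching a face along each $\tau(C)$ with $C\in\mathcal{C}$; it is still a Borel complex on the same vertex set $V\Sigma'$ with $\mathcal{R}_\Omega=\mathcal{R}_{\Sigma'}$.

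The separations $\{A'_{C,1},A'_{C,2}\}_{C\in\mathcal{C}}$ have adhesions of size at most $2Nn$. By Lemma \ref{lem20}, they form a separation system, so we obtain a Borel tree decomposition $(T,\{V_t\})$ of $\Omega$. Borelness follows because $\mathcal{C}$ is Borel and the map $C\mapsto\{A'_{C,1},A'_{C,2}\}$ is Borel. By Theorem \ref{thmJ}, it is enough that $\mathcal{Q}_T$ is measure-treeable. This relation is generated by the Borel polygonal complex on $Y_T$ whose components are the pieces $\Omega[V_t]$. Since each adhesion is a cycle $\tau(C)$ carrying an attached face, taking torsos adds nothing essential; we use the complexes $\Omega[V_t]$ directly rather than torso graphs.

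By Lemma \ref{lem9}, each $\Omega[V_t]$ is connected and planar. The remaining claim is $\mathrm{H}_1(\Omega[V_t],\mathbb{Z}_2)=0$, after which Theorem \ref{fact1} applied to the Borel planar polygonal complex $\bigsqcup_t\Omega[V_t]$ on $Y_T$ finishes the proof. For this we use the van Kampen splitting $\pi_1(\Omega)=\bigast_t\pi_1(\Omega[V_t])$, which holds because the adhesion pieces $\Omega[V_s\cap V_t]$ are disks. This gives the homology splitting $\mathrm{H}_1(\Omega,\mathbb{Z}_2)=\bigoplus_t \mathrm{H}_1(\Omega[V_t],\mathbb{Z}_2)$. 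Since $\tau(\mathcal{C})$ generates $\mathrm{H}_1(\Sigma',\mathbb{Z}_2)$ and these classes are killed in $\Omega$, we get $\mathrm{H}_1(\Omega,\mathbb{Z}_2)=0$, hence each summand vanishes.

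The main obstacle is the Borel/measurable bookkeeping. First, one must check that the decomposition is a Borel tree decomposition in the sense of Section \ref{sec3.1}, so that $Y_T$ is standard and the piece complex on it is Borel. Second, the piecewise complexes must remain planar and locally finite; local finiteness needs bounded degree and bounded track size, since each vertex lies on only finitely many short tracks. I expect these to follow exactly as in \cite[Proposition 3.1]{J}.
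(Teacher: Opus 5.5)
Your proposal is correct and follows essentially the same route as the paper: pass to the barycentric subdivision, attach faces along $\tau(C)$ for the irreducible tracks of size at most $Nn$, use the associated tree decomposition to reduce via Theorem \ref{thmJ} to the pieces $\Omega[V_t]$, and apply Theorem \ref{fact1} to them. Your van Kampen splitting argument for $\mathrm{H}_1(\Omega[V_t],\mathbb{Z}_2)=0$ justifies a step that the paper's proof only asserts.
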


\begin{proof}
Let $(X',\Sigma')$ be the barycentric subdivision of $(X,\Sigma)$. It suffice to show that $(X',\mathcal{R}_{\Sigma'})$ is measure-treeable. Suppose that $\Sigma^1$ has degrees bounded by $N$, and $\mathrm{H}_1(\Sigma,\mathbb{Z}_2)$ is generated by a family of cycles on $\Sigma$ of size at most $n$. Let $\mathcal{C}$ be the set of irreducible tracks on $\Sigma$ of size at most $Nn$. Let $\Omega$ be a Borel polygonal complex on $X'$ obtained by attaching faces to $\Sigma'$ along $\tau(C)$ for all $C\in \mathcal{C}$. Since $\mathrm{H}_1(\Sigma',\mathbb{Z}_2)$ is generated by $\tau(\mathcal{C})$ by Corollary \ref{cor1}, we have $\mathrm{H}_1(\Omega,\mathbb{Z}_2)=0$. Let $(T,\{V_t\}_{t\in VT})$ be the tree decomposition of $\Omega$ associated with $\mathcal{C}$. For every $t\in VT$, the complex $\Omega[V_t]$ is planar by Lemma \ref{lem9}, and satisfies $\mathrm{H}_1(\Omega[V_t],\mathbb{Z}_2)=0$.

Let $(Y_T,\mathcal{Q}_T)$ be the equivalence relation associated with the tree decomposition $T$ as in Section \ref{sec3.1}.
Then we can take a planar Borel polygonal complex $\Omega_T$ on $Y_T$ so that $\mathcal{R}_{\Omega_T}=\mathcal{Q}_T$ and the projection map $(x,t)\in Y_T\mapsto x\in X'$ induces the isomorphism $\Omega_T[[(x,t)]_\mathcal{Q}]\to \Omega[V_t]$ for every $(x,t)\in Y_T$. Hence we have $\mathrm{H}_1(\Omega_T,\mathbb{Z}_2)=0$.
Now $(Y_T,\mathcal{Q}_T)$ is measure-treeable by the previous theorem, and so is $(X',\mathcal{R}_{\Sigma'})$ by Theorem \ref{thmJ}.
\end{proof}

\subsection{Reduction to from Borel graphs to Borel complexes}
Fix an orientation of on the sphere $\mathbb{S}$. For a embedding of a finite star into $\mathbb{S}$, the cyclic order on the set of end vertices with respect to the center $o$ of the star is defined. We call this \textit{the clockwise order centered at} $o$ with respect to this embedding.
For a connected planar graph $G$, a \textit{spherical rotation}
for $G$ is a family $\{\omega(x)\}_{x\in VG}$ such that there exists an embedding $i:G\to\mathbb{S}$ such that $\omega(x)$ is a clockwise order on $\mathrm{N}_G(x)$ centered at $x$ with respect to $i$ for every $x\in VG$.

\begin{dfn}[{\cite[Definition 5.3]{J}}]
Let $(X,G)$ be a planar Borel graph. A \textit{Borel rotation system} of $G$ is a family $\{\omega(x)\}_{x\in X}$ such that
\begin{enumerate}
    \item for every component $G_0$ of $G$, the family $\{\omega(x)\}_{x\in VG_0}$ is a spherical rotation for $G_0$
    \item $(x,y,z,w)\in\{(x,y,z,w)\mid x\in X,\ y,z,w\in\mathrm{N}_G(x) \}\mapsto 1_{\omega(x)}(y,z,w)$ is a Borel map.
\end{enumerate}
\end{dfn}

\begin{lem}[{\cite[Proposition 5.5]{J}}]\label{lem11}
Let $(X,G)$ be a planar Borel graph with $3$-connected components. Then there exist a planar Borel graph $(Y,H)$ with a Borel rotation system, and a $2$-to-$1$ Borel map $\pi:Y\to X$ such that $\pi:(Y,H)\to (X,G)$ is a component-wise isomorphism.
\end{lem}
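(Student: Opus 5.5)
The statement is Lemma \ref{lem11}. The approach is to build a rotation system on a $2$-to-$1$ cover $Y$ of $X$ that records, at each point, a choice of one of the two mirror-image embeddings of its component.

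\textbf{The key fact (Whitney).} Every $3$-connected planar graph $G_0$ has an essentially unique embedding into $\mathbb{S}$. Up to homeomorphism of the sphere, its spherical rotations are exactly two: $\omega_0$ and its reverse $\omega_0^{-1}$, corresponding to the two orientations.

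\textbf{Step 1: the set of spherical rotations.} For each component $G_0$ of $G$, the set $\mathrm{Rot}(G_0)$ of spherical rotations has exactly two elements. Define
\begin{equation*}
Y=\{(x,\omega)\mid x\in X,\ \omega\in\mathrm{Rot}(G_x)\}.
\end{equation*}
Each $\omega$ is determined by its values on the finite neighbourhood of each vertex. So we encode $(x,\omega)$ by $x$ together with the local rotation $\omega(x)$, a cyclic order on $\mathrm{N}_G(x)$. By Whitney's theorem, $\omega(x)$ determines $\omega$ on all of $G_x$, and the two global rotations differ already at $x$ whenever $\deg x\ge 3$, which holds in a $3$-connected graph. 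Hence $Y$ is naturally a subset of the standard Borel space of pairs (vertex, cyclic order on its neighbours), and $\pi(x,\omega)=x$ is $2$-to-$1$.

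\textbf{Step 2: Borelness of $Y$.} We must show that "$\omega(x)$ extends to a spherical rotation of $G_x$" is a Borel condition. Spherical rotations are characterized locally: a rotation system of a finite or infinite connected graph is spherical iff the face-tracing determined by it satisfies Euler-type conditions on finite subgraphs, equivalently iff every finite subgraph with the induced rotation is spherical. Using $3$-connectedness, we take the unique facial-walk structure: the faces are the induced non-separating cycles (Tutte). A local rotation at $x$ is valid iff consecutive neighbours $y,z$ in it lie on a common induced non-separating cycle through $x$. This is the hard part: non-separation in an infinite graph quantifies over the whole component. However, it is a countable conjunction over finite balls of conditions, so it is still Borel. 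Concretely, $(x,c)\in Y$ iff for every $r$ there is a spherical rotation of the ball $B_r(x)$ compatible with $c$ and with the unique extension. Each condition is a finite check on a Borel-coded ball, so $Y$ is Borel. Consistency across $r$ follows from compactness (König's lemma): the finitely many local options form an inverse system.

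\textbf{Step 3: the graph $H$ and its rotation system.} Connect $(x,\omega)$ to $(y,\omega')$ iff $\{x,y\}\in EG$ and $\omega'$ is the global rotation determined by $\omega$, i.e.\ the same orientation propagated along the edge. Propagation from $\omega(x)$ to $\omega(y)$ is determined locally by Whitney uniqueness applied to balls, so $H$ is Borel. Each component of $H$ maps isomorphically onto a component of $G$ with a fixed global orientation, and setting $\omega_H(x,\omega)=\omega(x)$, transported via $\pi$, gives a Borel rotation system.

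I expect the main obstacle to be Step 2/3: showing that the unique extension and its propagation are determined by bounded-radius (or countably many finite) data. I would try first to justify this via the Borel characterization of faces of $3$-connected planar graphs as induced non-separating cycles, tested ball by ball.
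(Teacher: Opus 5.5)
The paper gives no proof of this lemma; it quotes \cite[Proposition 5.5]{J}. So your argument has to stand on its own. Your architecture is right: lift each vertex to pairs (vertex, one of the two admissible local rotations), and join compatible pairs. But the step you yourself identify as the crux rests on a claim that is false for infinite components.

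\textbf{The gap.} Tutte's characterization ``the faces are the induced non-separating cycles'' only accounts for finite facial walks. In an infinite $3$-connected planar graph, facial walks can be bi-infinite; the paper's own proof of Lemma \ref{lem12} notes that $\Phi$-orbits may be bi-infinite lines. Here is a concrete counterexample.
\begin{itemize}
\item Let $G$ have vertex set $\mathbb{Z}\times\{0,1,2\}$ and edges $\{(m,i),(m+1,i)\}$, $\{(m,0),(m,1)\}$ and $\{(m,1),(m,2)\}$.
\item $G$ is planar. It is also $3$-connected: removing two vertices leaves some row intact, and every remaining vertex still reaches that row.
\item Take $x=(n,2)$, which has degree $3$. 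The neighbours $(n-1,2)$ and $(n+1,2)$ are consecutive in \emph{both} cyclic orders at $x$.
\item No induced non-separating cycle contains the path $(n-1,2),(n,2),(n+1,2)$. Such a cycle must meet column $n$ again. If it does so at $(n,1)$, then $\{(n,2),(n,1)\}$ is a chord. If it does so at $(n,0)$ while avoiding $(n,1)$, it encloses $(n,1)$ and separates it from distant vertices.
\end{itemize}
So your criterion accepts no rotation at $x$, and $\pi$ fails to be $2$-to-$1$.

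Two further problems:
\begin{itemize}
\item ``Whitney uniqueness applied to balls'' in Step 3 is unavailable. A ball $B_r(x)$ is typically not $3$-connected and has many planar rotations.
\item Whitney's theorem is for finite graphs. For infinite locally finite components you need Imrich's extension of it.
\end{itemize}

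\textbf{The repair.} Use the criterion you wrote ``concretely'', with the circular clause ``and with the unique extension'' deleted.
\begin{itemize}
\item Put $(x,c)\in Y$ iff for every $r\geq 1$ the finite graph $B_r(x)$ has a planar rotation system with value $c$ at $x$.
\item Join $(x,c)$ to $(y,c')$ iff $\{x,y\}\in EG$ and, for every $r\geq 2$, some planar rotation system of $B_r(x)$ has value $c$ at $x$ and $c'$ at $y$.
\end{itemize}
Both conditions are countable intersections of finite checks, hence Borel. Necessity is clear by restricting a spherical rotation.

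Sufficiency is where the real work lies. König's lemma gives a rotation system $\omega$ of $G_x$ with $\omega(x)=c$ (and $\omega(y)=c'$), all of whose finite restrictions are planar. You then need two facts:
\begin{enumerate}
\item[(a)] Such an $\omega$ is spherical. Glue discs to the finite facial walks and half-planes to the bi-infinite ones. Every compact part of the resulting surface embeds in a sphere, so the surface has genus $0$ and embeds in $\mathbb{S}$.
\item[(b)] The Whitney--Imrich uniqueness. This gives $c=\omega_0(x)^{\pm1}$, and shows $c'$ is the value at $y$ of the same global rotation.
\end{enumerate}
With (a) and (b) made explicit, the rest of your Step 3 goes through.
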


\begin{lem}\label{lem12}
Let $(X,G)$ be a planar Borel graph with $3$-connected components. Suppose that $G$ admits a Borel rotation system $\{\omega(x)\}_{x\in X}$. Then there exists a planar Borel complex $(Y,\Sigma)$ without singular vertices such that $(X,\mathcal{R}_G)$ is embedded into $(Y,\mathcal{R}_\Sigma)$.
\end{lem}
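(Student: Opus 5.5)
The plan is to build $\Sigma$ by attaching a face to every facial cycle of $G$ and then subdividing barycentrically, so that vertex links become genuine disks. Since every component $G_0$ of $G$ is $3$-connected and planar, it has an essentially unique embedding into $\mathbb{S}$ (Whitney), and the given Borel rotation system $\{\omega(x)\}$ realizes it. The faces of this embedding are then described locally: starting from an oriented edge $(x,y)$, the next edge is $(y,z)$, where $z$ is the successor of $x$ in $\omega(y)$. Iterating traces a closed walk, the facial walk. In a $3$-connected planar graph each facial walk is a simple cycle, and any two distinct faces meet in a single vertex, a single edge, or nothing. Some components may be one-ended or infinite-ended with infinite faces, so the first step is to discard the facial walks that never close up. This condition is Borel, since it amounts to asking whether the orbit of $(x,y)$ under the face-tracing map is finite.

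Next, I would define a Borel polygonal complex $\Omega$ on $X$ by attaching a $2$-cell along each finite facial cycle. Concretely, let $X_F$ be the Borel set of finite facial cycles; it is standard because each cycle is a finite subset of $X$ determined Borel-ly. Then take $Y = X \sqcup E G \sqcup X_F$ and let $\Sigma$ be the barycentric subdivision of $\Omega$. That is, $\Sigma$ has vertices for the vertices, edges and faces of $\Omega$, with simplices given by chains under inclusion. The complex $\Sigma$ is Borel and locally finite, since each edge lies in at most two faces and the degrees of $G$ are finite. It is also a genuine simplicial complex, and it is planar because it embeds in the closure of the realized faces in $\mathbb{S}$. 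Moreover the inclusion $X \hookrightarrow Y$ is a Borel embedding of $\mathcal{R}_G$ into $\mathcal{R}_\Sigma$: $G$ sits inside $\Sigma^1$ up to subdivision, and every vertex in $Y\setminus X$ is adjacent to some vertex of $X$. This argument works in the same way as the remark that barycentric subdivision induces a Borel embedding.

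The remaining, and main, step is to check that $\Sigma$ has no singular vertices. I would go through the vertex types in turn.
\begin{enumerate}
\item A vertex coming from a face of $\Omega$ is the center of a subdivided polygon, so its link is a cycle and its star is a disk.
\item A vertex coming from an edge $e$ of $G$ lies in one or two attached faces, so its closed star is a disk. If no face is attached to $e$, its link is just two points, which is singular. In that case I would instead attach a degenerate ``half'' face, for example by adding a thin triangle, or else argue via a doubling trick.
\item A vertex $x \in X$ is handled through the rotation $\omega(x)$. The neighbours of $x$ are cyclically ordered, and consecutive neighbours $y,z$ bound the corner of a unique face. So the link of $x$ in $\Sigma$ is a cycle when all corner faces are finite, and an arc or a union of arcs otherwise.
\end{enumerate}
A union of several arcs is singular, so the obstacle is at vertices of $X$ that are incident to two or more infinite faces. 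To fix this, I would attach extra structure to the infinite faces in a Borel way. For each oriented edge on an infinite facial walk, add a new vertex $v_{(x,y)}$ together with triangles $\{x,y,v_{(x,y)}\}$ and edges $\{v_{(x,y)},v_{(y,z)}\}$ along the walk, plus triangles $\{y,v_{(x,y)},v_{(y,z)}\}$. This fills each infinite face with a half-infinite collar. After the collar is added, every corner at $x$ is filled, and each vertex link becomes either a cycle or a single arc, with the new vertices lying on the boundary.

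This construction is canonical and hence Borel. Planarity is preserved, since the collar lies inside the face, and the embedding property of $\mathcal{R}_G$ is unaffected. Once the collar is in place, the cases above give disk neighbourhoods everywhere, so $\Sigma$ has no singular vertices, which completes the lemma.
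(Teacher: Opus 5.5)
Your argument is correct, but it mixes two constructions where the paper uses only one.

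\textbf{The paper's route.} The paper never fills faces. It adds one vertex $z_e$ for every oriented edge $e\in\overrightarrow{E}G$. Along \emph{every} $\Phi$-orbit it then attaches exactly the collar you describe: the triangles $\{x,y,z_{(x,y)}\}$, plus the corner triangles joining $z_e$, $z_{\Phi(e)}$ and their common vertex of $X$. So finite faces receive annuli and infinite faces receive bands. Each component becomes a thickening of $G_0$ inside $\mathbb{S}$: a vertex $x\in X$ has link a cycle of length $3\deg_G(x)$, and the $z_e$ lie on the boundary with arc links.

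\textbf{What the uniform treatment buys.}
\begin{itemize}
\item There is no need to separate finite from infinite orbits.
\item There is no need to check that finite facial walks are simple cycles bounding empty discs of the embedding. Your disc-filling step needs this for planarity, and you only assert it.
\item No barycentric subdivision is needed, since the result is already simplicial.
\item Degrees stay bounded by $3\max\deg_G$ (the $z_e$ have degree $4$). This is convenient for the later application of Theorem~\ref{thm2}, which requires bounded degree.
\end{itemize}
In your version, a face-centre vertex has degree twice the face length, which can be unbounded even when $G$ has bounded degree. What you gain in exchange is that finite facial cycles become null-homologous, and that is not needed for this lemma.

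\textbf{Two bookkeeping points.} First, the ``thin triangle or doubling'' fix in your case (ii) becomes superfluous once the collar is added. Second, the collar must be attached to $\Omega$ before you subdivide, because the triangle $\{x,y,v_{(x,y)}\}$ uses the unsubdivided edge $\{x,y\}$.
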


\begin{proof}
We define a Borel map $\Phi:\overrightarrow{E}G\to \overrightarrow{E}G$. For $e=(x,y)\in \overrightarrow{E}G$, let $\Phi(e)=(y,z)$ where $z\in\mathrm{N}_G(y)\setminus\{x\}$ is the unique element such that there is no $w\in\mathrm{N}_G(y)$ satisfying $(z,w,x)\in \omega(y)$. Note that $G$ has no vertex of degree less than $2$ since $G$ is $3$-connected, and thus $\Phi$ is well-defined and invertible. For each component $G_0$ of $G$ and an edge $e\in E\Sigma_0$, the path $\cdots\Phi^{-1}(e)\cdot e\cdot \Phi(e)\cdots$ goes in the most clockwise direction with respect to an embedding $i:\Sigma_0\to\mathbb{S}$. Thus the $\Phi$-orbit of $e$ is either a simple cycle or a bi-infinite line, which is sent by $i$ to the boundary of a component of $\mathbb{S}\setminus i(G_0)$.

Let $Z$ be a standard Borel space with a Borel bijection $\Psi:Z\to  \overrightarrow{E}G$ and set $Y=X\sqcup Z$. Now let $\Sigma$ be the smallest complex on $Y$ containing the set
\begin{align*}
    &\{\{x,y,z\}\subset Y\mid z\in Z,\ \Psi(z)=(x,y)\}\cup\\
    &\{\{x,y,z\}\subset Y\mid y,z\in Z,\  x=s(\Psi(y))=t(\Psi(z)),\ \Phi(\Psi(y))=\Psi(z)\}.
\end{align*}
We show that $\Sigma$ is a planar complex. Let $G_0$ be a component of $G$ and take an embedding $i:G_0\to\mathbb{S}$ as above. To every finite $\Phi$-orbit on $G_0$, we attach an annulus $[0,1]\times \mathbb{T}$ along $\{0\}\times\mathbb{T}$, and to every infinite $\Phi$-orbit on $G_0$, we attach a bi-infinite band $[0,1]\times \mathbb{R}$ along $\{0\}\times\mathbb{R}$. Then we obtain the topological realization of a component $\Sigma_0$ of $\Sigma$. By the property of $\Phi$-orbits, we can extend $i$ to an embedding of $\Sigma_0\to\mathbb{S}$. Hence $\Sigma$ is planar.

Moreover, $\Sigma$ has no singular vertex since in this topological realization, an open neighborhood of every vertices in $X$ is homeomorphic to $\mathbb{R}$, and an open neighborhood of every vertices in $Z$ is homeomorphic to $\mathbb{R}\times [0,\infty)$. Finally, it is clear that the inclusion map $(X,\mathcal{R}_G)\hookrightarrow (Y,\mathcal{R}_\Sigma)$ is an Borel embedding.
\end{proof}

\begin{rmk}
In fact, by an argument similar to the proof of \cite[Corollary 5.7]{J}, Lemma \ref{lem12} holds without assuming the existence of a Borel rotation system. We do not need this for our purpose.
\end{rmk}

\subsection{Proof of Theorems}
Finally, we prove Theorems \ref{thm2'} and \ref{thm3}.

\begin{thm}\label{thm2}
Let $(X,\mu,\Sigma)$ be a pmp bounded-degree planar complex without singular vertices. Then there exist a sequence of pmp planar complexes $(X_n,\mu_n,\Sigma_n)$ such that $\mathcal{R}_{\Sigma_n}$ is $\mu_n$-treeable for every $n$, and a sequence of covering factor maps $\pi_n:(X_n,\mu_n,\Sigma_n)\to (X,\mu,\Sigma)$ which is an asymptotic extension. In particular, $\mathcal{R}_{\Sigma}$ is $\mu$-sofic.
\end{thm}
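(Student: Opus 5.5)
Fix $n$ and let $\mathcal{C}_n$ be the set of irreducible tracks on $\Sigma$ of size at most $n$, taken componentwise; by the remark after Theorem \ref{thmC} this is a Borel subset of $\mathrm{Cut}(\Sigma^1)$. By Theorem \ref{thmC} it is pairwise-nested, and its sizes are bounded by $n$. Work on the barycentric subdivision $(X',\mu',\Sigma')$, normalised to a pmp complex. Since $X\hookrightarrow X'$ is an embedding, it suffices to prove soficity there, and the asymptotic-extension statement transfers.

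For each component, form $\Omega_n$ by attaching faces along $\tau(C)$, $C\in\mathcal{C}_n$, and take the covering $\widetilde{\Sigma}'_n$ of $\Sigma'$ associated with $\mathcal{C}_n$. To realise this Borel-measurably, use Example \ref{exm2}. Let $\Pi_1(\Omega_n)$ be the fundamental groupoid of the Borel polygonal complex $\Omega_n$ on $X'$, and take a principal extension $\rho_n:(Y_n,\nu_n,\widetilde{\mathcal{R}}_n)\to \Pi_1(\Omega_n)$. Lift edges as in Example \ref{exm2} to get a pmp complex $\widetilde{\Omega}_n$ on $Y_n$ whose components are universal covers of the components of $\Omega_n$. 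Then delete, for each edge of the lifted tree $\widetilde{T}$, the unique lifted face over the adhesion. This gives $\Sigma_n:=\widetilde{\Sigma}'_n$ with a covering factor map $\pi_n:(Y_n,\nu_n,\Sigma_n)\to(X',\mu',\Sigma')$. Borelness of the deleted set should follow because it is determined by $\mathcal{C}_n$ and the lifting data.

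By Proposition \ref{prop1}, each component of $\Sigma_n$ is planar and $\mathrm{H}_1(\Sigma_n,\mathbb{Z}_2)$ is generated by cycles of size at most $2n$. Degrees are bounded, since a covering preserves local degrees of $\Sigma'$, which are bounded in terms of the degree bound of $\Sigma$. $\Sigma_n$ has no singular vertices because it covers $\Sigma'$, which has none. Theorem \ref{fact2} then gives that $\mathcal{R}_{\Sigma_n}$ is measure-treeable, hence $\nu_n$-treeable. By Proposition \ref{prop2} it is sofic.

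The main step is the asymptotic extension. Take $\psi^1,\dots,\psi^k\in[[\Sigma']]$. For $x$ in the domain of $\psi^1\cdots\psi^k$, the edges traversed form a closed or non-closed $\Sigma'$-path $p_x$ of length $k$. The lift is fixed at $y\in\pi_n^{-1}(x)$ exactly when $p_x$ is closed and its class lies in the subgroup $\pi_1(\Sigma_n)\le\pi_1(\Sigma')$. This condition is independent of the fibre point because the covering is normal, the subgroup being the kernel of $\pi_1(\Sigma')\to\pi_1(\Omega_n)$. Moreover $\tau_\nu(\psi^1\cdots\psi^k)$ is the measure of those $x$ with $x=\psi^1\cdots\psi^k x$, i.e. $p_x$ closed.

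So it suffices to show that, for each fixed closed path $p$ of length $k$, $p$ is null-homotopic in $\Omega_n$ for all $n\ge n_0(k)$, with $n_0$ depending only on $k$ and the degree bound $N$. Then the two traces agree for $n\ge n_0$, and convergence is even eventual equality.

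To get this, convert $p$ through Lemma \ref{lem19} into $e'qe'^{-1}$ with $q$ a closed $\widehat{\Sigma}$-path of length at most $N'k$. This step must be carried out in $\Sigma'$; paths in $\Sigma'$ first need to be pushed to $\Sigma$-paths of comparable length, which is routine. Proposition \ref{lem3} makes $q$ contractible in $\Omega_n$ once $n\ge N'k$. The main obstacle is making this homotopy bookkeeping precise on $\Sigma'$ versus $\Sigma$ and $\widehat{\Sigma}$, together with verifying measurability of $\mathcal{C}_n$ and of the face deletion. Soficity of $\mathcal{R}_\Sigma$ then follows from Lemma \ref{lem1}.
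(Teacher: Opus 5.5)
Your proposal is correct and follows essentially the same route as the paper. The paper likewise builds the covering of $\Sigma'$ from a principal extension of $\Pi_1(\Omega_n)$, gets treeability from Proposition \ref{prop1} and Theorem \ref{fact2}, obtains eventual equality of traces because closed paths of bounded length become null-homotopic in $\Omega_n$ (Lemma \ref{lem19} and Proposition \ref{lem3}), and concludes with Lemma \ref{lem1}. The differences are cosmetic: the paper lifts only the edges and triangles of $\Sigma'$, so no polygonal faces ever need deleting and your Borelness worry about the deleted set disappears; and at the end it passes from the covering of $\Sigma'$ to the complex $\Sigma_n$ whose barycentric subdivision it is, which gives covering factor maps onto $\Sigma$ itself, as the statement requires.
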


\begin{proof}
Fix an integer $n\geq 3$. Let $\mathcal{C}_n$ be the family of irreducible tracks on $\Sigma$ of size at most $n$. Let $(X',\Sigma')$ be the barycentric subdivision of $(X,\Sigma)$, and we extend $\mu$ to the finite measure on $X'$ so that $\mathcal{R}_{\Sigma'}$ is measure-preserving. Then let $(X',\Omega_n)$ be the Borel polygonal complex defined by attaching to $\Sigma'$ a face along $\tau(C)$ for every $C\in \mathcal{C}_n$. Let $\Pi_1(\Omega_n)$ be the fundamental groupoid of $\Omega_n$ and $\varepsilon_n: \Pi_1(\Omega_n)\to \mathcal{R}_{\Sigma'},\ g\mapsto (s(g),t(g))$ the canonical projection (Example \ref{exm2}).

Now take a principal extension $\rho_n:(X'_n,{\mu_n},\mathcal{R}_n)\to (X',{\mu},\Pi_1(\Omega_n))$, and set $\pi_n=\varepsilon_n\circ\rho_n$. Let $\Sigma_n'$ be the Borel complex on $X_n'$ such that $\mathcal{R}_{\Sigma_n}=\mathcal{R}_n$ and $\pi_n:(X_n',\mu_n,\Sigma_n')\to (X',\mu,\Sigma')$ is a covering factor map.
By construction, each component of $\Sigma_n'$ is the covering of a component of $\Sigma'$ associated with the restriction of $\mathcal{C}_n$ on this component. Hence $\Sigma_n'$ is a measure-preserving bounded-degree planar complex without singular vertices, and $\mathrm{H}_1(\Sigma_n',\mathbb{Z}_2)$ is boundedly generated by Proposition \ref{prop1}. Then by Theorem \ref{fact2}, $\mathcal{R}_{\Sigma_n}$ is $\mu_n$-treeable.

We will show that the sequence $\pi_n:(X_n',\mu_n,\Sigma_n')\to (X',\mu,\Sigma')$ is an asymptotic extension. Fix $k\in\mathbb{Z}_{\geq 1}$. By Proposition \ref{lem3} and Lemma \ref{lem19}, there exists $n_0$ such that any closed $\Sigma'$-path of length at most $k$ is contractible in $\Omega_{n_0}$. Then for $n\geq n_0$ and $\varphi^1,\varphi^2,...,\varphi^k\in[[\Sigma^1]]$, the lifts $\varphi^1_n,\varphi^2_n,...,\varphi^k_n\in[[\Sigma_n^1]]$ satisfies $\tau_{\mu_n}(\varphi^1_n\cdots\varphi^k_n)=\tau_\mu(\varphi^1\cdots\varphi^k)$ since if $\varphi^1\cdots\varphi^kx=x$, then $\varphi^1_n\cdots\varphi^k_ny=y$ for every $y\in\pi_n^{-1}(x)$. This implies that $\pi_n:(X_n',\mu_n,\Sigma_n')\to (X',\mu,\Sigma')$ is an asymptotic extension.

Finally, $(X_n,\mu_n,\Sigma_n)$ be the pmp planar complex such that its barycentric subdivision is $(X_n',\mu_n,\Sigma_n')$. Then it is clear that $\mathcal{R}_{\Sigma_n}$ is $\mu_n$ treeable for every $n$, and $\pi_n|_{X_n}:(X_n,\mu_n,\Sigma_n)\to (X,\mu,\Sigma)$ is a sequence of covering factor maps which is an asymptotic extension.

In particular, $\mathcal{R}_\Sigma$ is $\mu$-sofic by Lemma \ref{lem1}.
\end{proof}

\begin{cor}\label{cor3}
Let $G$ be a pmp planar graph on $(X,\mu)$. Then $\mathcal{R}_G$ is $\mu$-sofic.
\end{cor}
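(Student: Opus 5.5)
The argument reduces a general pmp planar graph, in several steps, to the bounded-degree, singular-vertex-free complexes handled by Theorem \ref{thm2}.

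\emph{Reduction to $3$-connected components.} By Corollary \ref{cor2} there is a measure-preserving planar graph $(Y,\nu,H)$ whose components are single vertices, single edges, polygons or $3$-connected planar graphs. $\mathcal{R}_G$ is $\mu$-sofic if and only if $\mathcal{R}_H$ is $\nu$-sofic. I would then partition $Y$ into the $\mathcal{R}_H$-invariant Borel sets $Y_0$ and $Y_1$.
\begin{itemize}
\item $Y_0$ is the union of the components that are vertices, edges or polygons. On $Y_0$ the relation is either finite or generated by a bi-infinite line, hence treeable and therefore sofic by Proposition \ref{prop2}.
\item $Y_1$ is the union of the $3$-connected components.
\end{itemize}
A measure-preserving relation is sofic when its restrictions to invariant pieces are, after normalizing finite measures and using the $\sigma$-finite definition. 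So it suffices to treat $H|_{Y_1}$.

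\emph{Bounded degree.} Theorem \ref{thm2} requires bounded degree, but $\nu$ may only be $\sigma$-finite and degrees may be unbounded. By the definition of soficity for infinite measure, it suffices to handle restrictions to sets $A$ of finite measure. I would pass to a restriction or a suitable sub-object as follows:
\begin{itemize}
\item first, replace $\nu$ by its restriction to a set of finite measure, normalized;
\item second, handle unbounded degree by blowing up each vertex $x$ of degree $d$ into a finite cycle or path of $d$ new vertices. Each vertex then has degree at most $3$, planarity is preserved because the vertex is replaced by a small disc using the rotation, and the original relation embeds into the new one, with the measure given by Remark \ref{rmk1}.
\end{itemize}
Embeddings preserve soficity in the reverse direction (Lemma, part (ii)). The blow-up needs a rotation system, so I would do it after the next step.

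\emph{Rotation system and complex.} Apply Lemma \ref{lem11} to obtain $(Y',H')$ with a Borel rotation system and a $2$-to-$1$ component-wise isomorphism $Y'\to Y_1$. The measure lifts to $Y'$ as half the counting measure on fibers. The map is then an extension in the sense of the paper (injective on classes, onto), so soficity of $\mathcal{R}_{H'}$ implies soficity of $\mathcal{R}_H|_{Y_1}$. Next apply Lemma \ref{lem12} to embed $(Y',\mathcal{R}_{H'})$ into $(Z,\mathcal{R}_\Sigma)$ for a planar Borel complex $\Sigma$ without singular vertices, and give $Z$ the induced measure from Remark \ref{rmk1}. If $H'$ has bounded degree, the construction in Lemma \ref{lem12} gives bounded degree: the new vertices have degree at most $4$, and the old ones have degree $3\deg$.

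\emph{Conclusion and main obstacle.} Restricting to a finite-measure set and normalizing, Theorem \ref{thm2} gives soficity of $\mathcal{R}_\Sigma$, and we pull back through the embeddings and extensions. The main obstacle is the bounded-degree hypothesis. The vertex blow-up must be done Borel-ly and must keep both $3$-connectivity (or at least the hypotheses of Lemma \ref{lem12}) and the absence of singular vertices. My first attempt would avoid $3$-connectivity altogether: perform the blow-up on the complex level after Lemma \ref{lem12}, replacing each high-degree vertex's disc neighborhood by a subdivided triangulated disc of bounded degree. This is a Borel, component-wise planar operation, and the original relation embeds into the new one.
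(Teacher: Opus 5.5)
Your chain Corollary~\ref{cor2} $\to$ Lemma~\ref{lem11} $\to$ Lemma~\ref{lem12} $\to$ Theorem~\ref{thm2} is the paper's, and your handling of the degenerate components and of the $2$-to-$1$ extension is fine. The gap is in reaching a \emph{pmp} complex.

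Theorem~\ref{thm2} applies only to a complex whose entire vertex space carries a finite invariant measure. ``Restricting $\nu$ to a finite-measure set $A$ and normalizing'' does not produce such a complex. Neither $\mathcal{R}_H|_A$ nor $\mathcal{R}_\Sigma|_A$ is the connectedness relation of any planar complex on $A$, so none of Lemma~\ref{lem11}, Lemma~\ref{lem12} or Theorem~\ref{thm2} can be applied to it.

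This case really occurs. Only local finiteness is assumed, so $\int_X\deg_G\,d\mu$ may be infinite. Then $\nu$ in Corollary~\ref{cor2} can already be infinite. Even when $G$ is itself $3$-connected, the new vertex set $Z\cong\overrightarrow{E}H'$ of Lemma~\ref{lem12} has measure $\int_X\deg_G\,d\mu$. Your vertex blow-up cannot help: splitting a vertex of degree $d$ into $d$ vertices keeps the total mass at $\int\deg\,d\mu$. You end up with a bounded-degree complex of infinite measure, which Theorem~\ref{thm2} does not cover.

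The missing idea is the one the paper uses to handle both issues at once: soficity is preserved under increasing unions, and you should apply this to $G$ itself at the outset. Let $G_N$ consist of the edges of $G$ both of whose endpoints have degree at most $N$. Each $G_N$ is a pmp planar graph on $(X,\mu)$ of degree at most $N$. Moreover $\mathcal{R}_{G_N}$ increases to $\mathcal{R}_G$, because every path meets only finitely many vertices.

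For bounded-degree $G$, each vertex lies in boundedly many blocks and Tutte torsos. Hence the graph from Corollary~\ref{cor2}, its double cover from Lemma~\ref{lem11}, and the complex from Lemma~\ref{lem12} all have bounded degree and finite total measure. Theorem~\ref{thm2} then applies after normalizing, and no blow-up is needed. Your blow-up is harmless in itself; replacing each vertex by a cycle in rotation order even preserves $3$-connectedness, so your worry there was unfounded. But it only addresses the degree bound, not the measure.
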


\begin{proof}
By Corollary \ref{cor2}, we may assume that $(X,\mu,G)$ is a measure-preserving planar graph with $3$-connected components. Then by Lemmas \ref{lem11} and \ref{lem12}, there exist a measure-preserving planar complex $(Y,\nu,\Sigma)$ without singular vertices such that a $2$-to-$1$ extension of $(X,\mu,\mathcal{R}_G)$ is embedded into $(Y,\nu,\mathcal{R}_\Sigma)$. Hence it suffices to show that $\mathcal{R}_\Sigma$ is $\nu$-sofic. Since soficity is preserved by taking increasing unions, we may assume that $\nu(Y)<\infty$ and $\Sigma$ has bounded degrees. Then by Theorem \ref{thm2}, $\mathcal{R}_\Sigma$ is $\nu$-sofic.
\end{proof}

\section*{Acknowledgments}
I would like to thank Héctor Jardón-Sánchez for sharing many questions and ideas, which led me to do this work. I am also grateful to Antoine Poulin, Oriol Sol\'{e}-Pi and Anush Tserunyan for intensive discussions. This work was supported by JSPS KAKENHI Grant Number 23KJ0653, and the WINGS-FMSP program in the University of Tokyo.

\footnotesize
\textsc{Research Institute for Mathematical Sciences, Kyoto University, Kyoto 606-8502, Japan.}\par
\texttt{ishikura@kurims.kyoto-u.ac.jp}

\end{document}